\documentclass[12pt]{amsart}
 \usepackage{mathrsfs}
\usepackage{amssymb}
\usepackage[T1]{fontenc}
 \usepackage{amsthm, amsxtra, latexsym, mathrsfs, amscd}
 \usepackage{graphicx}
\usepackage{epsfig}
\usepackage{amsfonts}
\usepackage{amscd}
\usepackage{amsfonts}
\usepackage{color}
\usepackage{tikz}
\usepackage{psfrag}
\usepackage{pstricks}
\usepackage{mathdots}
\usepackage{psfrag}
\usepackage{amsthm,amsmath,amssymb,graphics,hyperref}
\usepackage{amsmath,amssymb,hyperref} 
\usepackage{enumerate}
\usepackage[titletoc,title]{appendix}
\input xy
\xyoption{all}
\usepackage{ulem}

\theoremstyle{plain}
\newtheorem{theorem}{Theorem}[section]

\newtheorem{proposition}[theorem]{Proposition}
\newtheorem{lemma}[theorem]{Lemma}
\newtheorem{corollary}[theorem]{Corollary}
\theoremstyle{definition}

\newtheorem{definition}[theorem]{Definition}

\newtheorem{remark}[theorem]{Remark}

\newtheorem{example}[theorem]{Example}

\newcommand{\R}{\mathbb{R}}

\newcommand{\ra}{{\rightarrow}}

\let\cal\mathcal

\def\g{\gamma}

\def\P{\mathbb{P}}

            \let\g=\gamma

\let\G=\Gamma

\begin{document}
\title [ Convex Projective Structure]
      {Thermodynamic Formalism on  SRB Measures of Compact Convex Projective manifolds, Geometry and Counting }
        \author{Patrick Foulon and Inkang Kim}

        \date{}
        \maketitle
        \tableofcontents
\begin{abstract}
We study the Hilbert geodesic flow on a closed convex real projective $n$-dimensional manifold. We use the thermodynamic formalism for this dissipative Anosov flow to obtain a geometric description of the SRB measure. We also deduce new counting properties for the induced representations of the fundamental groups in  $SL(n+1, \R)$.  
\end{abstract}
\footnotetext[1]{2020 {\sl{Mathematics Subject Classification.}}
37A35, 37D40.} \footnotetext[2]{{\sl{Key words and phrases.}}
Real projective structure, Sinai-Ruelle-Bowen measure, entropy, H\"older cocycles, periods} \footnotetext[3]{Research by Inkang Kim is partially supported by the grant RS-2026-25468457 and KIAS Individual Grant (MG031408), and he also thanks CIRM and IHP for the warm support  during his
visit.}

\section{Introduction} 

\subsection{Flat real projective structures} Let $M$ be a closed oriented $n$-dimensional manifold equipped with a flat real projective structure determined by an atlas of charts  into $P(\mathbb{R}^{n+1})$ whose changes of coordinates are  in $SL(n+1,\mathbb{R})$ whose projective action is identified with  $PGL(n+1,\mathbb{R})$. As a result, we have a holonomy representation $\rho:\pi_1(M)\to SL(n+1,\mathbb{R})$ of the fundamental group $\pi_1(M)$ of $M$, and a developing map $D:\widetilde{M}\to P(\mathbb{R}^{n+1})$ from the universal cover $\widetilde{M}$ of $M$ to the projective space $P(\mathbb{R}^{n+1})$. 

The image $\Omega=D(\widetilde{M})$ of a developing map is an open subset of $P(\mathbb{R}^{n+1})$ which is divisible by the torsion free discrete subgroup $\Gamma=\rho(\pi_1(M))$ of $SL(n+1,\mathbb{R})$ such that the quotient $\Omega/\Gamma\simeq M$ is compact. Note that the natural action of $\Gamma$ preserve the cone $\mathcal{C}\subset\mathbb{R}^{n+1}$ spanned by $\Omega$ and the boundary $\partial\Omega\subset P(\mathbb{R}^{n+1})$ of $\Omega$. 

We say that a flat real projective structure of $M$ is convex whenever  $\Omega$ is convex. Furthermore, a flat real projective structure of $M$ is properly convex (resp. strictly convex) whenever $\Omega$ is a convex (resp. strictly convex) subset of a compact convex set included in an affine chart of $P(\mathbb{R}^{n+1})$. 

\subsection{Hilbert metric} The Hilbert metric of a properly convex open subset $\Omega\subset P(\mathbb{R}^{n+1})$  can be defined in terms of the cross-ratio as follows. Given distinct points $a, b\in\Omega$, the oriented line $\ell$ from $a$ to $b$ intersects $\partial\Omega$ at two points $a^-, b^+\in\partial\Omega$ labeled in such a way that $a^-$, $a$, $b$ and $b^+$ appear in this order on $\ell$. The Hilbert distance $d_{\Omega}(a,b)$ between $a$ and $b$ is 
$$d_{\Omega}(a,b) = \frac{1}{2}\log\left(\frac{|b^+-a|}{|b^+-b|}\cdot\frac{|b-a^-|}{|a-a^-|}\right),$$ 
where $|.|$ is a Euclidean norm in an affine chart containing $\Omega$.  

The intersections of $\Omega$ with projective lines are geodesics of the Hilbert metric $d_{\Omega}$. Moreover, these straight line segments account for all geodesics of $d_{\Omega}$ whenever $\Omega$ is strictly convex: in particular, there is a unique geodesic of the Hilbert metric passing through a given pair of distinct points of $\Omega$ when $\Omega$ is strictly convex. 

\begin{remark} If $\Omega\subset P(\mathbb{R}^{n+1})$ is  an ellipsoid, then its Hilbert metric coincides with the usual hyperbolic metric.   
\end{remark} 

The Hilbert metric $d_{\Omega}$ is invariant under the projective action of the elements $g\in SL(n+1,\mathbb{R})$ preserving the cone $\mathcal{C}$ spanned by $\Omega$. In particular, a closed, strictly convex, flat, real projective manifold $M$ has a Hilbert metric. 

In general, the Hilbert metric $d_{\Omega}$ is  Finslerian but not Riemannian. More concretely, Benoist \cite{B1} showed that a properly convex open $\Omega\subset P(\mathbb{R}^{n+1})$ which is divisible by a torsion free discrete subgroup $\Gamma\subset SL(n+1,\mathbb{R})$ is strictly convex if and only if $\partial\Omega$ is $C^{1+\kappa}$ for some $\kappa > 0$. In particular, the Hilbert metric $d_{\Omega}$ of a strictly convex, open, divisible subset $\Omega\subset P(\mathbb{R}^{n+1})$ comes from the Finsler structure associated with the continuous family of norms 
$$\|v\|_x = \frac{1}{2} \left(\frac{1}{|x^+-x|} + \frac{1}{|x-x^-|}\right) |v|$$ 
on the tangent spaces $T_x\Omega$ to $x\in\Omega$, where $|.|$ is the Euclidean norm and $x^-$ and $x^+$ are the intersection points of $\partial \Omega$ with the line passing through $x$ in the direction $v$. Again, this Finsler norm is $\Gamma$ invariant and induces a Finsler norm on the quotient manifold. If $\Omega$ is not an ellipsoid, then $\partial \Omega$ is not $C^2$  and the Finsler structure above is not Riemannian. 

If $\Omega$ is a strictly convex open subset of $P(\mathbb{R}^{n+1})$, then the geodesics of its Hilbert metric $d_{\Omega}$ define a $C^{\infty}$ one-dimensional foliation. Since this foliation is invariant under the action of any element of $SL(n+1,\mathbb{R})$ preserving $\Omega$, we see that a strictly convex, flat, real projective structure on a closed manifold $M$ leads to a natural 
one-dimensional foliation of $HM= (T\Omega\setminus\{0\})/\mathbb{R}_+^*$ that we may parametrize by the Hilbert distance to get the geodesic flow which is time-reversible.


\subsection{Anosov flow}
It is proved by Benoist \cite{B1},  that this foliation consists of the orbits of a topologically mixing Anosov flow $(\varphi_t)_{t\in\mathbb{R}}$, namely, the geodesic flow of the Hilbert metric of $M$. In the same paper,  Benoist proves  that  the boundary $\partial\Omega$, is, $C^{1+\kappa}$ for some $\kappa>0$ and this (Anosov) geodesic flow has the same regularity.  There exists a Lebesgue absolutely continuous $\varphi_t$-invariant measure if and only if the Hilbert metric is Riemannian. 
 The image of the representation  $ \rho ( \pi_1(M)) $ is Zariski dense in $PSL(n +1; \R)$, except for the Riemannian case. \\

\subsection{Lyapunov exponents and geometry of the boundary}\label{ss.Crampon} Let $M$ be a closed manifold with a flat real projective structure induced by a strictly convex open subset $\Omega\subset P(\mathbb{R}^{n+1})$ which is divisible by $\Gamma\subset SL(n+1,\mathbb{R})$. The geodesic flow $\varphi_t$ with respect to Hilbert metric on the homogenous bundle $HM = (TM\setminus\{0\})/\mathbb{R}_+^*$ is Anosov, i.e., its differential preserves a decomposition 
$$THM = E^s\oplus \mathbb{R}\cdot X\oplus E^u$$ 
where the vectors in $E^s$ are uniformly contracted in the future, $X$ is the generator of the geodesic flow, and the vectors in $E^u$ are uniformly contracted in the past. In general, this decomposition can be further refined at Oseledets regular points $(x,v)\in HM$: by definition, this means that we can find Lyapunov exponents $\chi_p^u(x,v)> \dots >\chi_1^u(x,v)>0 > \chi_1^s(x,v) > \dots > \chi_q^s(x,v)$ and two decompositions $E^s = E^s_1(x,v)\oplus \dots\oplus E^s_q(x,v)$ and $E^u = E^u_1(x,v)\oplus \dots\oplus E^u_p(x,v)$ in Oseledets subspaces such that 
$$\lim\limits_{t\to\pm\infty}\frac{1}{t}\log\|D\varphi_t(x,v) Z_k^u\|_{\varphi_t(x,v)} = \chi_k^u(x,v) > 0$$ 
and 
$$\lim\limits_{t\to\pm\infty}\frac{1}{t}\log\|D\varphi_t(x,v) Z_l^s\|_{\varphi_t(x,v)} = \chi_l^s(x,v) < 0 $$ 
for all $Z_k^u\in E_k^u(x,v)\setminus\{0\}$, $Z_l^s\in E_l^s(x,v)\setminus\{0\}$, $1\leq k\leq p$, $1\leq l\leq q$. By Oseledets theorem, the set of Oseledets regular points has full measure with respect to any $(\varphi_t)_{t\in\mathbb{R}}$-invariant probability measure. 

Interestingly enough, Crampon \cite{cra1} proved that the positive Lyapunov exponents of Oseledets regular points are related to the infinitesimal geometry of $\partial\Omega$. More precisely, let $0< \chi_1^+(x,v)<\dots<\chi_p^+(x,v)$ be  the positive Lyapunov exponents of the Oseledets regular point $(x,v)\in THM$ and $x^+\in\partial\Omega$ be the intersection between $\partial\Omega$ and the oriented line determined any lift of $(x,v)$ to $T\Omega$. From  \cite{cra1} there exists a decomposition $T_{x^+}\partial\Omega=F_1\oplus\dots\oplus F_p$  such that the germ of function $f:(T_{x^+}\Omega,0)\to (\mathbb{R},0)$ whose graph $\{u+f(u)n(x^+)\}$ (where $n(x^+)$ is a normal vector to $\partial\Omega$ at $x^+$) describes $\partial\Omega$ nearby $x^+$ has the following property:  
$$\alpha((x,v),w_i):=\lim\limits_{h\to 0}\frac{\log((f(h w_i)+f(-h w_i))/2)}{\log|h|} = 2/\chi_i^+(x,v)$$ 
for all $w_i\in F_i\setminus\{0\}$, $1\leq i\leq p$. In other terms, the infinitesimal bending of $\partial\Omega$ at $x^+$ in different directions is dictated by the list of positive Lyapunov exponents along any geodesic ray converging to $x^+$ in the future.

\subsection{Lyapunov exponents along periodic orbits}
Periodic orbits of the Hilbert flow were studied by Benoist \cite{B1} and their Lyapunov exponents were computed by Crampon \cite{crampon}. More concretely, consider a closed orbit $\gamma_g$ of the Hilbert geodesic flow associated to a group element $g\in \Gamma$ and denote by $\ell(g)$ the length of $\gamma_g$ (with respect to the Hilbert metric). After Benoist, the element $g$ is bi-proximal.
 
\begin{proposition} \label{pro:exp-periodic-orbit} Let  $\g\in \Gamma$. The eigenvalues of the matrix associated to $ \g$ have moduli (possibly repeated with multiplicity) of the form 
  $ \lambda_1 > \dots\geq \lambda_i \geq \dots > \lambda_{n+1}$. Moreover, if $E_i$ denotes the generalised eigenspace associated to eigenvalues with the same modulus of $\lambda_i$, then
the positive Lyapunov exponents  $\chi^+_i(\gamma)$, $ 1 \leq i\leq n-1$ along the periodic orbit corresponding to $\gamma$  are given by
$$\chi^+_i(\gamma)=2\frac{\log \lambda_1-\log\lambda_{i+1}}{\log \lambda_1-\log\lambda_{n+1}}=\frac{\log(\lambda_1/\lambda_{i+1})}{\ell(\gamma)}$$ where $\ell(\gamma)=\frac{1}{2}(\log\lambda_1-\log\lambda_{n+1})$ is the length of the closed geodesic corresponding to $\gamma$.
These formulas imply in particular that the Lyapunov exponents are such that $ 0 < \chi^+_1(\gamma) \leq \chi^+_2(\gamma) \dots <2$.
\end{proposition}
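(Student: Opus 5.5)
We compute everything explicitly along the axis of $\gamma$, using Benoist's bi-proximality. Since $\gamma$ is bi-proximal, $\lambda_1$ and $\lambda_{n+1}$ are simple real eigenvalues with eigenlines $p^+$, $p^-$. The attracting and repelling fixed points of $\gamma$ on $\partial\Omega$ are $[p^+]$ and $[p^-]$, and the axis of $\gamma$ is the projective segment $\ell=\Omega\cap\langle p^+,p^-\rangle$. Write $\mathbb{R}^{n+1}=p^+\oplus W\oplus p^-$ with $W=E_2\oplus\dots\oplus E_n$ the sum of the remaining generalized eigenspaces.

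\emph{Step 1: the translation length.} Take $x\in\ell$ and use the affine coordinate $t$ on $\ell$ with $[p^-]$ at $t=0$ and $[p^+]$ at $t=\infty$. The element $\gamma$ acts by $t\mapsto(\lambda_1/\lambda_{n+1})t$. In this coordinate the cross ratio in the definition of $d_\Omega$ becomes the ratio of the $t$-values, so $d_\Omega(x,\gamma x)=\frac12\log(\lambda_1/\lambda_{n+1})=\ell(\gamma)$. Since $\ell$ is $\gamma$-invariant and minimizing, this is the length of the closed geodesic.

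\emph{Step 2: the stable/unstable directions.} The point $v=(x,[p^+])\in HM$ is periodic of period $\ell(\gamma)$, and $D\varphi_{\ell(\gamma)}(v)$ is conjugate to the action of $\gamma$ on the transverse data. I will use the standard identification from \cite{B1} and \cite{crampon}. The unstable leaf through $v$ is parametrized by pairs $(y^-,\text{horosphere})$ with $y^-$ near $[p^-]$ in $\partial\Omega$. Hence $E^u(v)$ is identified with $T_{[p^-]}\partial\Omega\cong \mathrm{Hom}(p^-,W)$, up to the time reparametrization along the flow. I still have to check this identification in the Finsler setting, using the smoothness of the foliation for strictly convex $\Omega$.

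\emph{Step 3: the exponents.} The tangent action of $\gamma$ at the fixed point $[p^-]$ on $T_{[p^-]}P(\mathbb{R}^{n+1})$ restricted to $W$ is $w\mapsto \gamma|_W w/\lambda_{n+1}$. Its moduli of eigenvalues are $\lambda_{i+1}/\lambda_{n+1}$ for $1\le i\le n-1$. This action must be corrected by the flow normalisation: the Finsler norm at $\gamma x$ versus $x$ rescales by the factor coming from the horofunction (Busemann) cocycle. That cocycle contributes $\lambda_{n+1}/\lambda_1$ over one period in the unstable part. In Crampon's computation, $E^u$ is realised via $\mathrm{Hom}$ between the forward line and the complement, which yields the factor $\lambda_1/\lambda_{i+1}$. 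Hence
\[
\chi_i^+(\gamma)=\frac{\log(\lambda_1/\lambda_{i+1})}{\ell(\gamma)}=2\frac{\log\lambda_1-\log\lambda_{i+1}}{\log\lambda_1-\log\lambda_{n+1}},
\]
and the exponent of each generalized block $E_{i+1}$ is read off from its modulus, since Jordan blocks do not change growth rates.

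The inequalities follow directly. Because $\lambda_1>\lambda_{i+1}>\lambda_{n+1}$ by proximality, we get $0<\chi_1^+\le\dots<2$. The monotonicity comes from the ordering of the $\lambda_i$.

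The main obstacle is Step 3: pinning down the correct identification of $E^u$ with the $\gamma$-linear data so that the normalisation gives $\lambda_1/\lambda_{i+1}$ rather than $\lambda_{i+1}/\lambda_{n+1}$. Here I would rely on Crampon's computation in \cite{crampon}. As a check, the symmetric stable exponents $-\log(\lambda_{i+1}/\lambda_{n+1})/\ell(\gamma)$ together with the unstable ones should sum consistently with the time reversibility of the flow.
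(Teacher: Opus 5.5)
Your Step 1 and the closing inequalities are fine, but Steps 2--3 contain an actual error, not just a deferred check. The unstable leaf through $v=(x,[p^+])$ consists of the vectors whose backward orbits approach that of $v$, i.e.\ those with the \emph{same} past endpoint $[p^-]$. The leaf parametrised by a varying past endpoint $y^-$ is the stable one. So $E^u(v)$ is not $T_{[p^-]}\partial\Omega$. The moduli $\lambda_{i+1}/\lambda_{n+1}$ of $d\gamma$ at $[p^-]$ that you compute are the inverses of the eigenvalue moduli of the return map on $E^s$. Up to sign, they give exactly the stable exponents $-\log(\lambda_{i+1}/\lambda_{n+1})/\ell(\gamma)$ that you list at the end.

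The ``flow normalisation'' cannot repair this. The factor $\lambda_{n+1}/\lambda_1$ you propose turns $\lambda_{i+1}/\lambda_{n+1}$ into $\lambda_{i+1}/\lambda_1<1$, which is a contraction. More generally, a Busemann-type correction is a single scalar per period, common to all $i$. It cannot turn a quantity increasing in $\lambda_{i+1}$ into the decreasing quantity $\lambda_1/\lambda_{i+1}$ once the middle moduli are not all equal. Your sentence quoting Crampon's $\mathrm{Hom}$ description gives the right answer, but it is only asserted, and it contradicts your Step 2. That step is where the whole content of the proposition lies. (The paper gives no proof of this statement; it simply cites Benoist and Crampon.)

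The missing idea is to use the future endpoint map $\eta^+:H\Omega\to\partial\Omega$.
\begin{itemize}
\item It is $C^{1+\kappa}$ and a submersion, since its restriction $\tau_x$ to each fibre is a diffeomorphism.
\item It satisfies $\eta^+\circ\varphi_t=\eta^+$ and $\eta^+\circ\gamma=\gamma\circ\eta^+$.
\item Its fibres are the weak stable leaves, so its kernel is $E^s\oplus\mathbb{R}X$.
\end{itemize}
Hence $d\eta^+$ restricts to an isomorphism $E^u(\tilde v)\to T_{[p^+]}\partial\Omega$ at a lift $\tilde v$.

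Since $\varphi_{\ell(\gamma)}\tilde v=\gamma\tilde v$, the return map at $\tilde v$ is $\gamma^{-1}\circ\varphi_{\ell(\gamma)}$. Differentiating $\eta^+\circ\gamma^{-1}\circ\varphi_{\ell(\gamma)}=\gamma^{-1}\circ\eta^+$ shows that $D\varphi_{\ell(\gamma)}|_{E^u}$ is conjugate to $d(\gamma^{-1})_{[p^+]}$ on $T_{[p^+]}\partial\Omega$. The tangent hyperplane at $[p^+]$ is $\gamma$-invariant, contains $[p^+]$ and does not contain $[p^-]$. By the primary decomposition it is therefore $P(p^+\oplus W)$, so $T_{[p^+]}\partial\Omega\cong\mathrm{Hom}(p^+,W)$. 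On this space $\gamma^{-1}$ acts by $\phi\mapsto\gamma^{-1}|_W\circ\phi\circ\gamma|_{p^+}$, whose eigenvalue moduli are $\lambda_1/\lambda_{i+1}$.

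No time normalisation is involved: the Busemann coordinate of the Hopf parametrisation never enters the spectrum on $E^u$. With this correction your computation becomes a complete, self-contained proof of the formula.
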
 
As shown in  \cite{B1}, the boundary $\partial \Omega$ is also $\beta $-convex and Benoist defined the  $\beta$-convexity coefficient of an element $\gamma \in \G$ 
$$ \beta(\gamma)=\frac{\log \lambda_1-\log\lambda_{n+1}}{\log \lambda_1-\log\lambda_{2}} = \frac{2}{\chi_1^+(\g)}.$$

Guichard \cite{Gu} showed that for a strictly convex divisible domain $\Omega$ the $\beta$ convexity of the boundary is controlled by the $\beta$-convexity of the periodic orbits via 
$$\beta_\Omega=\sup_\gamma \beta(\gamma) .$$ 
Hence the  $\beta$ convexity of the boundary is linked to the infimum of the smallest positive Lyapunov exponents;

\begin{equation}
\label{betai}
\beta_{\Omega}=\frac{2}{\inf_{\g \in \G}\chi_1^+(\gamma)}.
\end{equation}

 Crampon showed that the closure of the set of positive Lyapunov exponents for all the periodic orbits is a closed interval $[1-a, 1+a]\subset (0,2)$.

All these quantities are invariant under affine and projective transformations. In dimension 2,  approximately $\alpha$-regular for $\alpha$ finite means that the function behaves like $|t|^\alpha$ near the origin, i.e., for small $\epsilon>0$ and small $|t|$, $|t|^{\alpha+\epsilon}\leq  \frac{f(t)+f(-t)}{2}\leq |t|^{\alpha-\epsilon}$, see
Lemma 3.4.3 in \cite{Cr}.

 \subsection{SRB measure} \label{SRB}
It is known that the Bowen-Margulis measure maximizes the entropy among the geodesic flow invariant probability measures \cite{crampon}. This entropy is equal to the topological entropy $h_{top}(\phi)$  of the geodesic flow $\phi$ \cite{crampon} and it is also equal to the exponential growth of the lengths of  closed geodesics:
$$h_{top}(\phi) =\lim_{R\ra\infty} \frac{\log \#\{[\gamma]\in [\Gamma]:\ell(\gamma)\leq R\}}{R},$$ where $[\gamma]$ represents the free homotopy class of loops represented by $\gamma$. For the strictly convex real projective case, this topological entropy is also equal to the volume entropy of the Hilbert metric \cite{crampon}.  It was proved by Crampon \cite{crampon} that in the compact case the entropy of Bowen-Margulis measure of strictly convex real projective manifolds of dimension $n$ is smaller than $n-1$ with equality only in the hyperbolic case. Regarding volume entropy, this result was then extended to properly convex set  by Tholozan  (Theorem 2 in \cite{Tho}).

The Sinai Ruelle Bowen  measure is another very important  invariant probability measure which is so-called physical and hence carries the main information about statistics along the orbits. In smooth Finsler or Riemannian examples that measure coincides with  the Liouville normalized measure. A topologically transitive Anosov flow admits a unique SRB measure, see \cite{Young} for a good survey. The following are  some characterizations of the SRB measure for the flow $\phi$ on $HM$.
\begin{enumerate}
\item  The measure attains the equality in Ruelle inequality $$h_\mu(\phi)=  \int \chi^+ d\mu,$$ where $\chi^+=\sum \text{dim} E_i\cdot \chi_i^+$ denotes the measurable sum of positive Lyapunov exponents.
\item  There exists a set $V$ of full Lebesgue measure
such that for each continuous function $f:HM\ra \R$ and for every $x\in V$,
$$\lim_{T\ra\infty} \frac{1}{T}\int_0^T f(\phi^s(x))ds=\int f d\mu_{SRB}.$$
\item   The measure has Lebesgue absolutely continuous conditional measures on unstable manifolds. This characterization is due to 
Ledrappier-Young \cite{LY}.
\end{enumerate}

For closed Riemannian locally symmetric spaces, Bowen-Margulis measure coincides with SRB measure, but in general they are quite different. From 3 above, if there exists an invariant normalized volume form $\omega$, usually called Liouville volume form in the case of unit bundles, then its associated probability measure coincides with the SRB measure.  Even  when there is no invariant  volume, SRB measures are somehow  invariant measures most compatible with volume. They provide a mechanism for explaining how local instability on attractors can produce coherent statistics for orbits starting from large sets in the basin. See the survey article \cite{Young}. In addition, we proved in a previous paper \cite{PK}
that the  map $h_{SRB}:{\cal P}_n(M)\rightarrow \mathbb R$ is continuous on ${\cal P}_n(M)$,  the space of strictly convex real projective structures on $M$.\\

\section{Results} 

%
To understand the geometric properties of the SRB measure we need to consider the dual convex set $$\Omega^*=\{f\in V^*: \forall   v \in \overline{\Omega}\subset V, f(v)>0\},$$ with its boundary $\partial \Omega^*$ which can be identified with the set of tangent hyperplanes to the boundary, and the normal map
$$n: \partial \Omega \rightarrow \partial \Omega^* $$
mapping a point of the boundary to the tangent space to the boundary at that point. In the sequel, $ \sigma$ denotes the antipodal map  $ \sigma (x,[v]) =( x,[-v] )$ defined on $HM$  and $\mu_{\partial \Omega}$ denotes a Lebesgue measure on the boundary. We will denote by $ c_{\mu_{\partial\Omega}}(\g,\g^+)$ the period function of that measure (see section \ref{period} for definition).
The following theorem unveils the geometry of the transverse measure to the unstable foliation that arises in the SRB measure.


\begin{theorem}\label{main} Let $M$ be a closed manifold equipped with a strictly convex projective structure $ M= \Omega/ \rho(\pi_1(M)) $. 
\begin{enumerate}
\item 
The SRB measure on $HM$ can be written as a local product according to the unstable and stable foliations as 
$$ d\mu_{SRB} = dt \otimes d \nu^{u,L}  \otimes d\nu^s $$  
The conditional measures on the unstable leaves $ dt \otimes\nu^{u,L}$ are absolutely continuous to a Lebesgue measure with non vanishing H\" older densities.
\item The transverse measure to the unstable foliation $d\nu^s$,  is in the measure class  
of the measure  $ \sigma_* n_*^{-1}(\mu_{\partial\Omega^*})$, the pullback by the normal map of a Lebesgue measure  on the dual convex boundary. This measure is in the Lebesgue class if and only if the convex is an ellipsoid.

\item The $\Gamma$-invariant SRB geodesic current on $\partial^{(2)}\Omega= \partial\Omega \times \partial\Omega -\Delta$ for the geodesic $( p,q) $ is of the form
$$G(p,q) \times d n_*^{-1}(\mu_{\partial\Omega^*})(p) \otimes d\mu_{\partial\Omega}(q) $$
with $G$ a non-vanishing  H\"older function on $\partial^{(2)}\Omega$.
\item $G(p,q)$ is a GPS system.

\end{enumerate}
\end{theorem}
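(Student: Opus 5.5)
The plan is to work on the universal cover $H\Omega \simeq \partial^{(2)}\Omega\times\mathbb{R}$ (Hopf coordinates) and to use the thermodynamic characterization of the SRB measure. By Bowen--Ruelle, $\mu_{SRB}$ is the equilibrium state of the geometric potential $-\chi^u = -\frac{d}{dt}\big|_{t=0}\log\det D\varphi_t|_{E^u}$. This potential is H\"older, because $\partial\Omega$ is $C^{1+\kappa}$ and the flow has the same regularity. Its pressure vanishes. For Anosov flows the equilibrium state of a H\"older potential has a local product structure, built from a Patterson--Sullivan type pair of conformal densities on $\partial\Omega$ of dimension $0$. These two densities are transformed by H\"older cocycles $c^{\pm}(\g,\xi)$ whose periods are the integrals of $\mp\chi^u$, respectively $\mp\chi^s$ (after composing with $\sigma$), along the closed orbit $\gamma_g$.

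\textbf{Part (1).} This is Ledrappier--Young (characterization 3 in \S\ref{SRB}). Local product structure of Gibbs states gives $d\mu_{SRB}= dt\otimes d\nu^{u,L}\otimes d\nu^s$. The unstable conditionals equal Lebesgue measure times the H\"older density $\prod_{k\ge0}\frac{\det D\varphi_{-1}|_{E^u}(\varphi_{-k}y)}{\det D\varphi_{-1}|_{E^u}(\varphi_{-k}x)}$, which converges by H\"older continuity and exponential contraction in the past.

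\textbf{Parts (2) and (3).} The unstable leaves through the fibre over $q$ are parametrized by the backward endpoint $p\in\partial\Omega$, and the transverse measure $\nu^s$ on them is a measure on $\partial\Omega$ with a cocycle. I would identify that cocycle by computing periods. By Proposition \ref{pro:exp-periodic-orbit}, the sum of unstable exponents on $\gamma_g$ is
\[
\sum_{i=1}^{n-1}\log(\lambda_1/\lambda_{i+1}) = n\log\lambda_1-\sum_{i\ge2}\log\lambda_i = (n+1)\log\lambda_1 .
\]
The last equality uses $\det=1$.

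Next I compute the period of a Lebesgue measure. Let $\g$ act near its attracting fixed point $\g^+\in\partial\Omega$. The derivative of the projective action on $\partial\Omega$ at $\g^+$ has Jacobian $\prod_{i\ge2}\lambda_i/\lambda_1^{\,n-1}$, up to the normal direction. So the period $c_{\mu_{\partial\Omega}}(\g,\g^+)$ is an explicit combination of $\log\lambda_1$, $\log\lambda_n$ and $\log\lambda_{n+1}$. The dual action of $\g$ on $\Omega^*$ has eigenvalues $\lambda_i^{-1}$. Transporting by $n$ and $\sigma$ exchanges the roles of $\lambda_1$ and $\lambda_{n+1}$, which gives $c_{n_*^{-1}\mu_{\partial\Omega^*}}$.

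I will check that these periods agree with the period of $\nu^s$ (and with $\mu_{\partial\Omega}$ for the other factor). By Liv\v{s}ic's theorem, cocycles with equal periods are cohomologous via a H\"older function. Two quasi-invariant measures whose cocycles are cohomologous are equivalent with H\"older density. This yields (2), and assembling both densities into a single function yields (3) with a H\"older $G$.

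The remaining claim of (2) is the "Lebesgue iff ellipsoid" statement. If the two measure classes agree, the two cocycles are cohomologous, so their periods agree for every $\g$. This forces a relation between $\log\lambda_1$, $\log\lambda_2$, $\log\lambda_n$ and $\log\lambda_{n+1}$ for all $\g$, which I would use to deduce $\chi^u$ constant on periodic orbits. Then the flow preserves a smooth volume and the metric is Riemannian, i.e.\ $\Omega$ is an ellipsoid (\S\ref{SRB}). Alternatively, one can use Zariski density of $\Gamma$ in the non-Riemannian case, which rules out such a linear relation among the eigenvalues.

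\textbf{Part (4).} The GPS property is the statement that $(c^-,c^+)$ is a pair of cocycles whose sum is cohomologous to the length cocycle up to a H\"older function, given by a Gromov-product-type $G$. The identity $c^-(\g^{-1})+c^+(\g)=\text{const}\cdot\ell(\g)$ follows from the period computation. The standard construction then produces $G$ from the $\Gamma$-invariance of the current in (3).

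The main obstacle is the exact period bookkeeping. I need the derivative of $\g$ on $\partial\Omega$ and on $\partial\Omega^*$ at fixed points, the correct role of $\sigma$, and a match with $(n+1)\log\lambda_1$ rather than some other combination. I would first verify all of this in the hyperbolic case to fix normalizations.
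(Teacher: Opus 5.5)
Your route differs from the paper's and is viable in principle. You take the Bowen--Ruelle SRB measure as given, use its Hopf product structure (Haydn), and identify each boundary factor up to a H\"older density by matching periods via Theorem \ref{quasi-invariant}(3) and Proposition \ref{Prop3.1}(4). The paper instead \emph{constructs} the measure from the two Lebesgue classes: Ledrappier's equivariant families, $\nu^{s,L^*}=\sigma_*\nu^{u,L^*}$, and the Liv\v{s}ic coboundary of Proposition \ref{livsic}. It then checks flow invariance through (\ref{varmeasureL}) and (\ref{stable}), and recognizes SRB by its Lebesgue unstable conditionals. Both routes hinge on the same identity, and that is where your proposal breaks down.

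First, the computation you write is wrong:
\[
\sum_{i=1}^{n-1}\log\frac{\lambda_1}{\lambda_{i+1}}=(n-1)\log\lambda_1-\sum_{i=2}^{n}\log\lambda_i=n\log\lambda_1+\log\lambda_{n+1},
\]
not $(n+1)\log\lambda_1$. For an ellipsoid this gives $(n-1)\log\lambda$, not $(n+1)\log\lambda$. It coincides with $-\log\det(d\gamma)_{\gamma^+}=-\sum_{i=2}^{n}\log(\lambda_i/\lambda_1)$, and no $\lambda_n$ survives. So the match against $(n+1)\log\lambda_1$ that you announce would fail.

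Second, the forward factor needs no period matching, since it is Lebesgue by part (1). The whole content of (2)--(3) is the backward factor, and there you stop at ``transporting by $n$ and $\sigma$ exchanges the roles of $\lambda_1$ and $\lambda_{n+1}$''. The product structure forces the backward factor's periods to be $c^-(\gamma,\gamma^+)=\int_{\gamma^{-1}}\chi^u$. By reversibility this equals $-\int_\gamma\chi^s=-\log\lambda_1-n\log\lambda_{n+1}$. You must show that $n^{-1}_*\mu_{\partial\Omega^*}$ has the same periods. This holds because $n(\gamma^+)$ is the attracting fixed point of ${}^t\gamma^{-1}$ on $\partial\Omega^*$, with tangent eigenvalues $\lambda_{n+1}/\lambda_j$ for $2\le j\le n$. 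Hence $c_{n^{-1}_*\mu_{\partial\Omega^*}}(\gamma,\gamma^+)=-\log(\lambda_1\lambda_{n+1}^{\,n})$. This is exactly the duality $c_{n^{-1}_*\mu_{\partial\Omega^*}}(\gamma,\gamma^+)=c_{\mu_{\partial\Omega}}(\gamma^{-1},\gamma^-)$ of (\ref{identity}). Without it, nothing ties $\nu^s$ to the dual Lebesgue class.

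Your description of part (4) is also incorrect. A GPS system requires $\bar c(\gamma,p)+c(\gamma,q)=\log G(\gamma p,\gamma q)-\log G(p,q)$, and no length cocycle enters. At $(p,q)=(\gamma^-,\gamma^+)$ this reduces to $\bar c(\gamma,\gamma^-)=-c(\gamma,\gamma^+)$, which is again (\ref{identity}). Your relation $c^-(\gamma^{-1})+c^+(\gamma)=\mathrm{const}\cdot\ell(\gamma)$ holds for the ellipsoid but fails otherwise. With periods taken at the attracting fixed points, the left side is $2(n\log\lambda_1+\log\lambda_{n+1})$. Proportionality to $\ell(\gamma)=\frac12(\log\lambda_1-\log\lambda_{n+1})$ would be a nontrivial linear relation between $\log\lambda_1$ and $\log\lambda_{n+1}$ on all of $\Gamma$, which Zariski density rules out. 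The same mechanism, applied to the relation $\lambda_1\lambda_{n+1}=1$, gives the ellipsoid dichotomy in (2). No $\lambda_2$ or $\lambda_n$ enters there either. As you note at the end, (4) is just the $\Gamma$-invariance of the current in (3) rewritten, with $c=c_{\mu_{\partial\Omega}}$ and $\bar c=c_{n^{-1}_*\mu_{\partial\Omega^*}}$, so drop the length claim.
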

\begin{remark}
From the last item in the theorem, we observe that any property  almost sure with respect to SRB measure that can be read on the boundary at infinity is true on a set of measure one with respect to a Lebesgue measure on $\partial\Omega$. 
The geometric description of the SRB measure that we provide in this paper could  lead  to another proof of its continuity  with respect to the representation without using the deep result of Contreras \cite{Con}.\\
\end{remark}
Following our geometric construction, we also obtain  some results for the periods of the Lesbegue measure on the boundary $\partial \Omega$.
We have the following proposition whose proof will be given  in Sections \ref{beta} and \ref{digression}.

\begin{proposition}
\label{minimum}
\begin{enumerate}
\item The periods of $\mu_{\partial\Omega} $, are  given for any $\g \in\pi _1(M)$ by 
    $$ c_{\mu_{\partial\Omega}}(\gamma,\gamma^+) = n\log(\lambda_1(\rho(\g)) +\log( \lambda_{n+1}(\rho(\g))$$
\item  There exists  a positive constant $K(\rho) >0$ depending only on the representation $\rho$ such that 
$$ \inf_{\g} \frac{c_{\mu_{\partial\Omega}}(\gamma,\gamma^+)}{  \ell(\gamma)} = K(\rho)  $$ where $
\ell(\gamma)$ is the Hilbert length of $\rho(\gamma)$.
\item  The entropy $h_{SRB}$ of the SRB-measure is greater or equal to $K(\rho)$.
\end{enumerate}
\end{proposition}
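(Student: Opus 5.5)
The plan is to compute the period of $\mu_{\partial\Omega}$ directly at the attracting fixed point, and then derive (2) and (3) from this formula and the thermodynamic formalism. Recall that the period $c_\mu(\g,\g^+)$ of a $\Gamma$-quasi-invariant measure is $\log$ of the Radon--Nikodym derivative $\frac{d(\g^{-1})_*\mu}{d\mu}$ evaluated at the fixed point $\g^+$, i.e.\ minus the log of the Jacobian of $\g$ acting on $\partial\Omega$ at $\g^+$. Sign conventions will be fixed by the definition in Section~\ref{period}.

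For (1), take an affine chart in which $\g^+$ is the eigenline $E_1$ for $\lambda_1$. The tangent space $T_{\g^+}\partial\Omega$ is the tangent hyperplane, which is projectively $E_2\oplus\dots\oplus E_n$ (supplementary to $E_{n+1}$). The tangent hyperplane is $\g$-invariant and $\g^-=E_{n+1}$ lies off it. The induced action of $\g$ on $P(\R^{n+1})$ near $[e_1]$ is $v\mapsto (g v)/\lambda_1$ on the complementary coordinates. Its differential on the tangent hyperplane therefore has determinant $\prod_{i=2}^{n}\lambda_i/\lambda_1^{\,n-1}$, up to sign.

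Since $\det\g=1$, we have $\prod_{i=2}^n\lambda_i=1/(\lambda_1\lambda_{n+1})$, so the Jacobian is $(\lambda_1^{n}\lambda_{n+1})^{-1}$. Because $\partial\Omega$ is only $C^{1+\kappa}$, I would justify that the Lebesgue measure on $\partial\Omega$ (as a $C^1$ graph over the tangent hyperplane) has Jacobian at the fixed point equal to the determinant of the linear map restricted to $T_{\g^+}\partial\Omega$. This holds because $\g$ is a $C^1$ map of $\partial\Omega$ fixing $\g^+$, and the graph chart is $C^1$ with identity differential at $\g^+$. This gives $c=n\log\lambda_1+\log\lambda_{n+1}$, matching the sign of the statement.

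For (2), I would rewrite the period as $c=\sum_{i=1}^{n-1}\log(\lambda_1/\lambda_{i+1})$, using $\det=1$ again to check the identity. By Proposition~\ref{pro:exp-periodic-orbit}, this gives $c/\ell(\g)=\sum_i\chi_i^+(\g)=\chi^+(\g)$, the sum of positive Lyapunov exponents, so the quotient is the average of $\chi^+$ on the closed orbit. Then $c/\ell\ge(n-1)\inf_\g\chi_1^+(\g)>0$, which is positive by Guichard's result~\eqref{betai} with $\beta_\Omega<\infty$, or by Crampon's interval $[1-a,1+a]\subset(0,2)$. Setting $K(\rho)$ to be the infimum, which depends only on the conjugacy-invariant eigenvalue data of $\rho$, gives positivity.

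For (3), I would use that $\chi^+$ is the integral of the Hölder potential $-\frac{d}{dt}\log\det D\varphi_t|_{E^u}$, so the orbit averages of this potential are exactly the values $c/\ell$ above. Closed-orbit measures are weak-$*$ dense in the invariant probability measures of a transitive Anosov flow (Sigmund), so $\int\chi^+d\mu\ge K(\rho)$ for every invariant $\mu$, including $\mu_{SRB}$. The Pesin/Ruelle equality in characterization (1) of \S\ref{SRB} then gives $h_{SRB}=\int\chi^+d\mu_{SRB}\ge K(\rho)$.

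The main obstacle is the regularity issue in (1): the identification of the boundary Jacobian with the linear determinant, and correctly relating the period in Section~\ref{period} to $\chi^+$ with consistent signs. The density step in (3) is standard.
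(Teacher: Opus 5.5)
Your proposal is correct and follows the paper. Part (1) is the same fixed-point Jacobian computation as in the proof of Proposition~\ref{relation}; note that with the paper's convention $(\g^{-1})_*\mu=e^{-c_\mu(\g,\cdot)}\mu$ the period is minus the log of that Radon--Nikodym derivative, which is the sign your final formula actually uses. Part (2) is exactly the $\beta$-convexity argument of Section~\ref{beta}, $c_{\mu_{\partial\Omega}}(\g,\g^+)/\ell(\g)=\chi^+(\g)\ge (n-1)\inf_{\g}\chi_1^+(\g)=2(n-1)/\beta_\Omega>0$.

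For (3) your route differs slightly. The paper invokes the Parry--Pollicott equidistribution of closed orbits weighted by $1/J_\g$ toward $\mu_{SRB}$. You instead use Sigmund's density of closed-orbit measures, which the paper itself uses to prove \eqref{K}, to get $\int\chi^+\,d\mu\ge K(\rho)$ for every invariant $\mu$. Both finish with the Pesin--Ruelle equality $h_{SRB}=\int\chi^+\,d\mu_{SRB}$, and your version is slightly more economical.
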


%
\begin{remark}If the projective structure is Riemannian, any hyperbolic isometry has
eigenvalues $\lambda> 1 > \cdots > \lambda^{-1}>0$. Hence
$$c_{\mu_{\partial\Omega}}(\g, \g^+)=n \log\lambda- \log\lambda=(n-1)\log\lambda.$$ Furthermore, $\ell(\g)=\log\lambda$, hence $K(\rho)=n-1$. Consequently
$$h_{SRB}\geq n-1$$ by the third item of the previous proposition. Since the topological entropy is $n-1$ for a closed hyperbolic $n$-manifold, $h_{SRB}=n-1$, and SRB measure agrees with Bowen-Margulis measure.
\end{remark}
To get some counting estimates for the SRB measure we extend the result of Ledrappier \cite{Le} (Corollary 1) and give a full proof for any 
Gibbs measure  with vanishing pressure as long as its periods are positive. Using then the synchronization of the flow as introduced by B. Parry \cite{Pa} and then extended by A. Sambarino \cite{Sa}, together with results of Pollicott for suspension flow over mixing subshifts of finite time, we prove
\begin{theorem}
\label{growth}
Let $\mu_{\partial\Omega} $ be a Lebesgue measure on the boundary $\partial\Omega$ of a divisible convex set $\Omega $.
Then the growth of the periods of $\mu_{\partial\Omega} $, is such that
\begin{align}
\begin{split}
1 = & \lim_{r \rightarrow \infty}sup {1\over r} \log \# \lbrace  [\gamma]  ;  r-1\leq  c_{\mu_{\partial\Omega}}(\gamma,\gamma^+) < r \rbrace \\
  & =
  \lim_{r \rightarrow \infty}{1\over r} \log \# \lbrace  [\gamma]  ;   c_{\mu_{\partial\Omega}}(\gamma,\gamma^+)  \leq r \rbrace  .
  \end{split}
\end{align}
\noindent Indeed, the growth of  $ N(t) = \# \lbrace  [\gamma]  ;   c_{\mu_{\partial\Omega}}(\gamma,\gamma^+)  \leq t \rbrace $ is controlled by 
\begin{equation}
 \lim_{t \rightarrow \infty} te^{-t} N(t) = 1. 
\end{equation}
\begin{remark}
 In section 5, we observe that the first property holds true for any Gibbs measure with vanishing pressure as long as its periods are positive. 
 \end{remark}
\end{theorem}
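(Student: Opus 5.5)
The plan is to read the periods $c_{\mu_{\partial\Omega}}(\gamma,\gamma^+)$ as the periods of a positive Hölder reparametrization of the Hilbert geodesic flow, and then apply the counting theory for such flows.

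First, recall that Lebesgue measure on $\partial\Omega$ is quasi-invariant under $\Gamma$. Its Radon–Nikodym cocycle $c(\gamma,\xi)=-\log \frac{d\gamma^{-1}_*\mu_{\partial\Omega}}{d\mu_{\partial\Omega}}(\xi)$ is a Hölder cocycle, since $\partial\Omega$ is $C^{1+\kappa}$ by Benoist. By Proposition \ref{minimum}(1), its period at the attracting fixed point $\gamma^+$ is $n\log\lambda_1+\log\lambda_{n+1}$. Part (2) of the same proposition gives $c(\gamma,\gamma^+)\ge K(\rho)\,\ell(\gamma)>0$, so all periods are positive and comparable to lengths.

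Following Ledrappier \cite{Le} and Sambarino \cite{Sa}, I would then produce a positive Hölder function $F$ on $HM$ whose integral over each closed orbit $\gamma$ equals $c(\gamma,\gamma^+)$. The construction averages the cocycle along the flow on $\partial^{(2)}\Omega\times\R$, and Livšic's theorem guarantees that positivity of the periods lets one choose $F>0$. Reparametrizing $\varphi_t$ by $F$ gives a flow $\psi_t$ with the following properties:
\begin{enumerate}
\item it is still topologically mixing (I return to this below);
\item its closed orbits are in bijection with conjugacy classes $[\gamma]$;
\item its period for $[\gamma]$ is $c(\gamma,\gamma^+)$.
\end{enumerate}
The topological entropy $h_\psi$ of $\psi_t$ is the unique $s$ with $P(-sF)=0$ for the original flow.

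The key step is showing $P(-F)=0$, i.e.\ $h_\psi=1$. The Lebesgue measure on $\partial\Omega$, together with the dual measure $n_*^{-1}\mu_{\partial\Omega^*}$ from Theorem \ref{main}(3), gives a $\Gamma$-invariant current $G\, d n_*^{-1}(\mu_{\partial\Omega^*})\otimes d\mu_{\partial\Omega}$. This current transforms under $\gamma$ with the cocycle $c$ on the $\partial\Omega$ factor and the dual cocycle on the other. That is exactly a Patterson–Sullivan pair of exponent $1$ for the cocycle $c$; the dual cocycle has the same periods $c(\gamma^{-1},\cdot)$ after applying $\sigma$. In Ledrappier's correspondence, a quasi-invariant measure whose cocycle is $c$ with exponent $\delta$ exists exactly when $P(-\delta F)=0$. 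Hence $\delta=1$, which is equivalently the statement that the SRB measure is the equilibrium state of $-F$ with vanishing pressure. I expect this to be the main obstacle: one must check carefully that the "full proof" of Ledrappier's Corollary 1 works for any Gibbs measure with positive periods. I would first try to identify $F$ with the unstable Jacobian $-\frac{d}{dt}\log\det D\varphi_t|_{E^u}$ up to a coboundary, since the SRB equilibrium state has zero pressure for that potential.

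Finally, I would pass to counting. I would code $\varphi_t$ by a Bowen–Ratner Markov section as a suspension over a mixing subshift of finite type; $\psi_t$ is then the suspension with roof given by $F$ integrated over the original roof. Pollicott's and Parry–Pollicott's prime orbit theorem for suspension flows, with a roof function that is not cohomologous to a locally constant function, then gives $N(t)\sim e^{t}/t$. This yields $\lim te^{-t}N(t)=1$, and the two logarithmic limits in the statement follow directly from this asymptotic.

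It remains to justify non-arithmeticity, which is what makes $\psi_t$ mixing. Suppose the periods $n\log\lambda_1+\log\lambda_{n+1}$ all lay in a discrete subgroup $a\mathbb Z$. This would contradict the Zariski density of $\Gamma$ in the non-Riemannian case, via Benoist's result that the Jordan projections of a Zariski dense group generate a dense subgroup of the Cartan subspace. In the Riemannian case the periods are $(n-1)\ell(\gamma)$, and non-arithmeticity follows from that of the length spectrum.
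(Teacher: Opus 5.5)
Your proposal is correct and follows essentially the paper's route: a positive Hölder potential whose periods are $c_{\mu_{\partial\Omega}}(\gamma,\gamma^+)$ (the paper takes the time-averaged unstable Jacobian $\tilde F$), entropy $1$ for the reparametrized flow, Sambarino's coding lemma, mixing from Zariski density, and the Parry--Pollicott prime orbit theorem. The variations are minor. You obtain $h=1$ from $P(-F)=0$, using Bowen--Ruelle for the unstable Jacobian to which $F$ is cohomologous, whereas the paper uses the Ruelle equality and inequality for the synchronized flow. Your direct non-arithmeticity check via Benoist's density of Jordan projections is the weak-mixing hypothesis Parry--Pollicott actually need, not the ``not cohomologous to a locally constant function'' condition you state earlier.
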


{\bf Plan of the paper}: We recall some basic yet important facts about Anosov flow, Hopf parametrization, quasi-invariant measures, cocycles in sections \ref{pre} and  \ref{thermo}. For a given quasi-invariant measure $\mu$ on $\partial\Omega$, we construct an equivariant measure $\mu_x, x\in\Omega$ with the same period as $c_\mu$. In section \ref{pot}, it is explained how to construct a potential $F_\mu$ with the period
$c_\mu(\gamma,\gamma^+)=\int_\gamma F_\mu$.
In section \ref{srb}, it is explained how to obtain a SRB measure using $\bar\mu=n^{-1}_*\mu_{\partial \Omega^*}$. It satisfies an important property that
$c_\mu(\gamma,\gamma^+)=c_{\bar\mu}(\gamma^{-1},\gamma^-)$, and $F_{\bar\mu}$ and $F\circ\sigma$ are cohomologous where $\sigma$ is an antipodal map
on $HM$. In section \ref{current}, using Ledrappier-Pollicott-Schapira cocycle, $B^{F_\mu}$, and the H\"older cocyle $c_{F_\mu}$, we show that there exists a Gromov product $(\xi,\eta)_o$  on $\partial^{(2)}\Omega=\{(x^-,x^+)\in\partial\Omega\times\partial\Omega: x^-\neq x^+\}$ for a fixed base point $o\in \Omega$ such that 
$$e^{-(\xi,\eta)_o} d\mu(\xi)\otimes d\bar\mu(\eta)$$ is a $\Gamma$-invariant geodesic current where $(c_{\bar\mu},c_\mu, -(\xi,\eta)_o)$ is a continuous Gromov-Patterson-Sullivan (abbreviated by GPS) system. Finally in section \ref{count}, we show that for
$ N(t) = \# \lbrace  [\gamma]  ;   c_\mu(\gamma,\gamma^+)  \leq t \rbrace $,
\begin{equation*}
 \lim_{t \rightarrow \infty} te^{-t} N(t) = 1,
\end{equation*} using the reparametrization technique developed by Sambarino, and
the prime number theorem by Parry-Pollicott.
%
%
%

\section{preliminaries}\label{pre}

Let $M$ be a closed manifold with  projective structure whose image of the developping map is  a strictly convex open subset $\Omega\subset P(\mathbb{R}^{n+1})$ which is divisible by the discrete subgroup $\rho(\pi_1(M)) =\Gamma\subset SL(n+1,\mathbb{R})$. The geodesic flow $\varphi_t$ with respect to Hilbert metric on the homogenous bundle $HM = (TM\setminus\{0\})/\mathbb{R}_+^*$ is Anosov, i.e.,
\begin{enumerate}
 \item Its differential $d\varphi_t$ preserves a splitting of the tangent bundle of $HM$
\begin{equation}
THM = E^s\oplus \mathbb{R}\cdot X\oplus E^u 
\end{equation}
where the vectors in $E^s$ are uniformly exponentially contracted in the future, $X$ is the generator of the geodesic flow, and the vectors in $E^u$ are uniformly exponentially contracted in the past for some Riemannian metric on $HM$. 

\item The stable distribution $ E^s\oplus \mathbb{R}\cdot X$ has the same regularity as the boundary that is  $C^{1+\kappa}$ and integrates into the so-called stable foliation $\mathcal {F}^s$. The leaf $ W^s_z$ of a point $z\in HM$ is a $C^{\infty}$ dense immersed manifold. But the foliation is transversely $C^{\kappa}$. Similar properties are true for  the unstable foliation $\mathcal {F}^u$ associated with the unstable distribution  $\mathbb{R}\cdot X\oplus E^u$.
\item The strong stable distribution $ E^s$ has the same regularity as the tangent space of the boundary that is  $C^{\kappa}$ and integrates into the so-called strong stable foliation $\mathcal {F}^{ss}$. The leaf $ W^{ss}_z $ of a point $z\in HM$ is a $C^{1+\kappa}$ immersed manifold. But the foliation is transversely $C^{\kappa}$. Similar properties are true for the strong unstable foliation $\mathcal {F}^{su}$ together with the strong unstable distribution  $E^u$.
\item The strong stable leaf of a point $z = (x,[v]) \in HM$ is also the set $W_z^{ss}= \lbrace z' \in HM \vert  \lim_{t\rightarrow +\infty} d(\varphi_t (z), \varphi_t (z'))=0 \rbrace $ with $d$, a distance on $HM$ induced from the Hilbert Finsler norm.

\end{enumerate}
We will have to consider all the lifts of these objects on $H\Omega = (T\Omega\setminus\{0\})/\mathbb{R}_+^*$
 where the geometric description is much simpler. We will use the same notations when this does not create a confusion and will take advantage of the equivariance with respect to the action of the group $\Gamma$. 
 As seen before, any point $ z = (x ,[v]) \in H\Omega $ fixes a unique geodesic  with image $c^{x^+}_x = c_{x,v}$ with $v$ the vector representing $[v]$
 such that $\|v\|_x =1$. Given a point $x \in \Omega$ we introduce the family of maps 
 $\tau_x : H_x\Omega \rightarrow  \partial \Omega$ 
 by $ \tau_x([v])= x^+$ 
 where $x^+$ is the end in the future of the associated geodesic. Map $ \tau_x$ is as regular as the boundary that is $ C^{1+\kappa}$ in the compact strictly convex case. Then  map $ \tau : H\Omega \rightarrow \Omega \times \partial \Omega$ is a fibered homeomorphism above $\Omega$.
 The geometric support of a geodesic is uniquely determined by its end points in the future and in the past $(x^+, x^-)$ that is a point in $  \partial^{(2)}\Omega \equiv \{(x^-,x^+)\in\partial\Omega\times\partial\Omega: x^-\neq x^+\} $. 
 \subsection{Buseman functions}\label{buse}
 There is a classical definition of the Busemann functions that extends without difficulties to the setting of the Finsler Hilbert distance on $H\Omega$.
   From now on,  mark $\Omega$ with some fixed point $o \in \Omega$. Here the Busemann function $B_\xi(o, \cdot):\Omega \ra \mathbb R$ is defined as
$$B_\xi(o, x)=\lim_{t\ra\infty} (t-d(\gamma_o(t), x)),$$ where $\gamma_o$ is a unit speed geodesic from $o$ such that $\gamma_o(\infty)=\xi$.
Here $z$ is an any point in $\Omega$, and the Busemann function is normalized so that $B_\xi(o,o)=0$.\\ 
There is another way to present it namely using  the  one forms obtained by restricting the flow invariant Hilbert form $A^H$ (see \cite{ Cr}) to each stable  leaves. Those forms are only $C^{\kappa}$ but they are  necessarily  closed due to the Anosov contraction  property. Then  up to a constant there exists on each stable  leaf $ W^s_z$ a $C^{1+\kappa}$ function $\tilde B^s_z  : W_z^s \rightarrow \R$ such that $d\tilde B^s_z = - A^H_z$. By definition their  level sets are the strong stable leaves on $H\Omega$.\\  The advantage of this second presentation is that we can generalize it in presence of a potential, see \cite{scha}. It is shown by Benoist that the strong stable distribution is everywhere transverse to the vertical distribution hence any strong stable leaf $W_z^{ss}$ projects over $\Omega $ to  a smooth manifold $S^s (x(z), x^+(z))$ called the horosphere at $x$ with point at infinity $x^+$. The base projection reduced to a strong stable leaf on $H\Omega$ is a smooth diffeomorphism on its image.\\
   From now on,  mark $\Omega$ with some fixed point $o \in \Omega$. 
 The family of stable  Buseman  are the projection of the function $ \tilde B^s$. They depend  $ C^{1+\kappa}$ smoothly on $x$ and are only $C^{\kappa}$ H\"older-continuous  with respect to the  boundary point.  \\
  The Hopf parametrization   $\mathcal{H} : H\Omega \rightarrow   \partial^{(2)}\Omega \times {\mathbb R}$ defined by 
 \begin{equation}
\mathcal{H} (x, [v]) = ( x^-(x, [v]) , x^+(x, [v]) ,  B^s(x,x^-(x, [v])))
 \end{equation}
   
is a  $C^\kappa$ bi-Hölder continuous homeomorphism, and, as noticed before, the two first components are indeed $C^{1+\kappa}$.
In these coordinates it is easy to express the action of the lift of the Hilbert geodesic flow
 \begin{equation}
 \varphi_t( x^- , x^+,  \tau ) = ( x^- , x^+ ,  \tau+t)) 
\end{equation} 

 And of the action of the fundamental group 
 \begin{equation}
 \g ( x^- , x^+,  \tau ) = ( \g x^-, \g x^+, \tau - B^s( \g^{-1}o, x^-)
 \end{equation} 
 \\
 We will follow the notations of Ledrappier (\cite{Le}) and will say that a function $f $ on $ \Omega \times \partial \Omega$ is of class $C_s^k$ if $ f( ., \xi) $ is $C^k$  and for $j\leq k$ the j-jets of the function $ f(. , \xi)$ are Hölder continuous in $\xi$ . Similarly a function $ g $ on $H\Omega$ will be said in $ C^k_s$ if 
 $ f = g \circ \tau^{-1}$ 
is $C_s^k$. It is possible to have the same definitions with the strong unstable foliation. 
\section{thermodynamic formalism}\label{thermo}
\subsection{Hopf parametrization of equilibrium states}\label{ss.SRB-BM} 
 
Recall that an equilibrium measure $\mu$ for a flow $(\varphi_t)_{t\in\mathbb{R}}:X\to X$ with respect to a potential $f:X\to \mathbb{R}$ is a flow-invariant probability achieving the supremum of the quantity $h_\nu(\varphi_1)+\int f d\nu$ among all flow-invariant probability measures $\nu$. In the case of the geodesic flow $(\varphi_t)_{t\in\mathbb{R}}$ of the Hilbert metric on a flat real projective manifold $M$ attached to a strictly convex open subset $\Omega\subset P(\mathbb{R}^{n+1})$ which is divisible by $\Gamma\subset SL(n+1,\mathbb{R})$. \\ We know from Haydn \cite{Haydn} that any equilibrium state $\mu_f$ of the (topologically mixing, Anosov) flow $(\varphi_t)_{t\in\mathbb{R}}$ with
respect to a H\"older potential $f:HM\to\mathbb{R}$ corresponds to a
Radon measure with a \emph{product} structure 
$$d\mu_f= dt\otimes d\nu^s\otimes d\nu^u$$ on $HM$
\\
  The homogeneous bundle of the universal cover$H\tilde M$  is   $C^\kappa$ bi-H\"older continuous homeomorphic to ,
 $\mathbb{R}\otimes
\partial^{(2)}\Omega$, where $\partial^{(2)}\Omega=\{(x^-,x^+)\in\partial\Omega\times\partial\Omega: x^-\neq x^+\}$ is the space of oriented geodesics of the Hilbert metric of $\Omega$. The projections on the two last components are indeed $C^{1+\kappa}$. Furthermore, $\mu_f$ corresponds to a $\Gamma$-invariant Radon measure on $\partial^{(2)}\Omega$ of the form $G(u,v)d\nu^s(u)\otimes d\nu^u(v)$ for a H\"older continuous function $G$ on $\partial^{(2)}\Omega$. 

Among the most well-known equilibrium states, one finds the Bowen--Margulis measure $\mu_{BM}$ associated to the trivial potential $f\equiv 0$ and the Sinai--Ruelle--Bowen (SRB) measure $\mu_{SRB}$ associated to the potential $f=\frac{d}{dt}(\log\det D\varphi_t|_{E^u})|_{t=0}$. For later reference, let us recall that the product measure $G_{SRB} \cdot \nu^s_{SRB}\otimes\nu^u_{SRB}$ on $\partial^{(2)}\Omega$ associated to $\mu_{SRB}$ has the property that $\nu^u_{SRB}$ is absolutely continuous with respect to a Lebesgue measure on $\partial\Omega$.  

\subsection{Quasi-invariant measures, cocycles and periods}
Periodic orbits are, as in the Riemannian case, closely related to the representation 
$\Gamma=\rho(\pi_1(M))$ into the group $SL(n+1,\mathbb{R})$ \cite{B1}. For any element $g$ of this group let us denote
$ \lambda_1(g) \geq \cdots \geq \lambda_{n+1}(g)$ the modulus of the eigenvalues of $g$ repeated with multiplicities  and $l_1(g) \geq \cdots \ \geq l_{n+1}(g)$
the logarithms of these modulus. An element is proximal if $ \lambda_1(g) >  \lambda_2(g)$. Let us denote by $x^+(g)$  the unique attracting fixed point of a proximal element $g$ in $\G \subset SL(n+1,\mathbb{R})$. An element $g$ is said bi-proximal if $g$ and $g^{-1}$ are proximal. From Benoist  we know that for strictly convex divisible convex $\Omega$  every element $g$ of $\Gamma - \lbrace  1\rbrace$ is bi-proximal and fixes a unique geodesic in  $\Omega$, the  straight line  joining $x^-(g) =x^+(g^{-1})$ to $x^+(g)$. Each non-trivial free homotopy class $[g]$ contains a unique closed geodesic. {\bf For simplicity of notation, sometimes we will use the notations
$\gamma^\pm$ for unique attracting and repelling fixed points of $\rho(\gamma)$. We will use $\g^\pm$ and $x^\pm(\g)$ interchangeably, and note that $(\g^{-1})^+=\g^-$}.

\label{period}
In this part we will follow the fruitful approach developed by
Ledrappier \cite{Le}, Hamenst\" adt, Kaimanovich, Benoist, etc where relations between cocycles on the boundary, quasi-invariant measures on the boundary, transverse measures to the stable or unstable foliation - Gibbs measures associated to a potential and Patterson-Sullivan measures, are shown to rely on some cocycles. The paper of Ledrappier deals with geodesic flows of negatively curved Riemannian closed manifolds. But, as we have seen here the Hilbert flow is Anosov,  the underlying geometry of the boundary $\partial \Omega $ and the Hopf parametrization are very similar to the  negatively curved Riemannian case. The regularity for the potentials in \cite{Le} is $ C^2_s$ but in fact by looking at the proofs  we observe that the difference of regularity in our case is due only to the normalization of the generator $X$ of the flow, which forces it to be  $C^{1+\kappa}$ 
and not $C^\infty $ as in the Riemannian case. This lack of regularity  appears only when there is a contraction of a form with $X$ or a Lie derivative along the integral curves of $X$, but no further derivative is needed. So many of  the statements in \cite{Le} may be replaced  with a regularity $C^{1+\kappa}_s$. 
\begin{remark} 
 Nevertheless,  Ledrappier uses some results of harmonic analysis in the third assertion of theorem 3 of \cite{Le}. These properties deeply rely on Riemmannian geometry. In the Hilbert geometry case we find an explicit construction for the SRB measure but we have not  been able to obtain such a nice geometric understanding for  all equilibrium states.
\end{remark}

Let us recall some definitions with some properties.
\begin{proposition}\label{Prop3.1}
A H\"older cocycle  on the boundary is  a map $ c:\pi_1(M) \times  \partial\Omega \rightarrow \R$ , $ c(\gamma,.) \in C^{\alpha}(\partial\Omega,\R)$ such that
\begin{enumerate}
\item $c(\g_2\g_1,\xi) = c(\g_1,\xi) +c(\g_2, \g_1.\xi) $
\item Two cocycles c and $c'$ are cohomologous if there exists a H\" older function  $U$ such that
$$c(\g_,\xi) -c'(\g,\xi) = U(\g.\xi)-U(\xi) $$

\item $ c(\g,x^+(\g))$ depends only on the cohomology class. These numbers are the periods of $c$.
\item Two H\"older cocycles are cohomologous iff they have the same periods  \cite{Le} [Thm 1.a]
\end{enumerate}
\end{proposition}
\begin{remark}
From the definition  we get $ c(1,\xi)= 0$ because from (1) we have $c(1.\g,\xi) = c(\g,\xi)+ c(1,\g\xi) $ . \\
Thus in particular
\begin{equation}
c(\g^{-1}, x^+(\g^{-1})) + c(\g , x^-(\g)) =0 .
\end{equation} 
\end{remark}

\begin{theorem}
 \label{quasi-invariant}
 \cite{Le} [Thm 1.b]
 A  positive finite measure $\mu$ on $\partial\Omega$ is $\G$ quasi-invariant if there exists a H\"older cocycle $c_\mu$ on $\partial \Omega $ such that  for any $\g \in \G$ and any continuous function  $f$,
 \begin{equation}
 \int_{\partial \Omega} f(\g^{-1} \xi) d\mu=  \int_{\partial \Omega} e^{ -c_\mu(\g, \xi)}  f(\xi) d\mu
 \end{equation}  such that
 the pullback measure $ \g ^*(\mu)$ is absolutely continuous
 with respect to $\mu$ with the H\" older
  Radon-Nikodym derivative.
 \begin{enumerate}
  \item
 $   \g ^*(\mu) = e^{-c_\mu(\g,.) } \mu$
 
  \item   The period function of the measure $\mu$ is given by 
  $$ c_\mu(\g,\g^+) $$
for each $\g\in\Gamma$.
   \item It is known that $\mu'$ is absolutely continuous  with respect to $\mu$  if and only if  they have the same periods, i.e.,
   $$c_{\mu'}(\g,\g^+)= c_{\mu}(\g,\g^+) $$ for all $\g\in\Gamma$.
  
\end{enumerate}
  \end{theorem}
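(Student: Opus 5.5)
We need to prove Theorem \ref{quasi-invariant}, attributed to Ledrappier [Thm 1.b]: a finite measure $\mu$ on $\partial\Omega$ that is $\Gamma$ quasi-invariant with Hölder Radon--Nikodym derivatives defines a Hölder cocycle $c_\mu$ satisfying items (1)--(3). The plan is to verify the cocycle identity directly, identify its periods, and reduce item (3) to Proposition \ref{Prop3.1}(4) together with an ergodicity argument.

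First, define $c_\mu(\g,\xi)$ by $\frac{d\g^*\mu}{d\mu}(\xi)=e^{-c_\mu(\g,\xi)}$, where $\g^*\mu(A)=\mu(\g A)$. This is Hölder in $\xi$ by hypothesis, and the change of variables in the displayed integral formula is then just the definition, which gives item (1). The cocycle identity follows from the chain rule for Radon--Nikodym derivatives:
\[
\frac{d(\g_2\g_1)^*\mu}{d\mu}(\xi)=\frac{d\g_1^*(\g_2^*\mu)}{d\g_1^*\mu}(\xi)\,\frac{d\g_1^*\mu}{d\mu}(\xi)=e^{-c_\mu(\g_2,\g_1\xi)}\,e^{-c_\mu(\g_1,\xi)}.
\]
This gives $c_\mu(\g_2\g_1,\xi)=c_\mu(\g_1,\xi)+c_\mu(\g_2,\g_1\xi)$ $\mu$-almost everywhere. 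Since both sides are continuous and $\mu$ has full support (its support is closed, $\Gamma$-invariant and nonempty, and the action on $\partial\Omega$ is minimal), the identity holds everywhere. Item (2) is then just the definition of periods in Proposition \ref{Prop3.1}(3), evaluated at the fixed point $\g^+$.

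For item (3), the forward direction is as follows. If $\mu'\ll\mu$, write $\mu'=\psi\mu$. By ergodicity of $\Gamma$ on $\partial\Omega$ for quasi-invariant Gibbs-type classes, which follows from the Hopf argument for the Anosov flow through the Hopf parametrization, $\mu$ and $\mu'$ are then equivalent. The cocycles satisfy $c_{\mu'}(\g,\xi)-c_\mu(\g,\xi)=\log\psi(\xi)-\log\psi(\g\xi)$ almost everywhere, and evaluating at the fixed point $\g^+$ would kill the coboundary. The difficulty is that $\psi$ is only measurable, so one cannot simply evaluate at $\g^+$. Instead one uses Livšic-type rigidity: a measurable coboundary between two Hölder cocycles is Hölder. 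I would transfer the cocycle to a Hölder function on $HM$ via the potential construction of section \ref{pot} and apply the measurable Livšic theorem for Anosov flows. Then $c_{\mu'}$ and $c_\mu$ are Hölder cohomologous, so by Proposition \ref{Prop3.1}(3) they have equal periods.

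For the converse, equal periods imply by Proposition \ref{Prop3.1}(4) that $c_{\mu'}-c_\mu=U\circ\g-U$ with $U$ Hölder. Then $e^{U}\mu'$ and $\mu$ have the same Radon--Nikodym cocycle. One needs uniqueness: a quasi-invariant measure class is determined by its cocycle, so two finite measures with identical Hölder cocycle are proportional. I would prove this by viewing both measures as transverse measures of a Gibbs state through the Hopf product structure recalled in section \ref{ss.SRB-BM}, and using uniqueness of the equilibrium state of the corresponding Hölder potential (Haydn). The main obstacle is this uniqueness and ergodicity step, together with the measurable-to-Hölder upgrade in the forward direction; everything else is formal.
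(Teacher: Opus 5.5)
The paper does not prove this statement; it quotes it from Ledrappier \cite{Le} (Theorem~1) and asserts that the Riemannian arguments carry over.

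The formal part of your proposal is fine:
\begin{itemize}
\item defining $c_\mu$ by $d\gamma^*\mu/d\mu=e^{-c_\mu(\gamma,\cdot)}$, with $\gamma^*\mu(A)=\mu(\gamma A)$, matches the displayed integral identity;
\item the chain-rule computation gives exactly the relation of Proposition~\ref{Prop3.1}(1);
\item full support, from minimality of $\Gamma$ on $\partial\Omega$, upgrades that relation from $\mu$-a.e.\ to everywhere;
\item items (1)--(2) are definitional.
\end{itemize}
Reducing the converse half of (3) to Proposition~\ref{Prop3.1}(4) plus uniqueness of the measure with a given cocycle is also correct. Uniqueness needs only ergodicity: if $\mu_1,\mu_2$ have the same cocycle, the densities $d\mu_i/d(\mu_1+\mu_2)$ are $\Gamma$-invariant. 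So Haydn is not needed there.

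The genuine gap is the step you treat as standard. The Hopf argument, the measurable Livsic theorem and uniqueness of equilibrium states all need, as input, a flow-invariant Gibbs measure on $HM$ whose transverse measure to the stable foliation lies in the class of the \emph{given} $\mu$. Calling $\mu$ a ``Gibbs-type class'' assumes this, and for an arbitrary quasi-invariant $\mu$ you have no such measure.

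The natural candidate is the equilibrium state of $-F_\mu$ (Section~\ref{pot}). By Haydn it has product structure, but its forward-endpoint factor has periods equal to $c_\mu(\gamma,\gamma^+)$ shifted by $P(-F_\mu)\,\ell(\gamma)$. Identifying it with the class of $\mu$ requires both $P(-F_\mu)=0$ and the implication ``same periods $\Rightarrow$ equivalent'' that you are trying to prove, so the argument is circular.

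Building the invariant measure directly as $dt\otimes e^{-(x,y)}d\bar\mu\otimes d\mu$ runs into the same missing input:
\begin{itemize}
\item it needs a dual quasi-invariant measure $\bar\mu$ with periods $c_\mu(\gamma^{-1},\gamma^-)$;
\item the existence of $\bar\mu$ rests on $P(-F_\mu)=0$ (Ledrappier's Proposition~2, which the paper itself calls a deep fact) and a Patterson--Sullivan construction for $F_\mu\circ\sigma$;
\item you then still need a shadow-lemma argument to show the product is the Gibbs state, because Haydn goes from equilibrium state to product structure, not back.
\end{itemize}
The paper carries this out only for $\mu_{\partial\Omega}$, where the dual $n^{-1}_*\mu_{\partial\Omega^*}$ is explicit (Section~\ref{srb}).

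The same gap blocks your forward direction. The transfer function $\log\frac{d\mu'_x}{d\mu_x}(\xi)$ is defined only for $\mu$-a.e.\ forward endpoint $\xi$. The measurable Livsic theorem requires an invariant reference measure with local product structure (its proof uses recurrence) with respect to which this holds almost everywhere, and you do not have one.

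To close the gap, either supply this construction (Ledrappier's route) or work directly on the boundary in Sullivan's style:
\begin{itemize}
\item by Proposition~\ref{coc}, $c_\mu$ is cohomologous, with bounded transfer function, to $c_F(\gamma,\xi)=B^F_\xi(\gamma^{-1}o,o)$, which has bounded distortion on shadows;
\item this gives a shadow lemma and a density theorem for shadows, hence ergodicity;
\item when $\mu\sim\mu'$, it also gives bounded integrals of $F_\mu-F_{\mu'}$ along $\mu$-a.e.\ ray;
\item such rays are forward dense, so shadowing of closed orbits forces every period of $F_\mu-F_{\mu'}$ to vanish.
\end{itemize}
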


\subsection{  Examples of Quasi-invariant Measures  }
Benoist \cite{B1} has studied the periods for some quasi invariant measures. Let us choose  Riemmannian  metrics  on $\partial\Omega$, and on $\partial \Omega^* $  the boundary of the dual convex  $\Omega^*$, with their corresponding  finite volumes $ \mu_{\partial \Omega}$  and  $ \mu_{\partial \Omega^*}$.  The normal map $n: \partial \Omega \rightarrow \partial\Omega^*$  associates  to each point $\xi \in \partial \Omega$  the tangent hyperplane $ n(\xi) =T_{\xi} \partial\Omega$, which is a point on $\partial \Omega^*$. The normal map is $\G$- equivariant that is $ n =^t\g n\g$ .
 \\
\begin{proposition}\label{relation}
[\cite{B1} Lem 6.5, Cor 6.6]  With the notations above the two finite measures $  \mu_{\partial \Omega}$ and  
$ n^{-1}_* \mu_{\partial \Omega^*} $  on $ \partial \Omega$ are $\G$-quasi-invariant. The periods are given by the relations

\begin{equation}
\label{lebesgue}
 c_{\mu_{\partial\Omega} }(\g,\g^+) = n\log(\lambda_1(\g)) +\log( \lambda_{n+1}(\g))  
\end{equation} \\
 The periods are positive, i.e.,
 for any $\g \neq 1$, $c_{\mu_{\partial\Omega} }(\g,\g^+) > 0$.
\begin{equation}
 \label{lebesgue*}
 c_{n^{-1}_* \mu_{\partial \Omega^*}}(\g,\g^+) = -\log(\lambda_1(g)) - n\log( \lambda_{n+1}(\g))  
 \end{equation}
 The periods are positive, i.e.,
  for any $\g \neq 1$, $c_{n^{-1}_* \mu_{\partial \Omega^*}} (\g,\g^+) > 0$.
 It is interesting also to remark that 
 \begin{equation}
 \label{identity}
 c_{n^{-1}_* \mu_{\partial \Omega^*}}(\g,\g^+)  =  c_{\mu_{\partial\Omega} }(\g^{-1},(\g^{-1})^+).
 \end{equation}

\end{proposition}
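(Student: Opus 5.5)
The plan is to realize both cocycles as logarithms of Jacobians of the projective action restricted to the $C^{1+\kappa}$ hypersurfaces $\partial\Omega$ and $\partial\Omega^*$. Then evaluate them at the attracting fixed point, where the derivative can be read off from the eigenvalues of $\rho(\g)$.

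\textbf{Step 1: quasi-invariance and the cocycle for $\mu_{\partial\Omega}$.} By Benoist, $\partial\Omega$ is a $C^{1+\kappa}$ hypersurface of $P(\mathbb{R}^{n+1})$. Each $\g$ acts on $P(\mathbb{R}^{n+1})$ by a real-analytic diffeomorphism preserving $\partial\Omega$, so its restriction is a $C^{1+\kappa}$ diffeomorphism of $\partial\Omega$. Hence $\g^*\mu_{\partial\Omega}=\mathrm{Jac}(\g)\,\mu_{\partial\Omega}$, where $\mathrm{Jac}(\g)(\xi)$ is the Riemannian Jacobian of $\g|_{\partial\Omega}$ at $\xi$. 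This Jacobian is positive and depends only on $D\g$ and on the tangent plane $T_\xi\partial\Omega$, which varies $C^\kappa$ in $\xi$, so it is H\"older. Setting $c_{\mu_{\partial\Omega}}(\g,\xi)=-\log\mathrm{Jac}(\g)(\xi)$, with signs fixed to match Theorem \ref{quasi-invariant}, the chain rule gives the cocycle identity (1) of Proposition \ref{Prop3.1}.

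\textbf{Step 2: computing the period.} Since $\g$ is bi-proximal, choose a basis $e_1,\dots,e_{n+1}$ adapted to the generalised eigenspaces, with $\g^+=[e_1]$ and $\g^-=[e_{n+1}]$. The tangent hyperplane $n(\g^+)$ is a $\g$-invariant projective hyperplane through $\g^+$ that does not contain $\g^-$; otherwise the segment from $\g^-$ to $\g^+$ would lie in the boundary, contradicting strict convexity. Hence $n(\g^+)$ is spanned by $e_1,\dots,e_n$. Identify $T_{[e_1]}P(\mathbb{R}^{n+1})$ with $\mathrm{Hom}(\mathbb{R}e_1,\mathbb{R}^{n+1}/\mathbb{R}e_1)$. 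The derivative of $\g$ there has eigenvalues of moduli $\lambda_i/\lambda_1$, and its restriction to $T_{\g^+}\partial\Omega$ has moduli $\lambda_i/\lambda_1$ for $2\le i\le n$. Using $\det\g=1$, i.e.\ $\prod_i\lambda_i=1$, the determinant has modulus
$$\prod_{i=2}^{n}\frac{\lambda_i}{\lambda_1}=\frac{1}{\lambda_1^{\,n}\lambda_{n+1}}.$$
Because the Jacobian at a fixed point does not depend on the chosen metric, this gives $c_{\mu_{\partial\Omega}}(\g,\g^+)=n\log\lambda_1+\log\lambda_{n+1}$. Positivity follows from proximality: $\lambda_i\le\lambda_2<\lambda_1$ for $i\ge2$, so every factor is $<1$.

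\textbf{Step 3: the dual measure.} $\Omega^*$ is again strictly convex and divisible by $\G^*=\{{}^t\g^{-1}\}$, so $\partial\Omega^*$ is $C^{1+\kappa}$ and Steps 1--2 apply to it. The normal map $n$ is an equivariant homeomorphism, so $n^{-1}_*\mu_{\partial\Omega^*}$ is quasi-invariant with cocycle $c(\g,\xi)=c_{\mu_{\partial\Omega^*}}({}^t\g^{-1},n(\xi))$. This is H\"older because $n$ is $C^\kappa$. The point $n(\g^+)$ is the line of the covector $e_{n+1}^*$. It is an eigenvector of ${}^t\g^{-1}$ with the top modulus $1/\lambda_{n+1}$, so it is the attracting fixed point of ${}^t\g^{-1}$. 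The eigenvalue moduli of ${}^t\g^{-1}$, in decreasing order, are $1/\lambda_{n+1}\ge\cdots\ge1/\lambda_1$. Plugging these into Step 2 gives $-n\log\lambda_{n+1}-\log\lambda_1$, and positivity follows as before. Finally, the top and bottom eigenvalue moduli of $\g^{-1}$ are $\lambda_1(\g^{-1})=1/\lambda_{n+1}(\g)$ and $\lambda_{n+1}(\g^{-1})=1/\lambda_1(\g)$. Substituting these into \eqref{lebesgue} yields \eqref{identity}.

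\textbf{Main obstacle.} I expect the main difficulty to be regularity together with sign bookkeeping, not the eigenvalue computation. One must check that the Jacobian cocycle is genuinely H\"older on a hypersurface that is only $C^{1+\kappa}$, and that $n^{-1}_*\mu_{\partial\Omega^*}$ inherits a H\"older cocycle even though $n$ is merely $C^\kappa$. One must also track the convention $\g^*\mu=e^{-c}\mu$ consistently, so that the periods come out positive rather than negative.
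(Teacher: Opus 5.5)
Your proposal is correct and follows essentially the same route as the paper: $c_{\mu_{\partial\Omega}}$ is minus the log-Jacobian of $\g|_{\partial\Omega}$, which at $\g^+$ is computed from the eigenvalue moduli $\lambda_i/\lambda_1$, $2\le i\le n$, together with $\prod_{i=1}^{n+1}\lambda_i=1$; the dual measure is handled through the equivariance $n\circ\g={}^t\g^{-1}\circ n$, which reduces it to the same computation for ${}^t\g^{-1}$ acting on $\partial\Omega^*$. Your version also fills in points the paper leaves as ``a simple computation'': why the tangent hyperplane at $\g^+$ is the sum of the non-bottom eigenspaces, and why $n(\g^+)$ is the attracting fixed point of ${}^t\g^{-1}$. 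It also uses the correct normalization $\prod_{i=1}^{n+1}\lambda_i=1$, whereas the paper's displayed identity has $\lambda_n$ in place of $\lambda_{n+1}$.
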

\begin{proof}
 Because these formulas will play an important role in the sequel and for the sake of the reader we give a short proof.
  For (5) we use the fact that the group $\G$ acts by diffeomorphisms on $\partial\Omega$ insuring that  the  volume measure is $\G$-quasi-invariant. To compute the periods we use  (1) in Theorem
  \ref{quasi-invariant}  which gives the relationship to the Jacobian  of the differential of the action $\g$ on the tangent hyperplane $y^+(\g)$ at the fixed point $x^+(\g)$ 
  $$ \log \det(d\g)_{x^+(\g)} = - c (\g, x^+(\g)) $$
  The calculus of this determinant gives
  $$ \det(d\g)_{x^+(\g)} = \prod \limits_{i=1}^{n-1} \frac{ \lambda_{i+1}(\g)}{\lambda_1(\g)}$$ 
  and using $ 1 = \lambda_1(\g)  \lambda_n(\g)  \prod \limits_{i=1}^{n-1} \lambda_{i+1}(\g) $ we get 
   $$ \det(d\g)_{x^+(\g)}= \frac{1}{\lambda_1^n(\g)  \lambda_{n+1}(\g)}.$$
   All the periods are positive because  for $ 2\leq i\leq n$   the proximality property implies that $ \lambda_1(\g) > \lambda_i(\g)$ hence 
   $ \lambda_1^n(\g) \lambda_{n+1} (\g)>1$.
   
   For  \ref{lebesgue*}  we need to evaluate
    $$ \g ^*n^{-1}_* \mu_{\partial \Omega^*} = \g^{-1} _*n^{-1}_* \mu_{\partial \Omega^*} = (n \g)^{-1}_*\mu_{\partial \Omega^*}.$$
 Note that we have replaced the pullback measure by the pushforward measure of the inverse. 
 The equivariance  implies 
 $$ \g ^*n^{-1}_* \mu_{\partial \Omega^*}= (^t \g^{-1} n)^{-1}_*\mu_{\partial \Omega^*} =  n^{-1}_*. ^t \g _*{\mu_{\partial \Omega^*}}.$$ The last statement is a simple computation. The positivity of all the periods is then immediate.\\
 \end{proof}
 As already observed by Benoist \cite{B1}
  \begin{corollary} For a convex divisible set the two measures above have the same null sets if and only if the convex set $\Omega$ is an ellipsoid.
  \end{corollary}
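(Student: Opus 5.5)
Both measures $\mu_{\partial\Omega}$ and $n^{-1}_*\mu_{\partial\Omega^*}$ are $\Gamma$-quasi-invariant H\"older measures on $\partial\Omega$ by Proposition \ref{relation}. By item 3 of Theorem \ref{quasi-invariant}, two such measures are mutually absolutely continuous, hence have the same null sets, if and only if their periods coincide for every $\gamma\in\Gamma$. So the plan is to compare the period formulas (\ref{lebesgue}) and (\ref{lebesgue*}) and to show that they agree for all $\gamma$ exactly when $\Omega$ is an ellipsoid. One remark first: mutual absolute continuity is a priori stronger than equality of null sets, but for quasi-invariant measures with H\"older cocycles the Radon–Nikodym derivative, when it exists, is H\"older and nonvanishing. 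I would apply item 3 in both directions to pass between the two notions.

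Equality of periods means that for every $\gamma$
$$n\log\lambda_1(\gamma)+\log\lambda_{n+1}(\gamma) = -\log\lambda_1(\gamma)-n\log\lambda_{n+1}(\gamma),$$
that is $(n+1)\bigl(\log\lambda_1(\gamma)+\log\lambda_{n+1}(\gamma)\bigr)=0$, or $\lambda_1(\gamma)\lambda_{n+1}(\gamma)=1$ for all $\gamma\in\Gamma$.

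If $\Omega$ is an ellipsoid, then $\Gamma\subset SO(n,1)$, and every element has spectrum $\lambda>1\geq\cdots\geq\lambda^{-1}$. So the identity holds and the periods agree; alternatively, the normal map is then projectively the polarity of a quadric, hence smooth, and it maps Lebesgue class to Lebesgue class directly.

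Conversely, suppose $\lambda_1(\gamma)\lambda_{n+1}(\gamma)=1$ for all $\gamma\in\Gamma$. This says the linear form $l_1+l_{n+1}$ vanishes on the Jordan projections of $\Gamma$. I expect this to be the main obstacle: one has to turn an algebraic constraint on the spectra into rigidity of $\Omega$. I would argue by Zariski density. By the result quoted in the introduction (Benoist), if $\Omega$ is not an ellipsoid then $\Gamma$ is Zariski dense in $SL(n+1,\mathbb{R})$. Benoist's theorem on the limit cone then says the limit cone of a Zariski dense subgroup has nonempty interior in the Weyl chamber. Hence the Jordan projections cannot all lie in the hyperplane $\{l_1+l_{n+1}=0\}$, which is a contradiction. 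Therefore $\Omega$ is an ellipsoid.

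A more elementary fallback is available if one prefers to avoid the limit cone. In that case equality of the two measure classes makes the normal map $n$ absolutely continuous with respect to Lebesgue measure, with H\"older Jacobian. Benoist's argument \cite{B1} then upgrades the boundary regularity, giving a $C^2$ boundary, and a $C^2$ boundary forces the ellipsoid, as recalled in the introduction. I would cite \cite{B1} for this step rather than reprove it.
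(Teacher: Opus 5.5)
Your proposal is correct and follows essentially the paper's own argument: you use Theorem \ref{quasi-invariant}(3) to reduce equality of null sets to equality of the periods (\ref{lebesgue}) and (\ref{lebesgue*}), i.e.\ to $\lambda_1(\gamma)\lambda_{n+1}(\gamma)=1$ for all $\gamma$; this holds in $SO(n,1)$, and otherwise it is excluded because the limit cone of the Zariski dense group $\Gamma$ has nonempty interior (Benoist) and so cannot lie in the hyperplane $l_1+l_{n+1}=0$. Two minor points: mutual absolute continuity is by definition the same as having the same null sets, so nothing needs to be upgraded there; and your regularity ``fallback'' is unnecessary and too vague to stand on its own.
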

\begin{proof}Theorem \ref{quasi-invariant} (3) states that the equality of the period is equivalent to having the same null sets. If this is the case  for any $\g \in \G$ we have from the previous proposition 
  $$ \log(\lambda_1(\g)) + \log( \lambda_{n+1}(\g)) = 0  ,$$
  which contradicts the Zariski density of $\G$. In more details, if the convex set is an ellipsoid, then the holonomy representation is conjugate into $SO(n,1)$ in which any hyperbolic isometry has eigenvalues $\lambda, 1,\cdots,1,\lambda^{-1}$. Then it obviously satisfy the above equation.  If the convex set is not an ellipsoid, then its holonomy representation is Zariski dense in $SL(n+1,\mathbb R)$. By Benoist \cite{BGafa}, the limit cone, which is the closure of the cone generated by $(\log(\lambda_1(\g)),\cdots,\log(\lambda_{n++1}(\g)))_{\g\in\Gamma}$, has nonempty interior in $\mathfrak a^+$ where $\mathfrak a^+$ is a fixed positive Weyl chamber of some Iwasawa decompsotion of $SL(n+1,\mathbb R)=KAN$. But if $ \log(\lambda_1(\g)) + \log( \lambda_{n+1}(\g)) = 0 $ is satisfied, it cannot have nonempty interior.
 \end{proof}
 
 \subsection{ Family of equivariant measures on the boundary}
 We again follow \cite{Le}.
 \begin{definition} \label{equivmeasure1} An equivariant family of measures on the boundary $\partial \Omega $ is a map
 $ x\rightarrow \mu_x$ from $\Omega $ to the set of Radon measures on $\partial \Omega $ such that
 \begin{itemize}
 \item for any $\g \in \G$ , $\mu_{\g x}= \g_* \mu_x$ ,
 \item for any $x $,  $y$ in $\Omega$, the measures $\mu_x$ and $\mu_y$ have the same negligible sets and there exists a positive function $k_x$ on $\Omega \times \partial \Omega$
 of class  $C^{1+\kappa}_s$ such that
$$  \frac {d\mu_y}{d\mu_x}(\xi) = k_x (y,\xi ) \ ,\  \mu_x   - a.e. $$
\end{itemize}
\end{definition}
\begin{proposition} [\cite{Le} Proposition 1]
 There exists a one to one correspondance between classes of family of equivariant measures on the boundary and classes of quasi-invariant measures with the same periods.
\end{proposition}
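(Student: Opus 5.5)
The plan is to construct the two maps explicitly and check that they are mutually inverse on classes, following Ledrappier's argument.

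\emph{From a family to a quasi-invariant measure.} Given an equivariant family $x\mapsto\mu_x$, fix the base point $o\in\Omega$ and set $\mu=\mu_o$. Equivariance gives $\gamma_*\mu_o=\mu_{\gamma o}$, and the second axiom gives
\[
\frac{d\gamma^{-1}_*\mu_o}{d\mu_o}(\xi)=k_o(\gamma^{-1}o,\xi).
\]
Hence $\mu_o$ is $\Gamma$-quasi-invariant with cocycle
\[
c_\mu(\gamma,\xi)=-\log k_o(\gamma^{-1}o,\xi).
\]
Two facts need checking.
\begin{itemize}
\item The cocycle identity follows from the chain rule for Radon--Nikodym derivatives, $k_x(z,\xi)=k_x(y,\xi)\,k_y(z,\xi)$, combined with equivariance.
\item H\"older continuity in $\xi$ is exactly the $C^{1+\kappa}_s$ hypothesis on $k$.
\end{itemize}
Changing the base point $o$ to $o'$ multiplies $\mu$ by the H\"older density $k_o(o',\cdot)$. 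So it changes $c_\mu$ only by a coboundary and leaves the periods unchanged. The same holds if the family is replaced by an equivalent one.

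\emph{From a quasi-invariant measure to a family.} Given $\mu$ with H\"older cocycle $c_\mu$, I define $\mu_x=e^{-F(x,\xi)}\mu$, where $F:\Omega\times\partial\Omega\to\mathbb R$ should satisfy:
\begin{itemize}
\item $F(o,\cdot)=0$;
\item the twisted equivariance $F(\gamma x,\gamma\xi)=F(x,\xi)+c_\mu(\gamma,\xi)$, up to the sign convention of Theorem \ref{quasi-invariant}, so that $\mu_{\gamma x}=\gamma_*\mu_x$;
\item $F\in C^{1+\kappa}_s$.
\end{itemize}
To build $F$, I will use a Busemann-type construction. Take a fundamental domain $D$ and a smooth partition of unity $\{\psi_\gamma\}$ on $\Omega$ subordinate to the translates $\gamma D$, with $\psi_{\gamma}\circ\gamma'=\psi_{\gamma'^{-1}\gamma}$. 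Then set
\[
F(x,\xi)=\sum_\gamma \psi_\gamma(x)\,c_\mu(\gamma,\gamma^{-1}\xi).
\]
The sum is locally finite, so $F$ is smooth in $x$ and H\"older in $\xi$, hence $C^{1+\kappa}_s$ (indeed $C^\infty$ in $x$). The cocycle identity (1) of Proposition \ref{Prop3.1} then gives the equivariance. The final normalization is to subtract $F(o,\xi)$, which changes $c_\mu$ by a coboundary only. With this $F$, the density $k_x(y,\xi)=e^{F(x,\xi)-F(y,\xi)}$ is positive and of class $C^{1+\kappa}_s$.

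\emph{Classes and bijectivity.} I take two quasi-invariant measures to be in the same class when they have the same periods, equivalently (Theorem \ref{quasi-invariant}(3)) when they are mutually absolutely continuous. I take two families to be in the same class when $\mu'_x=e^{U(\xi)}\mu_x$ for a H\"older $U$, up to the correspondence. Both constructions respect these classes:
\begin{itemize}
\item Cohomologous cocycles give families differing by $e^{U}$.
\item Conversely, families in the same class give cocycles with equal periods, by item (3) of Proposition \ref{Prop3.1}.
\end{itemize}
Composing the two maps returns $\mu_o=\mu$ on one side and the original family up to class on the other. Injectivity on classes rests on Proposition \ref{Prop3.1}(4) (equal periods implies cohomologous). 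I expect the main obstacle to be constructing $F$ with the required regularity and equivariance simultaneously. The averaging/partition-of-unity trick above is the step I would check most carefully, in particular the compatibility of the sign conventions between pullback and pushforward.
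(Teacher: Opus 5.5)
Your two constructions are sound, but the equivalence relation you put on families is wrong, and the bijectivity part fails with it. The passage from a family to $\mu_o$, with $c(\gamma,\xi)=-\log k_o(\gamma^{-1}o,\xi)$, is exactly what the paper does. Your partition-of-unity family is a legitimate alternative to Ledrappier's $\mu_x=\sum_\gamma\rho(d_\Omega(x,\gamma p))\gamma_*\mu$, which the paper quotes without proof.

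The problem with the relation is this. Suppose $\{\mu_x\}$ and $\{e^{U}\mu_x\}$ are both equivariant, with $U$ depending on $\xi$ only. Then $e^{U}\mu_{\gamma x}=\gamma_*(e^{U}\mu_x)=e^{U\circ\gamma^{-1}}\mu_{\gamma x}$. Since the $\mu_x$ have full support, the continuous function $U$ is $\Gamma$-invariant, hence constant by minimality of the action on $\partial\Omega$. Your classes of families are therefore just families up to a scalar, and the map to period classes is not injective.

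Concretely:
\begin{itemize}
\item Replacing $\mu$ by $e^{W}\mu$ (same periods) changes your family by the density $e^{V(x,\xi)}$, with $V(x,\xi)=\sum_\gamma\psi_\gamma(x)W(\gamma^{-1}\xi)$. This is not a function of $\xi$ alone unless $W$ is constant.
\item Likewise, family $\to\mu_o\to$ family changes $\mu_x$ by $e^{S(x,\xi)-\log k_o(x,\xi)}$, which in general depends on $x$.
\end{itemize}
So ``cohomologous cocycles give families differing by $e^U$'' and ``composing returns the original family up to class'' are false as stated. Proposition \ref{Prop3.1}(4) cannot give injectivity for this relation.

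The fix is to call two families equivalent when $\mu_x$ and $\mu'_x$ have the same null sets (for one, hence every, $x$). Then $V(x,\xi)=\log\frac{d\mu'_x}{d\mu_x}(\xi)$ is automatically $\Gamma$-invariant: $V(\gamma x,\gamma\xi)=V(x,\xi)$. Both densities above are of this type. In the second case this holds because $S$ and $\log k_o$ both satisfy $H(\gamma x,\gamma\xi)=H(x,\xi)+c(\gamma,\xi)$. Well-definedness and injectivity then come from Theorem \ref{quasi-invariant}(3) (same null sets iff same periods). The real content of the proposition is your two constructions.

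Smaller points:
\begin{itemize}
\item With the paper's convention $\gamma^{-1}_*\mu=e^{-c_\mu(\gamma,\cdot)}\mu$, your $S(x,\xi)=\sum_\gamma\psi_\gamma(x)c_\mu(\gamma,\gamma^{-1}\xi)$ satisfies $S(\gamma x,\gamma\xi)=S(x,\xi)+c_\mu(\gamma,\xi)$. The equivariant family is therefore $\mu_x=e^{+S(x,\cdot)}\mu$.
\item Subtracting $S(o,\cdot)$ while keeping $\mu$ breaks equivariance. Instead, choose $\psi$ with $\psi_{\mathrm{id}}(o)=1$ (possible since $\Gamma$ acts freely), or simply note that $\mu_o=e^{S(o,\cdot)}\mu$ lies in the class of $\mu$.
\item The partition must be subordinate to translates of an open, relatively compact neighbourhood of $D$, not to the $\gamma D$ themselves. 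For example, take $\psi_\gamma=\psi\circ\gamma^{-1}/\sum_\eta\psi\circ\eta^{-1}$.
\end{itemize}

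Compared with the paper's additive sum of pushforwards, which is equivariant by reindexing alone, your multiplicative average needs $\sum_\gamma\psi_\gamma=1$ and the cocycle identity. In exchange it gives $C^\infty$ dependence on $x$, since it never uses the Hilbert distance.
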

Let us give without proofs the construction in \cite{Le}.
Given $\mu$  a quasi-invariant measure on the boundary, one chooses a point $p \in \Omega$ and a compactly supported $C^{\infty} $-real positive function $\rho$ on $\R$  such that $\rho(0)= 1$, $\rho'(0) = \rho''(0) =0  $, $\rho( t) > 1/2 $ if $ \vert t\vert < 2\ \underset{x \in \Omega}{\sup}\ d_\Omega(x, \G p)$, and we set
\begin{equation}
\label{equivmeasure}
 \mu_x  = \underset{\g \in \G}{\sum} \rho (d_\Omega(x, \g p))\g_*\mu
\end{equation} 
It is proved in \cite{Le} that this family of measures on the boundary is equivariant and its associated cocycle in (\ref{cf}) is cohomologous to the cocycle associated to $ \mu$.
 
\begin{example}
We can use (\ref{lebesgue}) and (\ref{lebesgue*}) to obtain two families of equivariant measures $ \mu_x^L$ and $\mu_x^{L*}$ having the same periods  respectively as in (\ref{lebesgue}) and  (\ref{lebesgue*}).
We see that all the measures in the first equivariant family  is also in the Lebesgue measure class and the second family is  the pullback of one family in the Lebesgue measure class on $\partial \Omega^*$.

\end{example}

Conversely to any family of equivariant measures on the boundary one associates a cohomology class  of H\"older cocycles. This is possible because for any $p \in \Omega$ the measure $\mu_p$ is a quasi-invariant measure on the boundary. The first item of Definition \ref{equivmeasure1} $ \g_* \mu_x =\mu_{\g x}$ and the second item assert that 
$\mu_p$ and $\mu_{\g^{-1}p}$ have the same null sets. Hence the class of the  H\"older cocycle defined below does not depend on the choice of $p$
\begin{equation} 
\label{cf}
 c( \g , \xi) = - \log k_p (\g ^{-1} p, \xi)
\end{equation}
since two measures $\mu_p$ and $\mu_q$ are absolutely continuous for any pair $p$ and $q$, hence the two H\"older cocycles $c_p(\gamma,\xi)$ and $c_q(\gamma,\xi)$ have the same periods by Theorem \ref{quasi-invariant} (3), and in turn by Proposition \ref{Prop3.1} (4), two cocyles are cohomologous. We will denote this cocyle by $c_\mu$.
Thanks to \cite{Le} [ Thm 1.d] two families of equivariant-measures on the boundary have the same null sets if and only if their cocycles have the same periods. \\

\subsection{Associated Potential}\label{pot}
Before giving their associated transverse measures, we need to relate these measures to functions defined on $HM$. There is a classical way to do it, and we may use Equation (\ref{equivmeasure}) and remark that 

\begin{equation}
\label{equivfunction}
d\mu_x = G(x , \xi) d\mu 
\end{equation}
with
\begin{equation}
G_{\mu}(x , \xi)  = \underset{\g \in \G}{\sum} \rho (d_\Omega(x, \g p)) e^{-c_{\mu}(\g^{-1}, \ \xi)}
\end{equation} 
which is a $ C^{1+ \kappa}_s$ function on $\Omega \times \partial \Omega$. Under the action of $\Gamma$, the function is changed according to the rule

\begin{equation}
\label{equivrel}
G(\g x , \g\xi)  = G(x,\xi) e^{-c_{\mu}( \g,\xi)}.
\end{equation} 
We can get a family of exact H\"older one forms on $\Omega $ by defining 
\begin{equation}
 \alpha_{ \xi}= -d^  {\Omega}\log G_{\xi},
\end{equation} 
where the derivative is taken along $\Omega $.
Then it is easy to see that
$$\g^*  \alpha_{ \g \xi}   = \alpha_{ \xi}.$$  
Then we introduce a H\"older function $F_{\mu}$ associated to the measure $\mu $ by

\begin{equation}
\label{ctoF}
 F_{\mu} (x,\xi) = \alpha_{\xi}( X(x,\xi)),
\end{equation}  where $X(x,\xi)$ is the flow vector field at $x$ in the direction of $\xi$.
From above, $F_{\mu}$ is $\Gamma $-invariant and thus descends to the manifold $HM$. 
Using (\ref{equivfunction})  and (\ref{ctoF}) it is easy to observe that for $(x_t, \xi)= \phi_t (x,\xi)$ one has 
\begin{equation}
\label{varmeasure}
\frac {d \mu_{x_t}}{d \mu_x} (\xi) = e^{- \int_0^t F_{\mu}(x_s, \xi) ds}.
\end{equation}
With the usual notations for the periods of the function  and using \ref{equivrel} a simple computation  shows
\begin{equation}
\label{periods}
 \int_{\g} F_{\mu} =  \int_{\g} -i_X d^  {\Omega}\log G_{\xi}= \log \frac {G_{\mu}(x,\g^+)}{G_{\mu}(\g x,\g^+)}= c_{\mu}(\g , \g^+).
\end{equation}

\subsection{Pressure of a function}
Let us recall  the definition of the pressure of a continuous function (\cite{Bo}, \cite{Bo-Ru}) and specialize it to our setting. 
\begin{definition} Let $\ell(\g) = 1/2( \log\lambda_1(\g) -\log\lambda_{n+1}(\g))$ be the Hilbert length of the closed geodesic freely homotopic to $\g$. Following Gurevich,
 the pressure of a continuous function F defined on $HM$ is given by

\begin{equation} 
 \label{P}
P(F) = \limsup_{T \rightarrow  +\infty}  \frac {1 } {T} \log   \underset{[\g] \in [\G] ,T-c <\ell( \g )< T}{\sum} e^{ \int_{\g} F}
=  \limsup_{T \rightarrow  +\infty}  \frac {1 } {T} \log Z(F,T,c)
\end{equation}
where $c$ is a constant sufficiently large.
\end{definition}
\begin{remark}
The pressure of the null function is just the topological entropy in our case. 
Two cohomologous functions have the same pressure which depends only on the periods.
 \end{remark}
\noindent {\bf Examples}:
Using (\ref{periods}) we may express the pressure of the functions $ F_{\mu_{\partial \Omega}}$ and  $F_{n^{-1}_* \mu_{\partial \Omega^*}}$ associated with our examples. \\ Before doing this for these two measures let us remark that  all periods are positive. And from 
(\ref{identity}) we immediately obtain
\begin{equation}
P( - F_{\mu_{\partial \Omega}})=P(- F_{n^{-1}_* \mu_{\partial \Omega^*}})
\end{equation}
But thanks to Ledrappier we can go further.

\subsection{From an equivariant family to a transverse measure} 
\label{transverse}
A family of equivariant measures on the boundary produces  a  transverse measure either to the stable or the unstable foliation.
To this end, let us recall that for any $x\in M $ the fibers $H_xM$ is transverse to both foliations.  The family of measures defined for each $p \in \Omega$  on the fiber $H_p\Omega$ by
\begin{equation}
\nu_p = (\tau_p^{-1})_* \mu_p
\end {equation}
can be pushed down to a family of measures well defined on the fibers $H_xM$. Using 
$ \g\circ \tau_p = \tau_{\g p} \circ \g$ and the equivariance, we have
\begin{equation}
\nu_{\g p} = \g_* \nu_p
\end {equation}
Again using (\ref{equivmeasure}) and  the fact that through point $z\in HM$ of a stable leaf $W^s_z$ there is a transverse fiber, we may use the usual construction to get  a $C^{1+\kappa}_s $ transverse measure to the stable foliation on $HM$.
This gives us a family of measures of class $C^{1+\kappa}$ on the strong unstable leaves. For instance in our first example the transverse measure is in the Lebesgue class with H\"older densities. \\


%
  \section{ SRB measure} \label{srb}
In this section,  we use the two measures (\ref{lebesgue}), (\ref{lebesgue*}) to produce an invariant measure on $HM$. This measure will have the properties of the SRB measure. It would have been tempting to do this  for all Gibbs measures but we cannot a priori apply all the formalism of Ledrappier because the Hilbert flow is not Riemannain and also because of a weaker  regularity.
Our strategy will be to produce a transverse measure to the unstable foliation from (\ref{lebesgue*})  and then show that, at the level of the periods,
that measure compensates those of the transverse measure to the stable foliation deduced from (\ref{lebesgue}). Then we use the Livsic homological theorem to find an invariant measure.\\
\subsubsection{ SRB transverse measure to the stable foliation}
Following Section \ref{transverse} we can now produce  a  family of transverse measures to the stable foliation using $\mu_{\partial\Omega}$. \\
As already observed, all the measures on the fibers are in the Lebesgue class with H\"older densities. This  of course remains true  for the induced measures on the strong unstable leaves. For $z \in HM$ we will denote  by $\nu^{u,L}_z$  the measure induced on the unstable leave $W^u_z$. All these measures are in the same measure class with H\"older densities. Remember that the holonomy of the stable foliation is $C^{1+\kappa}$. 
From (\ref{varmeasure})  
 we have 
\begin{equation}
\label{varmeasureL}
\frac{d{\phi_{-t }}_* \nu^{u,L}_{\phi_t(z)}} {d \nu^{u,L}_z}(z) = e^{- \int_0^t F_{\nu^{u,L}}(\phi_s(z)) ds}
\end{equation} 



\begin{remark}We could also, as usual, have chosen a Riemannian metric on $HM$ and have considered this induced metric on the strong unstable leaves and considered, as usual, the function $f(v) =  \frac{d}{dt} \log det (d\phi_t)_{\vert E^u}$ computed with respect to the corresponding  volume. Because these two family of measures have the same negligible sets the functions $ F_{\mu_{\partial\Omega}}$and $f$ have the same periods  according to Theorem \ref{quasi-invariant} (3) and (\ref{periods}).
\end{remark}

\subsubsection{ SRB transverse measure to the unstable foliation}

Using  now the quasi-invariant measure $n^{-1}_* \mu_{\partial \Omega^*}$, we similarly produce another transverse measure to the stable foliation. We denote by $\nu^{u,L^*}_z$ the measure induced on the unstable leave $ W^u_z$ . 

To get a transverse measure to the unstable foliation we use the $ \sigma $ antipodal map  such that for any $z = (x,[v]) \in HM$ we have
$ \sigma(x,[v])  =(x,[-v]) $ and take advantage of the fact that the symmetry of the Hilbert metric implies that the geodesic flow is reversible.
We may sum up the classical properties we will use;
\begin{enumerate}
\item  $\sigma_* X = - X$
\item $\sigma \circ \phi_t  \circ \sigma = \phi^{-1}_t= \phi_{-t}$
\item $ d \sigma E^s_z =E^u_{\sigma (z)}$
\item For any $z =(x, [v])= (x,\xi)$ , $\sigma (x,\xi)= (x,-\xi)$
\item $ \sigma W^s_z =W^u_{\sigma (z)}$
\item 
If $ (x_t, x^+([v])) = \phi_t (z)$ 
is an orbit of the flow, then $ \sigma (x_t, x^+([v])) = (x_t, x^+([-v])) =(x_t, x^-([v]))$ is an orbit of $-X$ which project down to the same unoriented geodesic. 
\end{enumerate}
Now we define the transverse measure to the unstable foliation as a family of measures on the strong stable leaves according to 
\begin{equation}
 \nu^{s,L^*}_{z} = \sigma_* \nu^{u,L^*}_{{\sigma (z)}}.
\end{equation}
That is for any borel set $B \subset W^s_z$,  $ \nu^{s,L^*}_{z}(B) =   \nu^{u,L^*}_{\sigma (z)}(\sigma^{-1}(B))$.
Now we may remark that using the reversibility of the flow one gets
\begin{align}
\frac{d{\phi_{-t }}_* \nu^{s,L^*}_{\phi_t(z)}} {d \nu^{s,L^*}_z}(z) & =   \frac {d {\phi_{-t }}_*\sigma_* \nu^{u,L^*}_{\sigma \phi_t(z)}}{d \sigma_*\nu^{u,L^*}_{\sigma(z)}} (z) \\
& =  \frac {d {\sigma_*\phi_{t }}_* \nu^{u,L^*}_{\phi_{-t}\sigma (z)}}{d \sigma_*\nu^{u,L^*}_{\sigma(z)}} (z) 
\end{align} 
We can use the fact that if two measures $ \mu_1$ and $\mu_2$ satisfy 
$d\mu_2 = f d\mu_1$ and $ \psi$ is a diffeomorphism then $ d \psi_* \mu_2= f\circ \psi^{-1}   \ \psi_* d\mu_1$
and we apply it with $\sigma$ to the pair of measures 
${\phi_{t }}_* \nu^{u,L^*}_{\phi_{-t}(\sigma (z))}$  and $\nu^{u,L^*}_{\sigma(z)}$.
 Then  from (\ref{varmeasure})
\begin{equation}
\label{evol*}
\begin{split}
\frac{d{\phi_{-t }}_* \nu^{s,L^*}_{\phi_t(z)}} {d \nu^{s,L^*}_z}(z)   & = 
\frac {d {\phi_{t }}_* \nu^{u,L^*}_{\phi_{-t}(\sigma (z))}}{d \nu^{u,L^*}_{\sigma(z)}} (\sigma(z)) \\
& = 
e^{- \int_0^{-t }F_{\nu^{u,L^*}}(\phi_{s}(\sigma(z))) ds}\\
& = e^{ \int_{-t}^0F_{\nu^{u,L^*}}(\phi_{s}(\sigma(z))) ds}
\end{split}
\end{equation} 
Then by (\ref{periods}), and  Proposition \ref{relation} we obtain
\begin{lemma} Let $\g \in \G$ and $z$ a point on the corresponding  periodic orbit of length $T = \ell(\g)  $. Then
\begin{equation}
\frac{d{\phi_{-T }}_* \nu^{u,L}_{\phi_T(z)}} {d \nu^{u,L}_z}(z) = e^{ -c_{ \mu_{\partial \Omega}} (\g, \ x^+(\g))}
\end{equation}
\begin{equation}\label{unst}
\frac{d{\phi_{-T}}_* \nu^{s,L^*}_{\phi_T(z)}} {d \nu^{s,L^*}_z}(z)  = e^{ c_{n^{-1}_* \mu_{\partial \Omega^*}}(\g^{-1} ,\ x^+( \g^{-1}))} 
=  e^{c_{ \mu_{\partial \Omega}} (\g, \ x^+(\g))}
\end{equation}
\end{lemma}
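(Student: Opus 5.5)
The plan is to evaluate the two variation formulas, (\ref{varmeasureL}) and (\ref{evol*}), along the closed orbit. Each exponent is then identified with a period of a potential through (\ref{periods}). Finally, Proposition \ref{relation}, in particular the identity (\ref{identity}), converts the dual-boundary periods into periods of $\mu_{\partial\Omega}$.

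\textbf{First formula.} Take $z=(x,\g^+)$ with $x$ on the axis of $\g$, so that $\phi_T(z)=\g z$ in $H\Omega$, and this projects to $z$ in $HM$. Formula (\ref{varmeasureL}) gives the Radon--Nikodym derivative as $e^{-\int_0^T F_{\nu^{u,L}}(\phi_s z)\,ds}$. The integral is the period $\int_\g F_{\mu_{\partial\Omega}}$, since the potential attached to $\nu^{u,L}$ is $F_{\mu_{\partial\Omega}}$ by the construction of Section \ref{transverse}. By (\ref{periods}) this period equals $c_{\mu_{\partial\Omega}}(\g,\g^+)$, which gives the first identity.

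\textbf{Second formula, first equality.} Start from (\ref{evol*}) with $t=T$. The exponent is $\int_{-T}^0 F_{\nu^{u,L^*}}(\phi_s(\sigma z))\,ds$. Write $\sigma z=(x,\g^-)$. Its orbit under the flow is the closed orbit traversed in the reverse direction, which is the periodic orbit associated with $\g^{-1}$, whose attracting point is $(\g^{-1})^+=\g^-$. Its length is $\ell(\g^{-1})=\ell(\g)=T$. Integrating over $[-T,0]$ instead of $[0,T]$ along a periodic orbit of period $T$ gives the same full-period integral, by invariance of the integral under time shifts along a closed orbit. Applying (\ref{periods}) to the potential $F_{n^{-1}_*\mu_{\partial\Omega^*}}$ and the element $\g^{-1}$ then gives $c_{n^{-1}_*\mu_{\partial\Omega^*}}(\g^{-1},x^+(\g^{-1}))$. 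This is the first equality in (\ref{unst}), with the positive sign coming from the sign flip already recorded in (\ref{evol*}).

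\textbf{Second formula, second equality.} Apply (\ref{identity}) with $\g$ replaced by $\g^{-1}$. This gives $c_{n^{-1}_*\mu_{\partial\Omega^*}}(\g^{-1},(\g^{-1})^+)=c_{\mu_{\partial\Omega}}(\g,\g^+)$. As a check via the explicit formulas: $\lambda_1(\g^{-1})=\lambda_{n+1}(\g)^{-1}$ and $\lambda_{n+1}(\g^{-1})=\lambda_1(\g)^{-1}$, so (\ref{lebesgue*}) yields $\log\lambda_{n+1}(\g)+n\log\lambda_1(\g)$, which is (\ref{lebesgue}).

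\textbf{Main obstacle.} The delicate point is the bookkeeping in the second formula. One must check that the lift of $\sigma$ applied to the $\g$-orbit really is the periodic orbit of $\g^{-1}$ with the correct base point and deck transformation, namely $\phi_{-T}(\sigma z)=\g^{-1}\sigma z$. One must also check that the sign conventions in (\ref{evol*}) and (\ref{periods}) combine to produce $+c$ rather than $-c$. I would verify this by writing the deck relation $\sigma\g=\g\sigma$ explicitly in Hopf coordinates.
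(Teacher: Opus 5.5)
Your proposal is correct and follows the paper's route: you evaluate (\ref{varmeasureL}) and (\ref{evol*}) at $t=T$, identify the exponents as periods via (\ref{periods}), using that the reversed orbit through $\sigma z$ is the periodic orbit of $\gamma^{-1}$, and then convert with (\ref{identity}) applied to $\gamma^{-1}$. One small correction to the deck relation in your ``main obstacle'' remark: since $\sigma z=(x,\gamma^-)$ with $x$ on the axis, one has $\phi_{T}(\sigma z)=(\gamma^{-1}x,\gamma^-)=\gamma^{-1}\sigma z$, i.e.\ $\phi_{-T}(\sigma z)=\gamma\,\sigma z$ rather than $\gamma^{-1}\sigma z$, but this does not affect the full-period integral or your conclusion.
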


Furthermore we have by comparing with (\ref{unst})
\begin{lemma}
The periods of $F_{\nu^{u,L^*}}$ are the same as those of  $ F_{\nu^{u,L}} \circ \sigma$
\begin{equation}
\int_0^{T }F_{\nu^{u,L}} \circ \sigma (\phi_{s}(z)) ds 
=  c_{ \mu_{\partial \Omega}} (\g^{-1}, \g^-)
\end{equation} 
\end{lemma}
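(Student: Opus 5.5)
The plan is to compute the period of $F_{\nu^{u,L}}\circ\sigma$ along the periodic orbit directly. We reduce it, via the reversibility properties of $\sigma$ listed above, to a period of $F_{\nu^{u,L}}$ along the reversed orbit. Then we identify the result with $c_{n^{-1}_*\mu_{\partial\Omega^*}}(\g,\g^+)$ using the identity (\ref{identity}) of Proposition \ref{relation}.

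\textbf{Step 1: move $\sigma$ off the integrand.} Let $z$ lie on the periodic orbit of $\g$, of length $T=\ell(\g)$. By property (2), $\sigma\circ\phi_s=\phi_{-s}\circ\sigma$, so
$$\int_0^T F_{\nu^{u,L}}(\sigma\phi_s(z))\,ds=\int_0^T F_{\nu^{u,L}}(\phi_{-s}(\sigma z))\,ds=\int_{-T}^0 F_{\nu^{u,L}}(\phi_s(\sigma z))\,ds .$$
By property (6), the point $\sigma(z)$ lies on a periodic orbit of the same length $T$. This orbit runs along the same unoriented geodesic axis from $\g^+$ to $\g^-$, so it is the closed orbit associated with $\g^{-1}$. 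By periodicity, the last integral is the full period $\int_{\g^{-1}}F_{\nu^{u,L}}$.

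\textbf{Step 2: evaluate the period.} Apply (\ref{periods}) to the measure $\mu_{\partial\Omega}$, whose associated potential is $F_{\nu^{u,L}}$ by construction in Section \ref{transverse}. For the element $\g^{-1}$ with attracting point $(\g^{-1})^+=\g^-$, this gives
$$\int_{\g^{-1}} F_{\nu^{u,L}}=c_{\mu_{\partial\Omega}}(\g^{-1},\g^-),$$
which is the displayed formula of the statement. Equivalently, one can read the same quantity off the derivative computation (\ref{evol*}) at $t=T$, which is how the preceding lemma was obtained.

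\textbf{Step 3: compare with $F_{\nu^{u,L^*}}$.} By (\ref{periods}) applied to $n^{-1}_*\mu_{\partial\Omega^*}$, we have $\int_\g F_{\nu^{u,L^*}}=c_{n^{-1}_*\mu_{\partial\Omega^*}}(\g,\g^+)$. By (\ref{identity}) this equals $c_{\mu_{\partial\Omega}}(\g^{-1},(\g^{-1})^+)$, the value found in Step 2. Since $\g$ is arbitrary, the two functions have the same periods. One can also check this against the explicit formulas (\ref{lebesgue}) and (\ref{lebesgue*}), using $\lambda_1(\g^{-1})=\lambda_{n+1}(\g)^{-1}$ and $\lambda_{n+1}(\g^{-1})=\lambda_1(\g)^{-1}$; both periods equal $-\log\lambda_1(\g)-n\log\lambda_{n+1}(\g)$.

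\textbf{Main obstacle.} The delicate point is Step 1: justifying that $\sigma$ maps the periodic orbit of $\g$ onto that of $\g^{-1}$ with the correct orientation and starting point. In particular, one must check that the equivariance $\g\sigma=\sigma\g$ on $H\Omega$ makes $\sigma(z)$ a lift of a point fixed by $\g^{-1}$ after time $T$, rather than by $\g$. I would verify this in Hopf coordinates. The map $\sigma$ sends $(x^-,x^+,\tau)$ to $(x^+,x^-,\cdot)$. Since $\phi_T(\tilde z)=\g\tilde z$, we get $\phi_{-T}(\sigma\tilde z)=\g\sigma\tilde z$, that is, $\phi_T(\sigma\tilde z)=\g^{-1}\sigma\tilde z$, which settles the sign. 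The cocycle sign conventions of the earlier remark then confirm consistency, since $c(\g^{-1},\g^-)=-c(\g,\g^-)$.
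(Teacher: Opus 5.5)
Your argument is correct and is essentially the paper's proof. You use $\sigma\circ\phi_s=\phi_{-s}\circ\sigma$ to rewrite the integral along the $\sigma$-image of the orbit, identify that image as the closed orbit of $\g^{-1}$ (your Hopf-coordinate check $\phi_T(\sigma\tilde z)=\g^{-1}\sigma\tilde z$ makes the paper's one-line orientation claim precise), and apply (\ref{periods}); your Step 3 spells out via (\ref{identity}) the comparison with $F_{\nu^{u,L^*}}$ that the paper leaves as ``comparing with (\ref{unst})''.

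One slip, not used anywhere in your proof: the aside that the same value can be read off (\ref{evol*}) at $t=T$ is wrong. (\ref{evol*}) involves $F_{\nu^{u,L^*}}$ and yields $c_{\mu_{\partial\Omega}}(\g,\g^+)$, not $c_{\mu_{\partial\Omega}}(\g^{-1},\g^-)$.
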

\begin{proof}
$$\int_0^{T }F_{\nu^{u,L}} \circ \sigma (\phi_{s}(z)) ds  =
\int_0^{T } F_{\nu^{u,L}} \circ \phi_{-s} ( \sigma (z)) ds $$
For a $z=(\gamma(0), \gamma'(0))$, $\phi_{-s}(\sigma(z)),   0\leq s\leq T$ travels along $\gamma^{-1}$ toward $-\gamma'(0)$ direction, hence
the integration is performed  on the image by $\sigma$ of the periodic orbit $\g$, i.e $ \g^{-1}$.
\end{proof}
\begin{proposition} 
\label{livsic}
There exists a  H\"older function $ U : HM \rightarrow \R$ differentiable along the flow
such that 
$$F_{\nu^{u,L^*}} =  F_{\nu^{u,L}}\circ \sigma + X U$$

\end{proposition}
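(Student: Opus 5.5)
The plan is to apply the Livšic theorem for the transitive Anosov flow $\phi_t$ on $HM$ to the H\"older function
$$\Psi = F_{\nu^{u,L^*}} - F_{\nu^{u,L}}\circ\sigma .$$
Livšic's theorem says that a H\"older function whose integral over every periodic orbit vanishes is a coboundary $XU$. Here $U$ is H\"older and differentiable along the flow. So the whole proof reduces to checking that $F_{\nu^{u,L^*}}$ and $F_{\nu^{u,L}}\circ\sigma$ have the same periods.

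First, regularity. By construction in Section \ref{pot}, $F_\mu(x,\xi)=\alpha_\xi(X(x,\xi))$, where $G_\mu$ is $C^{1+\kappa}_s$. Hence $F_\mu$ is H\"older on $H\Omega$, and by $\Gamma$-invariance it descends to a H\"older function on $HM$. Since $\sigma$ is smooth, $F_{\nu^{u,L}}\circ\sigma$ is also H\"older, and so $\Psi$ is H\"older.

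Next, periods. Each periodic orbit of $\phi_t$ corresponds to a conjugacy class $[\g]$ with period $T=\ell(\g)$. By (\ref{periods}) applied to $n^{-1}_*\mu_{\partial\Omega^*}$,
$$\int_\g F_{\nu^{u,L^*}} = c_{n^{-1}_*\mu_{\partial\Omega^*}}(\g,\g^+).$$
By the identity (\ref{identity}) of Proposition \ref{relation}, this equals $c_{\mu_{\partial\Omega}}(\g^{-1},\g^-)$. On the other hand, the preceding lemma gives
$$\int_0^T F_{\nu^{u,L}}\circ\sigma(\phi_s z)\,ds = c_{\mu_{\partial\Omega}}(\g^{-1},\g^-).$$
Indeed, by reversibility $\sigma\circ\phi_s=\phi_{-s}\circ\sigma$, so this integral is the period of $F_{\nu^{u,L}}$ along the orbit of $\g^{-1}$, and (\ref{periods}) then gives $c_{\mu_{\partial\Omega}}(\g^{-1},(\g^{-1})^+)$. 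The two periods agree, so $\Psi$ has zero integral over every closed orbit.

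Finally, Livšic's theorem, valid for $C^{1+\kappa}$ transitive (indeed mixing, by Benoist) Anosov flows with H\"older data, produces a H\"older $U$ with $\Psi=XU$.

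The main obstacle is justifying Livšic in this low-regularity setting. The flow is only $C^{1+\kappa}$ and $X$ is only $C^{1+\kappa}$. However, Livšic's proof uses only the Anosov closing lemma, transitivity and H\"older continuity, and none of these requires smoothness beyond $C^1$. I would simply check that the standard construction applies: define $U$ on a dense orbit as the integral of $\Psi$, and use closing to get H\"older continuity.

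A secondary point is making sure that the orientation conventions in (\ref{periods}) are consistent. Specifically, the period of $F_\mu$ along $\g$ should be $c_\mu(\g,\g^+)$, and the period along the reversed orbit should be attached to $\g^{-1}$. I would verify this directly from (\ref{equivrel}).
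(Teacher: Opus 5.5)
Your proposal is correct and follows the paper's proof: the paper also reduces the statement to equality of periods, $\int_\gamma F_{\nu^{u,L^*}} = c_{n^{-1}_*\mu_{\partial\Omega^*}}(\gamma,\gamma^+) = c_{\mu_{\partial\Omega}}(\gamma^{-1},\gamma^-) = \int_\gamma F_{\nu^{u,L}}\circ\sigma$, using (\ref{periods}), (\ref{identity}) and the reversibility lemma, and then applies the Livsic theorem. On your orientation check, computing directly from the formula for $G_\mu$ gives $G(\gamma x,\gamma\xi)=G(x,\xi)e^{+c_\mu(\gamma,\xi)}$, so (\ref{periods}) actually holds with an overall minus sign; since that sign enters both sides in the same way, the equality of periods and the conclusion are unaffected.
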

\begin{proof}The two functions in the proposition have the same periods and thus by Livsic Homological Theorem they are cohomologous, i.e., there exists a H\"older coboundary $U$ statisfying this equality.\end{proof}
\begin{proposition}  \label{ms}The transverse measure defined by
$$  d\nu^s \equiv  e^{U\circ \sigma} d \nu^{s,L^*}$$
satisfies
\begin{equation}
 \label{stable}
\frac{d{\phi_{-t }}_* \nu^s_{\phi_t(z)}} {d \nu^s_z}(z)  
 = e^{ \int_{0}^t  F_{\nu^{u,L}} (\phi_{s}(z)) ds}
\end{equation} 
\begin{proof} We use (\ref{evol*}),  Proposition \ref{livsic} and the reversibility of the flow to get
\begin{equation}
\begin{split}
\frac{d{\phi_{-t }}_* \nu^s_{\phi_t(z)}} {d \nu^s_z}(z) & = \frac {e^{U(\sigma(\phi_{t }(z))}}{e^{U(\sigma(z))}} \frac{d{\phi_{-t }}_* \nu^{s,L^*}_{\phi_t(z)}} {d \nu^{s,L^*}_z}(z) \\
& = \frac {e^{U(\sigma(\phi_{t }(z))}}{e^{U(\sigma(z))}} e^{ \int_{-t}^0F_{\nu^{u,L^*}}(\phi_{s}(\sigma(z))) ds}\\
& = \frac {e^{U(\sigma(\phi_{t }(z))}}{e^{U(\sigma(z))}}  e^{ \int_{-t}^0 (F_{\nu^{u,L}}\circ \sigma + X. U)(\phi_{s}(\sigma(z)) ds}\\
& = e^{ \int_{-t}^0 (F_{\nu^{u,L}}\circ \sigma)(\phi_{s}(\sigma(z)) ds}\\
&= e^{ \int_{-t}^0 F_{\nu^{u,L}}(\phi_{-s}(z)) ds}\\
\end{split}
\end{equation}
Then change $s$ to $-s$.
\end{proof}
\end{proposition}

\subsection{ Flow invariance}We can now prove our main theorem \ref{main}.
\begin{proof}
Let us consider T an open disc transverse to the flow and 
a local flow box. The  stable leaves intersected with T are transversal to the strong unstable foliation. We equip the strong unstable foliation with $ \nu^{u,L}_z$ which is in the Lebesque class.
We do the same for the intersections with unstable leaves and use $\nu^s$ to give a measure to a strong stable foliation.

Now we use (\ref{stable}) and (\ref{varmeasureL}) to check that the measure 

$$ d\mu_{SRB} = dt\otimes d\nu^{u,L} \otimes  d\nu^s$$
is flow invariant. By compactness, the measure is finite,  so up to a scalar it is an invariant probability measure. It  also fulfills a characterization of the SRB measure. 
The conditional measures $dt\otimes d\nu^{u,L}$ of that measure along the unstable leaves have been designed to be  in the Lebesgue measure class with H\"older densities. That measure is absolutely continous with respect to both stable and unstable as a  product. The trick of Anosov (\cite{An1}) can be used to show that the measure is ergodic and mixing.
\end{proof}

\section{ Counting of Gibbs periods,  \ref{minimum} and \ref{growth}}
There is a proof given in the Riemannian case by F. Ledrappier which is valid for any equilibrium state and which, as we will see, extends verbatim to our setting.
Using the construction of the transverse measure from a quasi-invariant measure $\mu$, Ledrappier shows in Prop 2 of \cite{Le} the deep fact that the pressure of the measure has to vanish;
 \begin{equation}
 P(- F_{\mu})=0.
 \end{equation}
  As a very  nice corollary, he obtained  the following result that  we will extend to our settings.
 \begin{corollary} (Ledrappier \cite{Le} Corollary 1)
 Let  $c_{\mu}$ be a non-trivial (non cohomologous to a constant)  H\"older cocycle  associated to a quasi-invariant measure $\mu$ on the boundary $\partial \Omega$ and $F_{\mu}$ the $ C^{0+\kappa}$ associated function as in (\ref{ctoF}). If all the periods are positive, the vanishing of the pressure $P(- F_{\mu})=0$ implies

 \begin{align}
  \label{limsup}
 1  &  = \limsup_{ n \rightarrow +\infty} \frac {1}{n} \log ( \# \lbrace \g \in \G  ; n-1 \leq c_{\mu}(\g,\g^+) < n \rbrace)  \\
       & = \lim _{ n \rightarrow +\infty} \frac {1}{n} \log ( \# \lbrace \g \in \G  ;  c_{\mu}(\g,\g^+) \leq n \rbrace
 \end{align}
 and 
 \begin{equation}
  \label{K}
 \inf_{\g \neq 1}\frac{c_{\mu}(\g,\g^+)}{ \ell(\gamma)} > 0
 \end{equation} where $\ell(\g)$ is the Hilbert length of $\rho(\g)$.
 \end{corollary}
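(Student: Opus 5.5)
We work throughout with the Gurevich-type pressure (\ref{P}) and the periods identity (\ref{periods}), $\int_\g F_\mu = c_\mu(\g,\g^+)$. Write $c(\g)=c_\mu(\g,\g^+)$ for short. The proof has three steps: positivity of the ratio (\ref{K}), the growth rates, and the upper bound.

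\textbf{Step 1: the bound (\ref{K}).} Each period $c(\g)$ is the integral of $F_\mu$ over the closed orbit of length $\ell(\g)$. So $c(\g)/\ell(\g)=\int F_\mu\, d\delta_\g$, where $\delta_\g$ is the normalized invariant measure on that orbit.

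Suppose the infimum were $0$. By weak-$*$ compactness and the density of periodic-orbit measures among invariant measures (Anosov closing lemma / Sigmund), we get an invariant probability $m$ with $\int F_\mu\,dm\le 0$. Now use $P(-F_\mu)=0$ through the variational principle: for every invariant $m$, $h_m-\int F_\mu\,dm\le 0$.

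If $\int F_\mu\,dm<0$, this forces $h_m<0$, which is impossible. If $\int F_\mu\,dm=0$, then $h_m=0$ and $m$ is an equilibrium state of $-F_\mu$. Equilibrium states of a H\"older potential on a mixing Anosov flow are unique and fully supported Gibbs measures, which have positive entropy; contradiction.

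To rule out the infimum being approached only in the limit, note that the set of values $\int F_\mu\,dm$ is a compact interval. Its minimum is attained by some invariant $m$, and the argument above shows that minimum is $>0$. Therefore $K:=\inf c(\g)/\ell(\g)\ge \min_m\int F_\mu\,dm>0$.

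\textbf{Step 2: growth rates.} The family $\{c(\g)\}$ consists exactly of the periods of the reparametrized flow with speed $1/F_\mu$. This makes sense once $F_\mu$ is cohomologous to a strictly positive H\"older function, which follows from Step 1 via the standard Livsic/Sambarino argument: a positive-period function with positive minimal average is cohomologous to a positive function.

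By Abramov's formula, the topological entropy of the reparametrized flow is the unique $s$ with $P(-sF_\mu)=0$. Since $s\mapsto P(-sF_\mu)$ is strictly decreasing (its derivative is $-\int F_\mu\,dm_s<0$) and vanishes at $s=1$ by Ledrappier's identity, that entropy equals $1$. The reparametrized flow is still a topologically mixing, or at least transitive, Anosov-type flow, so by Bowen's counting of closed orbits $\frac1r\log\#\{c(\g)\le r\}\to 1$.

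For the $\limsup$ over windows $[n-1,n)$, write $N(r)=\#\{c(\g)\le r\}$. Comparing $N(n)$ with $\sum_{k\le n}\#\{k-1\le c<k\}$ gives $\limsup\le 1$. If the window counts had $\limsup<1$, summing them would give $N(n)\le Ce^{(1-\epsilon)n}$, contradicting the limit. Hence the $\limsup$ equals $1$.

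Alternatively, one can avoid reparametrization and argue directly from (\ref{P}), splitting sums over $\ell$-windows according to the values of $c(\g)$. Here (\ref{K}) together with the upper bound $c(\g)\le \|F_\mu\|_\infty \ell(\g)$ ensures that each $c$-window involves only $\ell$ in a window of comparable size.

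\textbf{Main obstacle.} The delicate point is the upper bound in Step 2 without the reparametrization. Pressure zero only controls sums of $e^{-c(\g)}$ over $\ell$-windows. To count by $c$-values one must interchange the two windowings and rule out subexponential-versus-exponential loss. I would try the reparametrization route first, since it cleanly converts $P(-F_\mu)=0$ into a statement that the entropy equals $1$. The case where $c_\mu$ is cohomologous to a constant is excluded by hypothesis; there the ratio would be constant and the counting would reduce to the one for the Hilbert length.
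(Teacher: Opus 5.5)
Your proof is correct, but it takes a different route from the paper's.

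\textbf{The bound (\ref{K}).} The paper argues through the pressure curve. The variational principle gives $P(-tF_\mu)\ge -t\inf_\nu\int F_\mu\,d\nu$. If that infimum were $0$, the decreasing function $t\mapsto P(-tF_\mu)$ would be nonnegative and vanish at $t=1$, hence vanish on all of $[1,\infty)$. This contradicts strict convexity, and that is where the hypothesis ``not cohomologous to a constant'' enters. You instead use uniqueness of the equilibrium state of $-F_\mu$ and positivity of its entropy. Your version needs no non-triviality hypothesis. Be aware, though, that positivity of entropy of the Gibbs state is itself a nontrivial input: it comes from the Bernoulli property, or from essentially the same strict-convexity argument the paper uses.

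\textbf{The counting.} The paper stays entirely inside the Gurevich formula (\ref{P}). The lower bound $t_0\ge 1$ comes from the Hadamard radius of convergence of $\sum a_n z^n$, combined with $P(-tF_\mu)>0$ for $t<1$. The upper bound comes from slicing the range $[K,K']$ of $c_\mu(\g,\g^+)/\ell(\g)$ into windows of width $\tau_n=O(1/n)$ and applying Lemma \ref{lem} in each window. This needs no coding or time change. However, it only controls the $\limsup$ over unit windows. It does not by itself give existence of $\lim\frac1n\log\#\{c_\mu\le n\}$, which the paper obtains only later via reparametrization and Parry--Pollicott in Section \ref{count}.

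Your route brings that later strategy (the proof of Theorem \ref{growth}) forward. You change time by a positive function cohomologous to $F_\mu$, which you correctly derive from Step 1. Abramov then pins the entropy of the new flow at the zero $s=1$ of $s\mapsto P(-sF_\mu)$. Periodic-orbit growth finishes the argument. This is shorter and gives the genuine limit directly; with mixing and Parry--Pollicott it would even give $te^{-t}N(t)\to 1$. The cost is heavier machinery.

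One point of precision: a H\"older time change is not a $C^1$ Anosov flow. Bowen's periodic-orbit growth theorem should therefore be applied through the Markov coding of the time-changed flow. That coding is a suspension over an irreducible subshift of finite type with H\"older roof, given by Sambarino's lemma as in the paper. For such suspensions $\frac1r\log\#\{\text{periods}\le r\}\to h_{top}$, and the bounded multiplicity of the coding does not affect the exponential rate.
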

\noindent This proves both the second item of  proposition \ref{minimum} and Proposition \ref{growth} in our two examples.

Even if his result is proved in a Riemannian context  we can reprove it here assuming only that the function is H\"older continuous to provide some details  missing in the written version of his paper. 
\begin{proof}
From the general theory for topologically transitive Anosov flows,  the   pressure function for a H\"older cocycle $F$,
 $P(-t F )\equiv P( \phi ) (-tF)$ defined on $ t\geq 0$, satisfies the variational formula
 
 \begin{equation} 
P( \phi)( -tF) = \sup_{\mu \ \phi-{inv \ prob}} \lbrace  h( \phi , \mu) - t \int F d\mu \rbrace
 \end{equation}
It is proven in Parry-Pollicott \cite{Pa-Po}  that  the function $ t \rightarrow P(-tF)$ defined on $ \R^+$ is decreasing  and analytic in $t$.
 In (Prop 4.1.2) they show that this function is strictly convex as long as $F$ 
 is not homologous to a constant. From the definition and the non-negativity  of the entropy we have 
  \begin{equation} 
P( \phi)( -tF) \geq  \sup_{\nu \ \phi-{inv \ prob}} \lbrace  - t \int F d\nu \rbrace =  -t \inf_{\nu \ \phi-{inv \ prob}}\lbrace  \int F d\nu \rbrace
 \end{equation}
Hence if $ K \equiv \inf_{\nu \ \phi-{inv \ prob}}\lbrace  \int F d\nu \rbrace =0  $ the pressure function stays non-negative and cannot vanish at $t=1$ as required. Hence $K>0$.  

Note that $$ \chi_\mu(\g) \equiv  \frac{c_{\mu}(\g,\g^+)}{ \ell(\g)}$$ is the integral of $F_\mu$ with respect to the  Dirac measure of the periodic orbit corresponding to $\g$.
 By the  density of the periodic Dirac measures among the flow invariant probability measures  we conclude for (\ref{K}).
 
 To prove (\ref{limsup}), let us first give some coarse estimates and fix notations.
Let $a_n =  \# A_n = \# \lbrace [\g] \in [\G]  ; n-1 \leq c_{\mu}(\g,x^+(\g)) < n \rbrace$. Set   
\begin{eqnarray}\label{1}
t_0 = \limsup_{ n \rightarrow +\infty} \frac {1}{n} \log (a_n) 
\end{eqnarray}
It is immediate that $t_0 > -\infty $ otherwise the cocycle would be cohomologous to $0$. From (\ref{K}) for any $\g \in A_n $ we know  
$$  \ell( \g)\leq  \frac{c_{\mu}(\g,x^+(\g))}{K} < \frac {n}{K}, $$ 
 which implies that $a_n$ cannot grow faster that the number of periodic orbits of length smaller than $\frac {n}{K}$, hence 
 $$ t_0 \leq \limsup_{ n \rightarrow +\infty}\frac {1}{n} \log \# \lbrace [\g] \in [\G] ; \ell( \g )\leq   \frac {n}{K}\rbrace = \frac{h}{ K} $$
\subsection{ Lower bound} 
Given $c$,  a positive constant sufficiently large we can also introduce  the sets 
$$ A(n,T) \equiv \lbrace [\g] \in [\G] ; T-c \leq \ell(\g) < T ; n-1 \leq c_{\mu}(\g,x^+(\g)) < n \rbrace$$
together with 
$ a(n,T) = \# A(n,T) $.
Obviously $ a(n,T) \leq a_n$.\\
According to (\ref{P}),
\begin{equation*}
 \limsup_{T \rightarrow  +\infty}  \frac {1 } {T} \log   \overset{\infty} {\underset{n=1} \sum}  a(n,T) e^{-t n} \leq P(-tF) \leq \limsup_{T \rightarrow  +\infty}  \frac {1 } {T} \log   \overset{\infty} {\underset{n=1} \sum}  a(n,T) e^{-t( n-1)}
\end{equation*}
where in fact the summation is a finite summation.\\
Thus
\begin{equation}
 P(-tF)=  \limsup_{T \rightarrow  +\infty}  \frac {1 } {T} \log   \overset{\infty} {\underset{n=0} \sum}  a(n,T) (e^{-t})^n
\end{equation}
We may compare it to 
\begin{equation}
P(-tF) \leq  \limsup_{T \rightarrow  +\infty}  \frac {1 } {T} \log   \overset{\infty} {\underset{n=0} \sum}  a_n (e^{-t})^n
\end{equation}
But the radius of convergence of the sum 
$$ \overset{\infty} {\underset{n=0} \sum}  a_n X^n $$ is given by the Hadamard formula
$$ \frac {1}{R}=  \limsup_{ n \rightarrow +\infty} (a_n)^{1/n}. $$ 
From (\ref{1}), 
given $\epsilon$ there exists $N >0 $ such that for all $n > N$, $ \frac {1}{n} \log (a_n) < t_0 +\epsilon $.   Hence
$  a_n < e^{n(t_0 +\epsilon )}$,  which implies that for all positive $\epsilon$ 
$$ \frac {1}{R} \leq e^{t_0 +\epsilon }$$
that is 
$$ R \geq  e^{-t_0}.$$

And we know that if $t<1$ the pressure is positive,  thus the sum $\overset{\infty} {\underset{n=0} \sum}  a_n (e^{-t})^n$ diverges, hence $ e^{-t} \geq R$ that is 
$ t \leq t_0 $,   which concludes
$$ t_0 \geq 1.$$
\subsection{ Upper bound} 

For this we need to introduce some more notations for the set of possible values of $\chi_\mu (\g)$  for a periodic orbit $\g$. By compactness of the underlying manifold we know that the H\"older continuous function $F_\mu$ is bounded, thus there  exists $K' = \sup_{\g }\chi_\mu (\g)$. 

Choose $[ a,b] \subset [ K, K']$, and let's introduce the set
$$ B(a,b) \equiv \lbrace [\g]: a < \chi_\mu(\g)  < b   \rbrace $$ 
Let $$ N ( T, a ,b) = \# B(T,a,b) \equiv \#\lbrace [\g] : a < \chi_\mu(\g)  < b , \ell(\g) \in [ T-c, T]  \rbrace  $$
\begin{lemma}\label{lem} For any $[ a,b] $ as above and $h$ the topological entropy 
\begin{equation}
\limsup_{T \rightarrow  +\infty}  \frac {1 } {T} \log N ( T, a ,b) \leq \inf (b, h)
\end{equation}
\end{lemma}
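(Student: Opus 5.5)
We must prove Lemma \ref{lem}: $\limsup_{T\to\infty}\frac1T\log N(T,a,b)\le \inf(b,h)$.

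The bound by $h$ is immediate and comes from counting geometry alone. We have $N(T,a,b)\le \#\{[\g]:T-c\le \ell(\g)\le T\}$. By the characterization of the topological entropy as the exponential growth rate of closed geodesics (Section \ref{SRB}), this count grows at most like $e^{(h+\epsilon)T}$. Hence the $\limsup$ is at most $h$.

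The bound by $b$ comes from the vanishing of the pressure, $P(-F_\mu)=0$, which we are assuming (Ledrappier's Prop.~2, transferred to our setting). Fix $t=1$ in the definition (\ref{P}) of the pressure, applied to the function $-F_\mu$. Every $[\g]\in B(T,a,b)$ has $\ell(\g)\le T$ and $\chi_\mu(\g)<b$, and therefore
$$\int_\g -F_\mu=-c_\mu(\g,\g^+)=-\chi_\mu(\g)\ell(\g)> -bT.$$
Since all terms are positive, restricting the sum $Z(-F_\mu,T,c)$ to $B(T,a,b)$ gives
$$Z(-F_\mu,T,c)\ \ge \sum_{[\g]\in B(T,a,b)} e^{-c_\mu(\g,\g^+)}\ \ge\ N(T,a,b)\,e^{-bT}.$$
Taking $\frac1T\log$ and the $\limsup$ yields
$$0=P(-F_\mu)\ \ge\ \limsup_{T\to\infty}\frac1T\log N(T,a,b)-b,$$
which is the desired inequality with $b$. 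The positivity of the periods guarantees that $\chi_\mu(\g)\ge K>0$, so the choice $[a,b]\subset[K,K']$ is legitimate and the sign manipulations above are valid.

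The only point that needs care is matching the window $T-c<\ell(\g)<T$ in (\ref{P}) with the closed window $[T-c,T]$ used in $N(T,a,b)$. To handle this, enlarge $c$ slightly, or compare $N(T,a,b)$ with $Z(-F_\mu,T+1,c+1)$. Since this shift is by a bounded amount, it does not affect the exponential rate, as $\frac{T+1}{T}\to1$.

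Combining the two bounds gives $\limsup_{T\to\infty}\frac1T\log N(T,a,b)\le\min(b,h)$. I expect no genuine obstacle here: the real depth lies in the already assumed identity $P(-F_\mu)=0$, and the lemma is essentially a Chebyshev-type restriction of the partition function.
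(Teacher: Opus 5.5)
Your proof is correct and follows the paper's argument. You restrict the partition sum of $-F_\mu$ to $B(T,a,b)$, bound each term below by $e^{-bT}$ using $c_\mu(\g,\g^+)=\chi_\mu(\g)\ell(\g)\le bT$, and invoke $P(-F_\mu)=0$. You also spell out the bound by $h$ via the growth of closed geodesics and the harmless mismatch between the open and closed length windows, both of which the paper leaves implicit.
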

Again from the vanishing of the pressure,
$\forall \eta >0$ there exists $T_0$ such that $\forall T >T_0$
\begin{equation}
 \frac {1 } {T} \log   \underset{[\g] \in [\G] ,T-c <\ell( \g)< T ,\\  \chi^+ (\g) \in [a, b]}{\sum} e^{- c_{\mu}(\g,\g^+)} \leq \frac {1 } {T} \log   \underset{[\g] \in [\G] ,T-c <\ell( \g)< T }{\sum} e^{- c_{\mu}(\g,\g^+)} \leq \eta
\end {equation}
But  the first summation is  with respect to the set $B(T,a,b)$ for which $ c_{\mu}(\g,\g^+) \leq  \ell(\g)b \leq Tb $.
Hence for any $\eta>0$,
\begin{equation}\label{N}
\frac {1 } {T} \log N(T,a,b) -b \leq  \frac {1 } {T} \log   \underset{[\g] \in [\G] ,T-c <\ell( \g)< T ,  \chi (\g) \in [a, b]}{\sum} e^{- c_{\mu}(\g,\g^+)}\leq \eta
\end{equation}

This precisely means that
$$ \limsup_{T \rightarrow  +\infty}  \frac {1 } {T} \log N ( T, a ,b) - b \leq 0,$$
which proves the lemma. \\
\begin{remark} This estimate does not depend on $a \geq K $ hence 
\begin{equation}
 \limsup_{T \rightarrow  +\infty}  \frac {1 } {T} \log N ( T, K ,b) - b \leq 0 .
\end{equation}
\end{remark}

 Given $\eta$ as above and for any $n > N_0>>1$, let us consider a sequence of adapted uniform  subdivisions of the interval $ [K,K']$ of size $Q_n$ and width  $\ \tau_n = \frac{K'-K}{Q_n}$ that will be chosen thereafter 
and let $S_n$ be such that
$ S_n\tau_n \leq h < (S_n+1)\tau_n$.
These subdivisions will be fitted to use the previous lemma in oder to  give an upper bound of the cardinality of the sets
$$A_n \cap B(K+k\tau_n,K+(k+1)\tau_n)$$
where
\begin{equation}
\g \in A_n \cap B(K+k\tau_n,K+(k+1)\tau_n) \implies        \frac{n-1}{K+(k+1)\tau_n} \leq \ell(\g) \leq \frac{n}{K+k\tau_n}
\end{equation}

We know that the formula for the pressure is true provided $c$ is large enough, thus without restriction we can choose $ cK^2> K'$.
\begin{lemma} 
We can choose $\tau_n >0 $ small enough  so that  for any $k \in [ 0, \frac{K'-K}{\tau_n}]$
\begin{equation}
\frac{n}{K+k\tau_n} -c   \leq \frac{n-1}{K+(k+1)\tau_n} 
\end{equation}
\end{lemma}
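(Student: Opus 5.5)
We need to show that for $n$ large one can choose $\tau_n>0$ with
\[
\frac{n}{K+k\tau_n}-c\le \frac{n-1}{K+(k+1)\tau_n}\quad\text{for all } k\in\Bigl[0,\tfrac{K'-K}{\tau_n}\Bigr].
\]
Write $x=K+k\tau_n\in[K,K']$. The plan is to prove the equivalent form
\[
\frac{n}{x}-\frac{n-1}{x+\tau_n}\le c
\]
with a bound that is uniform in $x$. The approach is elementary: put everything over a common denominator and bound each term using $x\ge K>0$. Positivity of $K$ is guaranteed by (\ref{K}), proved just above.

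First compute
\[
\frac{n}{x}-\frac{n-1}{x+\tau_n}=\frac{n(x+\tau_n)-(n-1)x}{x(x+\tau_n)}=\frac{x+n\tau_n}{x(x+\tau_n)}.
\]
Since $x+\tau_n\ge x$, this is at most
\[
\frac{x+n\tau_n}{x^2}=\frac1x+\frac{n\tau_n}{x^2}\le \frac1K+\frac{n\tau_n}{K^2}.
\]
So it suffices to have
\[
\frac1K+\frac{n\tau_n}{K^2}\le c,\qquad\text{i.e.}\qquad n\tau_n\le cK^2-K.
\]
The standing choice is $cK^2>K'$, and since $K'\ge K$ this gives $cK^2-K>0$. (If one wanted more room, one could simply enlarge $c$, which the pressure formula permits.)

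Next choose the subdivision. Set $Q_n=\bigl\lceil n(K'-K)/(cK^2-K)\bigr\rceil$ and $\tau_n=(K'-K)/Q_n$. Then $\tau_n\le (cK^2-K)/n$, so $n\tau_n\le cK^2-K$ holds uniformly in $k$, and the displayed inequality follows for every $k$.

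If $K'=K$, then $\chi_\mu$ is constant on periodic orbits. By Livsic, $F_\mu$ would then be cohomologous to a constant, which is excluded by hypothesis. Hence $K'>K$ and the subdivision is nondegenerate.

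The only delicate point is that $\tau_n$ must shrink like $1/n$ so that the intervals $\bigl[\tfrac{n-1}{K+(k+1)\tau_n},\tfrac{n}{K+k\tau_n}\bigr]$ have length at most $c$; this is what allows Lemma \ref{lem} to be applied with windows of width $c$. Consequently the number of pieces $Q_n$ grows linearly in $n$. That growth is harmless later, since it contributes only a subexponential factor $\log Q_n/n\to0$. I expect no real obstacle beyond tracking this dependence.
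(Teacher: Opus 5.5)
Your proof is correct and follows essentially the same route as the paper. Both arguments clear denominators to get $\frac{x+n\tau_n}{x(x+\tau_n)}$ with $x=K+k\tau_n$, bound this uniformly in $k$ using $x\ge K$, and conclude that $\tau_n$ of order $1/n$ works. The differences are minor: bounding $1/x\le 1/K$ rather than $x\le K'$ gives you the slightly weaker requirement $n\tau_n\le cK^2-K$ instead of the paper's $n\tau_n\le cK^2-K'$, and you also make explicit the integrality of $Q_n$ and the exclusion of $K'=K$, which the paper leaves implicit.
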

\begin{proof}

We need to solve 
\begin{equation*}
\begin{split}
 \frac{c}{n}  &  \geq  \frac{1}{K+k\tau_n} -  \frac{1-\frac{1}{n}}{K+(k+1)\tau_n} \\
 & \geq \frac{ \frac{K}{n}+ \tau_n (1+ \frac{k}{n})}{(K+k\tau_n)(K+ (k+1)\tau_n)} 
 \end{split}
\end{equation*}
If we rewrite it as
\begin{equation*}
 \frac{c}{n} (K+k\tau_n)(K+ (k+1)\tau_n) \geq  \frac{K}{n}+ \tau_n (1+ \frac{k}{n})
\end{equation*}
we observe that both expressions on the left and  on the right are growing with $k$. 
 We have 
$$ \frac{c}{n} (K+k\tau_n)(K+ (k+1)\tau_n) \geq  \frac{cK^2}{n}$$
and for the expression on the right,  because  $ k\leq Q_n$, we have 
$$  \frac{K}{n}+ \tau_n (1+ \frac{\frac{K'-K}{\tau_n}}{n})=   \frac{K}{n}+ \tau_n (1+ \frac{Q_n}{n}) \geq  \frac{K}{n}+ \tau_n (1+ \frac{k}{n})$$
So if we  choose $\tau_n$ such that 
$$ \frac{cK^2}{n} \geq  \frac{K}{n}+ \tau_n (1+ \frac{\frac{K'-K}{\tau_n}}{n}) = \frac{K}{n} +  \tau_n  + \frac{K'-K}{n} =  \tau_n + \frac{K'}{n}  $$
that is
\begin{equation}
 \label{tau}
0<  \tau_n \leq \frac{cK^2- K'}{n}
\end{equation}
then all the inequalities are fulfilled.
\end{proof}
 
Now using  the previous lemma with $\tau_n$ as chosen above,  we have for $T= \frac{n}{K+k\tau_n}$,
$$ A_n \cap B(K+k\tau_n,K+(k+1)\tau_n) \subset  B(\frac{n}{K+k\tau_n}, K+k\tau_n,K+(k+1)\tau_n).$$
Hence we know from the Lemma \ref{lem}  and (\ref{N}) that for any  $n > N_0$ 
\begin{equation}
\# (A_n \cap B(K+k\tau_n,K+(k+1)\tau_n)) \leq e^{ (\inf (K+(k+1)\tau_n, h) +\eta) \frac{n}{K+k\tau_n}}
\end{equation}

 Let $S_n$ be such that
$ S_n\tau_n+K \leq h < (S_n+1)\tau_n+K$.
We can now give an upper estimate for $a_n $
\begin{equation}
\begin{split}
a_n  & =  \# A_n = \sum^{Q_n-1}_{k=0} \# (A_n \cap B(K+k\tau_n, K+(k+1)\tau_n)) \\
 & \leq \sum^{Q_n-1}_{k=0}e^{( \inf(K+(k+1)\tau_n, h) + \eta) \frac{n}{K+ k\tau_n} }\\
 & \leq \sum^{S_n-1}_{k=0}e^{( (K+(k+1)\tau_n + \eta) \frac{n}{K+ k\tau_n} } + \sum^{Q_n-1}_{k=S_n}e^{ (h + \eta) \frac{n}{K+ k\tau_n} }
 \end{split}
\end{equation}
To give a simpler upper estimate  for the  last two summations,  let us remark that   for any $0 \leq k $ and small $\eta>0$
$$ \frac{K+(k+1)\tau_n + \eta } {K+ k\tau_n} \leq \frac{K+\tau_n+ \eta} {K}$$
\noindent Thus also  for $k = S_n$, 
$$\frac{h+ \eta }{ K+ S_n\tau_n} \leq \frac{K +(S_n+1)\tau_n+ \eta }{ K+ S_n\tau_n} \leq  \frac{K+\tau_n + \eta} {K}.$$
And for  $k \geq S_n+1 $ we have 
$$\frac{h+ \eta }{ K+ k\tau_n}< \frac{K+k\tau_n +\eta} {K+k \tau_n} \leq  \frac{K+\eta} {K}\leq \frac{K+\tau_n + \eta} {K}.$$
Hence 
\begin{equation}
\frac{1}{n}\log a_n < \frac{1}{n}\log (Q_n) +  \frac{K+\tau_n+ \eta} {K}  = \frac{1}{n}\log (K'-K) - \frac{1}{n}\log(\tau_n) + \frac{K+\tau_n+ \eta} {K}
\end {equation}
Thus due to the expression for $\tau_n$ in (\ref{tau}),  we have  for any $\eta >0$
$$t_0= \limsup_{n \rightarrow  +\infty} \frac{1}{n}\log a_n \leq  \frac{K+\eta} {K}.$$
Hence $t_0\leq 1$.
%
%
 \end{proof}

 \subsection{Proposition \ref{minimum} for a Lebesgue measure and $\beta$-convexity}\label{beta}
In this section,  we focus on specific  geometric aspects  related to the two Lebesgue measure classes used to define the SRB measure. We observe  that the geometry of divisible convex projective sets sheld some light on  Ledrappier's general result. 

\begin{proof}
 First we recall  the definition of $\beta$-convexity  of a quasi-symmetrically convex set.
\begin{definition}[Def 1.2 in \cite{BIHES}] For a $C ^1$-convex function $F:U\ra \mathbb R$, set $D^F_x(h)=F(x+h)-F(x)- F'(x)h$.

For $\beta\in (2,\infty)$, $F$ is said to be {\bf $\beta$-convex} if $$\inf_{\{(x,h):h\neq 0\}} |h|^{-\beta}D^F_x(h)>0.$$  
\end{definition}
For a given strictly convex   $C^1$-domain $\Omega\subset \mathbb{PR}^m$, one can cover $\partial \Omega$ by small open sets $\cal O_i$ and each $\cal O_i$ is realized by the graph of a strictly convex $C^1$ function $F_i:U_i \ra \mathbb R$ where $U_i$ is a convex open set in $\mathbb R^{m-1}$.
Then set $\beta(\Omega)$ to be $$\max_i\min(\beta: F_i\ \text{is}\ \beta\ \text{convex on all compact sets of}\ \cal O_i)$$
It is proved in Corollary 1.5 (b) of \cite{BIHES} that if $\Omega$ is (Gromov) hyperbolic, then $\partial\Omega$ is $\beta$-convex for some $\beta\in (2,\infty)$.
Then Proposition \ref{minimum} takes its full sense when we relate it to the Lyapunov exponents of periodic orbits. For $\g \in  \G=\rho (\pi_1(M))$, the period  $c_{\mu_{\partial\Omega}}(\g,\g^+) $ is  linked to the sum of  the positive Lyapunov exponents along the orbit freely homotopic to $\g$ by the formula
\label{jacobian}
\begin{equation}
\chi^+(\g) \equiv \sum_{i=1}^{n-1} \chi^+_i (\g)= \frac{c_{\mu_{\partial\Omega} }(\g,\g^+) }{\ell(\g)}=\frac{ n\log(\lambda_1(\g)) +\log( \lambda_{n+1}(\g))}{\ell(\g)} .
\end{equation}
If we set 
$K(\rho) \equiv \inf_{\g} {\sum_{i=1}^{n-1} \chi^+_i (\g)}$,
then  the infimum of the smallest Lyapunov exponents, thanks to (\ref{betai}), satisfies 
 $$  K(\rho) \geq  (n-1)\inf_{\g} \chi^+_1(\g) =(n-1) \frac{2}{\beta_{\Omega}}$$
 So the proof of the second item of Proposition \ref{minimum} is also a direct consequence  from the $\beta$-convexity of divisible convex set,  shown by Benoist and Guichard.
 \end{proof}
 \begin{remark}
In dimension 2 we have from the definition an equality
\begin{equation}
   \frac {2}{ K(\rho)} =  \beta_{\Omega} = \beta _{\G} . 
 \end{equation}
\end{remark}
\subsection{Further disgression around Prop \ref{growth} for the SRB measure of a convex divisible sets }\label{digression}
For the 3rd item in Proposition \ref{minimum},  we just give some complementary dynamical elements, hoping this will bring a new point of view. Our approach also relies  on the vanishing of the pressure which is one of the main feature of the SRB measure reflected in the Ruelle formula
$$h_{SRB}(\phi)= \int \chi^+ d\mu_{SRB},$$ where $\chi^+=\sum \text{dim} E_i\cdot \chi_i^+$, where the Lyapunov exponents here are relative to the SRB  measure. The associated potential is
$$ F= \frac{d}{dt}(\log\det D\varphi_t|_{E^u})|_{t=0}.$$ Thus for any $\g \in \G$
$$ \int_{\g}  F= \log\det D\varphi_{\ell(\g)} |_{E^u} = c_{\mu_{\partial\Omega} }(\g,\g^+) = \chi^+(\g) \times \ell(\g)$$
and thus if we consider the Dirac measure probability $ \mu_\g(\g) =1$ of the periodic orbit we have
$$ \int_{\g}  F d\mu_{\g} = \chi^+(\g).$$
 Furthermore  from Parry and Pollicott (\cite{Pa-Po} Theorem 9.4)
we know that  its SRB entropy is
\begin{eqnarray*}
h_{SRB}=\int \chi^+ d\mu_{SRB}=\lim_{T\ra\infty}\frac{1}{Z_T}\sum_{\ell(\gamma) \leq T} \frac{1}{J_\gamma} \chi^+(\gamma)
\end{eqnarray*}
where the Jacobian $J_\gamma=e^{\chi^+(\gamma) \ell(\gamma)}$ and  where
$$Z_T=\sum_{\ell(\gamma)\leq T} \frac{1}{J_\gamma},$$
which immediately implies that $\inf \chi^+(\gamma) \leq h_{SRB}$. \\

\section{ Geodesic current}\label{current}
\subsection {Ledrappier-Pollicott-Schapira cocycle}
Let $F:T^1\tilde M\ra \R$ be a continuous H\"older $\Gamma$-invariant function where $M=\tilde M/\Gamma$ is a negatively curved compact manifold, or a strictly convex real projective manifold. A cocycle  $B^F:\partial\Gamma \times \tilde M \times \tilde M \ra \R$ introduced by Schapira is defined by
$$B^F_z(p,q)\equiv \lim_{s\ra\infty}[\int_0^{s+B_z(p,q)} F(\phi_t(p,z))dt-\int_0^{s} F(\phi_t(q,z))dt],$$ where $B_z:\tilde M\times \tilde M\ra \R$ is  a Busemann function.
When $F=1$, $B_z^F(p,q)=B_z(p,q)$, the Busemann cocycle. It satisfies
the following properties;
\begin{enumerate}
\item $B^F$ is $\Gamma$-invariant,
\item $B^F_z(p,q)=B^F_z(p,o)+B^F_z(o,q)$ for $o,p,q\in \tilde M$ and $z\in\partial\Gamma$
\item if $q$ is on the geodesic ray from $p$ to $z$,
$$B^F_z(p,q)=\int_p^q F$$ where the integral is along the geodesic connecting $p$ and $q$.
\end{enumerate}
To see (3),  for a point $q$ lying on the ray from $p$ to $z$, using $\varphi_{B_z(p,q)}(p,z)=(q,z)$,

$$\lim_{s\ra\infty}[\int_0^{-B_z(p,q)+s+B_z(p,q)} F(\varphi_t(p,z))-\int_0^{-B_z(p,q)+s}F(\varphi_t(q,z))]$$$$=\lim_{s\ra\infty}[\int_0^{B_z(p,q)} F(\varphi_t(p,z))]+\int_0^{-B_z(p,q)+s} F(\varphi_t(q,z))-\int_0^{-B_z(p,q)+s}F(\varphi_t(q,z))]$$$$=\int_p^q F.$$
Fixing a base point $o\in \tilde M$, one obtains a H\"older cocycle $c_F:\Gamma\times\partial\Gamma\ra\R$
$$c_F(\gamma, z)=B^F_z(\gamma^{-1}o,o).$$
Then its period is given by
$$c_F(\gamma,\gamma^+)=B_{\gamma^+}^F(\gamma^{-1}o,o)=B^F_{\gamma^+}(o,\gamma o).$$
\begin{lemma}$c_F(\gamma,\g^+)=\int_{[\gamma]} F$ where $[\gamma]$ is a geodesic representing $\gamma$.
\end{lemma}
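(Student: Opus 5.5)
The idea is to exploit the fact that $\gamma$ translates its own axis, and then use the cocycle identities to collapse the limit defining $B^F$. Write $c_F(\gamma,\gamma^+)=B^F_{\gamma^+}(o,\gamma o)$. By property (2), for any point $p$ we can split
$$B^F_{\gamma^+}(o,\gamma o)=B^F_{\gamma^+}(o,p)+B^F_{\gamma^+}(p,\gamma p)+B^F_{\gamma^+}(\gamma p,\gamma o).$$
We will choose $p$ on the axis of $\gamma$, namely the projective line from $\gamma^-$ to $\gamma^+$, which $\rho(\gamma)$ preserves.

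First, we show that the two outer terms cancel. By $\Gamma$-invariance (property (1)), together with $\gamma\gamma^+=\gamma^+$,
$$B^F_{\gamma^+}(\gamma p,\gamma o)=B^F_{\gamma^{-1}\gamma^+}(p,o)=B^F_{\gamma^+}(p,o).$$
By property (2), $B^F_{\gamma^+}(p,o)=-B^F_{\gamma^+}(o,p)$. So the first and third terms cancel and
$$c_F(\gamma,\gamma^+)=B^F_{\gamma^+}(p,\gamma p).$$
Property (1) here means $B^F_{gz}(gp,gq)=B^F_z(p,q)$, which follows from the $\Gamma$-invariance of $F$ and of the flow, since $g$ maps $\phi_t(p,z)$ to $\phi_t(gp,gz)$.

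Second, we compute the remaining term. Since $p$ lies on the axis and $\gamma^+$ is the attracting end, $\gamma p$ lies on the geodesic ray from $p$ to $\gamma^+$. It sits at Hilbert distance $\ell(\gamma)$ from $p$: $\rho(\gamma)$ acts on the axis as a translation by $\tfrac12(\log\lambda_1-\log\lambda_{n+1})$, which follows from computing the cross-ratio with the eigenvectors for $\lambda_1,\lambda_{n+1}$. Property (3) then gives
$$B^F_{\gamma^+}(p,\gamma p)=\int_0^{\ell(\gamma)}F(\phi_t(p,\gamma^+))\,dt.$$
The orbit segment from $(p,\gamma^+)$ to $(\gamma p,\gamma^+)=\gamma\cdot(p,\gamma^+)$ projects in $HM$ to exactly one traversal of the closed orbit in the free homotopy class $[\gamma]$. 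Since $F$ is $\Gamma$-invariant, the integral equals $\int_{[\gamma]}F$.

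The main obstacle is rigor rather than ideas. We need to know that the limit defining $B^F$ exists, so that properties (1)–(3) are legitimate. This uses the Hölder continuity of $F$ and the exponential convergence of geodesics asymptotic to $\gamma^+$ along strong stable leaves, which in turn relies on the Anosov property with uniform contraction. We also need the sign conventions for $B_z(p,q)$ in property (3) to match the orientation from $p$ toward $\gamma^+$. Finally, the independence of the base point $o$ is automatic from the cancellation in the first step.
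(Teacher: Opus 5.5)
Your proof is correct, but it takes a different route from the paper.

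\textbf{The paper's route.} It keeps the arbitrary base point $o$ and works directly with the limit defining $B^F_{\gamma^+}(o,\gamma o)$. Using $\Gamma$-invariance of $F$, it identifies the orbit of $(\gamma o,\gamma^+)$ with the $\gamma$-image of the orbit of $(o,\gamma^+)$. It then telescopes the limit into
$$\lim_k\int_0^{B_{\gamma^+}(o,\gamma o)}F(\phi_t(x_k,\gamma^+))\,dt,$$
where the points $x_k$ march out along the ray from $o$ to $\gamma^+$. It concludes by invoking the exponential forward-asymptoticity of that ray to the axis, together with H\"older continuity of $F$.

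\textbf{Your route.} You use the cocycle identity (2) and $\Gamma$-invariance (1) to move the base point onto the axis. There the cancellation $B^F_{\gamma^+}(\gamma p,\gamma o)=-B^F_{\gamma^+}(o,p)$ is exact, and property (3) evaluates the remaining term $B^F_{\gamma^+}(p,\gamma p)$ with no further limit.

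\textbf{What each buys.} Your argument is shorter and makes independence of $o$ (and conjugacy invariance) transparent. It also makes explicit a fact the paper uses silently: the window length $B_{\gamma^+}(o,\gamma o)$ equals $\ell(\gamma)$. This is your same decomposition applied to $F\equiv 1$. The paper's argument needs only the definition of $B^F$, invariance and a shadowing estimate, so it does not rely on the cocycle identity (2) or the exact formula (3). In your version the dynamical input (existence of the limit via contraction along strong stable leaves) is pushed entirely into the justification of properties (1)--(3), as you correctly point out.
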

\begin{proof}Let  $L_\gamma$ be a geodesic connecting $\gamma^-$ and $\gamma^+$. Consider the geodesic ray $(o,\gamma^+)$ starting from $o$ in the direction $\gamma^+$. Then its image under the action of $\gamma$ is
$(\gamma o, \gamma^+)$. Note that $\phi_{kB_{\gamma^+}(o,\gamma o)}(o,\gamma^+)=(x_k,\gamma^+)$ where $x_k$ is the point on $(o,\gamma^+)$ with $d(o,x_k)=kB_{\gamma^+}(o,x_1)=kB_{\gamma^+}(o,\gamma o)$. By the invariance of $F$ under the action of $\Gamma$ implies
$$\int_0^{kB_{\gamma^+}(o,\gamma o)} F(\phi_t(o,\gamma^+))=\int_0^{kB_{\gamma^+}(o,\gamma o)} F(\phi_t(\gamma o,\gamma^+)).$$
This shows that 
$$c_F(\gamma,\g^+)=\lim_{k\ra\infty} \int_0^{kB_{\gamma^+}(o,\gamma o)} F(\phi_t( o,\gamma^+))-\int_0^{(k-1)B_{\gamma^+}(o,\gamma o)} F(\phi_t(\gamma o,\gamma^+))$$$$=\lim_{k\ra\infty} \int_0^{B_{\gamma^+}(o,\gamma o)} F(\phi_t(x_k, \gamma^+))=\int_{[\gamma]} F$$ since the geodesic segment $(x_k, x_{k+1})$ gets closer to
$L_\gamma$ exponentially fast.
\end{proof}
From this lemma, $c_F(\gamma,\g^+)=c_F(\alpha \gamma\alpha^{-1},(\alpha \gamma\alpha^{-1})^+)$, that is to say that the period is conjugacy invariant.

The dual cocycle $\bar c_F$ is the one associated to the function $\bar F=F\circ a$ where $a$ is the antipodal map on $T^1M$, whose period is $\bar c_F(\gamma,\g^+)=c_F(\gamma^{-1}, \g^-)$.

One can define a Gromov product on $\partial\Gamma$
$$(x,y)_F=B^{\bar F}_x(o,p)+ B^F_y(o,p)$$ where $p$ is on the geodesic connecting $x$ and $y$. Then it is independent of the choice of $p$, and it satisfies
\begin{eqnarray}\label{produit}
(\gamma x, \gamma y)_F-(x,y)_F=- (\bar c_F(\gamma, x)+c_F(\gamma, y)).
\end{eqnarray}

\subsection{Comparison of cocyles}
First we recall the  $d\tilde B^s_z= -A^H_z$ on each stable leaf (see Section \ref{buse}), and $\frac{d B_\xi(\varphi_t(p), q)}{dt}=-1$ where $B$ stands for a Busemann function. Recall from eqn (\ref{ctoF}), $i_X(d\log G)(p,\xi)=-F(p, \xi)$.
We omit the subscript $\mu$ for simplicity.

We begin with a lemma;
\begin{lemma}
\begin{enumerate}
\item  $\frac{d B^F_\xi(\varphi_t(p), q)}{dt}|_{t=0}=- F(p,\xi)$,
\item  $B^F_\xi(p,q)=\log G(p,\xi)+ D(\xi, q)$.
\end{enumerate}
\end{lemma}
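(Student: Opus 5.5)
The plan is to differentiate the defining limit of $B^F$ directly for item (1), and then, for item (2), show that $B^F_\xi(\cdot,q)-\log G(\cdot,\xi)$ is constant in $p$; that constant is what we call $D(\xi,q)$.

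\textbf{Item (1).} First use the cocycle property (2) of $B^F$ to write
$$B^F_\xi(\varphi_t(p),q)=B^F_\xi(\varphi_t(p),p)+B^F_\xi(p,q).$$
Here $\varphi_t(p)$ stands for the base point of $\varphi_t(p,\xi)$, which lies on the geodesic ray from $p$ to $\xi$. For $t>0$, property (3) applies: with the antisymmetry $B^F_\xi(a,b)=-B^F_\xi(b,a)$, which comes from (2) with $p=q=o$, it gives
$$B^F_\xi(\varphi_t(p),p)=-\int_0^t F(\varphi_s(p,\xi))\,ds.$$
Differentiating at $t=0$ yields $-F(p,\xi)$, using continuity of $F$. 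For $t<0$ the same argument applies with $p$ and $\varphi_t(p)$ exchanged, so the two-sided derivative exists.

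\textbf{Item (2).} Fix $\xi$ and $q$, and set $H(p)=B^F_\xi(p,q)-\log G(p,\xi)$. By item (1) and equation (\ref{ctoF}), which gives $i_X d\log G=-F$, the derivative of $H$ along every geodesic flow line pointing to $\xi$ vanishes. So $H$ is constant on each geodesic asymptotic to $\xi$.

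To conclude that $H$ is constant on all of $\Omega$, I also need constancy transversally, along the horosphere $S^s(p,\xi)$. This is the main obstacle. The approach I would try first is to compare two points $p,p'$ on the same horosphere by flowing both towards $\xi$. Both $B^F_\xi(\varphi_t p,\cdot)$ and $\log G(\varphi_t p,\xi)$ change by $-\int_0^t F$ along each ray, so
$$H(p)-H(p')=H(\varphi_tp)-H(\varphi_tp').$$
The points $\varphi_tp$ and $\varphi_tp'$ are exponentially close as $t\to\infty$, by property (4) of the strong stable leaves.

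\textbf{Controlling the transverse difference.} It remains to show $H(\varphi_tp)-H(\varphi_tp')\to 0$, which needs a uniform modulus of continuity for $H$ at nearby points.
\begin{itemize}
\item For $\log G(\cdot,\xi)$, this comes from the $C^{1+\kappa}_s$ regularity of $G$ together with the equivariance (\ref{equivrel}). Using cocompactness, the derivative of $\log G$ in $x$ is uniformly bounded on $\Omega$, since $\alpha_\xi$ is $\Gamma$-invariant and continuous. Hence $\log G$ is uniformly Lipschitz in $p$.
\item For $B^F_\xi$, the defining limit gives $|B^F_\xi(a,b)|\le \|F\|_\infty\, d(a,b)$ up to a Hölder tail controlled by the exponential convergence of the rays to $\xi$. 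So it is also uniformly continuous on small scales.
\end{itemize}
Therefore the difference tends to $0$, and $H(p)=H(p')$.

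Since every point of $\Omega$ is reached from a fixed point by moving along the horosphere centred at $\xi$ and then along a ray to $\xi$, $H$ is constant in $p$. This constant is by definition $D(\xi,q)$, and item (2) follows. The Hölder dependence of $D$ on $\xi$ follows from that of $B^F$ and $G$.
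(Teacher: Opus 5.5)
Your proposal is correct and takes essentially the paper's route. For (1) you reach the same identity $B^F_\xi(\varphi_t(p),q)=B^F_\xi(p,q)-\int_0^t F(\varphi_s(p,\xi))\,ds$, via the cocycle property and property (3) rather than the paper's direct change of variables in the defining limit; for (2) you use, as the paper does, that $B^F_\xi(\cdot,q)$ and $\log G(\cdot,\xi)$ have the same derivative along flow lines together with the convergence of flow lines within a stable leaf, and you also supply the uniform-continuity estimate the paper leaves implicit.
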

\begin{proof}(1) 
First
$$B^F_\xi(\varphi_\tau(p),q)=\lim_{s\ra\infty}[\int_0^{s+B_\xi(\varphi_\tau(p),q)}F(\varphi_{t+\tau}(p,\xi))dt -\int_0^s F(\varphi_t(q,\xi)) dt]$$
$$=\lim_{s\ra\infty}[\int_\tau^{s+ B_\xi(\varphi_\tau(p),q)+\tau} F(\varphi_u(p,\xi))du  -\int_0^s F(\varphi_t(q,\xi)) dt]\ (u=t+\tau)$$
$$= \lim_{s\ra\infty}[\int_\tau^{s+ B_\xi(p,q)} F(\varphi_u(p,\xi))du  -\int_0^s F(\varphi_t(q,\xi)) dt]  $$
$$=\lim_{s\ra\infty}[\int_0^{s+ B_\xi(p,q)} F(\varphi_u(p,\xi))du  -\int_0^s F(\varphi_t(q,\xi)) dt]-\int_0^\tau F(\varphi_u(p,\xi))du  $$
$$=B^F_\xi(p,q)- \int_0^\tau F(\varphi_u(p,\xi))du.  $$
Then
$$\frac{d B^F_\xi(\varphi_\tau(p), q)}{d\tau}|_{\tau=0}=- F(p,\xi).$$
(2) On the stable leaf corresponding to $\xi$, we defined $i_X(d\log G)(p,\xi)=-F(p, \xi)$.
Hence on each flow line $\log G (\varphi_t(p),\xi)$ and $B^F_\xi(\varphi_t(p),q)$ have the same derivative by (1), hence $B^F_\xi(p, q)=\log G(p,\xi)+ D(\xi, q)$ for some $\Gamma$-invariant H\"older function $D:\partial\Omega \times \Omega\ra\R$ since
 flow lines starting from any $p$ on the stable leaf get closer exponentially fast.
\end{proof}
We set $f(\xi)=D(\xi, o)$ for a fixed base point $o\in \Omega$. It is H\"older since $G(\cdot, \xi)$ and $B^F_\xi$ is H\"older in $\xi$-variable.
\begin{proposition}\label{coc}
\begin{enumerate}
\item $c_F(\gamma, \xi)=c_\mu(\gamma,\xi)- f(\gamma \xi)+f(\xi)$
\item The measure $\mu'=e^{f}\mu$ has a cocyle $c_{\mu'}$ such that
 \begin{itemize}
 \item $c_{\mu'}(\gamma,\xi)=c_F(\gamma,\xi)$
 \item $\gamma^*(e^{f}\mu)=e^{f\gamma-c_\mu-f} (e^{f}\mu)$.
 \end{itemize}
\end{enumerate}
\end{proposition}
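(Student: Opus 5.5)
We unwind the definitions and plug in the previous Lemma. By definition $c_F(\gamma,\xi)=B^F_\xi(\gamma^{-1}o,o)$, and by item (2) of the Lemma,
$$B^F_\xi(p,q)=\log G(p,\xi)+D(\xi,q).$$
Taking $p=\gamma^{-1}o$ and $q=o$ gives
$$c_F(\gamma,\xi)=\log G(\gamma^{-1}o,\xi)+f(\xi).$$
It remains to identify $\log G(\gamma^{-1}o,\xi)$ through $c_\mu$.

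The tool is the equivariance rule (\ref{equivrel}), $G(\gamma x,\gamma\xi)=G(x,\xi)e^{-c_\mu(\gamma,\xi)}$. Applying it with $x=\gamma^{-1}o$ yields
$$\log G(\gamma^{-1}o,\xi)=\log G(o,\gamma\xi)+c_\mu(\gamma,\xi).$$
We then rewrite $\log G(o,\gamma\xi)$ using the same Lemma at the point $\gamma\xi$ with $p=q=o$. This gives
$$0=B^F_{\gamma\xi}(o,o)=\log G(o,\gamma\xi)+f(\gamma\xi),$$
so $\log G(o,\gamma\xi)=-f(\gamma\xi)$. Combining the pieces,
$$c_F(\gamma,\xi)=c_\mu(\gamma,\xi)-f(\gamma\xi)+f(\xi),$$
which is item (1).

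The main point to check is consistency of normalizations. The function $D$ must be the one for which the Lemma's identity holds for every pair $(p,q)$. In particular, the vanishing $B^F_{\gamma\xi}(o,o)=0$ must be compatible with the cocycle property (2) of $B^F$. The $\Gamma$-invariance of $D$ also has to mean $D(\gamma\xi,\gamma q)=D(\xi,q)$. If the normalization of $D$ is ambiguous, we can avoid it: argue via the cocycle identity $B^F_\xi(\gamma^{-1}o,o)=B^F_\xi(\gamma^{-1}o,p)+B^F_\xi(p,o)$ and the $\Gamma$-invariance $B^F_{\gamma\xi}(\gamma p,\gamma q)=B^F_\xi(p,q)$. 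Then the ambiguous additive terms cancel, and only the difference $\log G(\gamma^{-1}o,\xi)-\log G(o,\xi)$, together with the $f$ terms, survives.

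For item (2), we compute the pullback directly from $\gamma^*\mu=e^{-c_\mu(\gamma,\cdot)}\mu$ (Theorem \ref{quasi-invariant}). We get
$$\gamma^*(e^f\mu)=(e^{f}\circ\gamma)\,\gamma^*\mu=e^{f\circ\gamma-c_\mu}\mu=e^{f\circ\gamma-c_\mu-f}(e^f\mu),$$
which is the second bullet. Reading off the cocycle gives
$$c_{\mu'}(\gamma,\xi)=c_\mu(\gamma,\xi)+f(\xi)-f(\gamma\xi),$$
and by item (1) this equals $c_F(\gamma,\xi)$, which is the first bullet. The sign convention for the pullback must match the one in Theorem \ref{quasi-invariant}, so we check the sign once against that theorem. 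Finally, $\mu'$ is a positive finite measure with Hölder density, since $f$ is Hölder and bounded on the compact boundary $\partial\Omega$. Hence $\mu'$ is quasi-invariant with a Hölder cocycle.
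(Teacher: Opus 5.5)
Your argument is correct relative to the paper's displayed formulas. It uses the same two ingredients as the paper's proof: item (2) of the preceding Lemma and (\ref{equivrel}). Your treatment of item (2) of the Proposition is the paper's computation essentially verbatim.

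For item (1) the bookkeeping differs. The paper applies the Lemma at $\gamma\xi$ to the pair $(\gamma p,q)$ and converts with (\ref{equivrel}) and the Lemma at $\xi$. It then closes with the $\Gamma$-invariance $B^F_{\gamma\xi}(\gamma p,q)=B^F_\xi(p,\gamma^{-1}q)$ and the cocycle identity for $B^F$; this is essentially the fallback you sketch. You instead evaluate the Lemma at $p=q=o$, which forces $f=-\log G(o,\cdot)$. Then $c_F(\gamma,\xi)=\log G(\gamma^{-1}o,\xi)-\log G(o,\xi)$, and (1) is a single application of (\ref{equivrel}). Your route is shorter, never uses the $\Gamma$-invariance of $B^F$, and identifies $f$, hence $\mu'$, explicitly.

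Two corrections to your side remarks.
\begin{itemize}
\item There is no normalization ambiguity. Since the Lemma's identity holds for every $p$, $D(\xi,q)=B^F_\xi(p,q)-\log G(p,\xi)$ is determined by $G$ and $B^F$, so the fallback is not needed.
\item The condition $D(\gamma\xi,\gamma q)=D(\xi,q)$ that you list is false. The $\Gamma$-invariance of $B^F$ together with (\ref{equivrel}) gives $D(\gamma\xi,\gamma q)=D(\xi,q)+c_\mu(\gamma,\xi)$. So $D$ is not $\Gamma$-invariant despite the paper's wording; this is harmless, since neither proof uses it.
\end{itemize}

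Finally, both proofs inherit the sign in (\ref{equivrel}). Carrying out the sign check you promised, against Theorem \ref{quasi-invariant} and the series defining $G_\mu$, gives $G(\gamma x,\gamma\xi)=e^{+c_\mu(\gamma,\xi)}G(x,\xi)$. The statement still holds once $F_\mu$ is taken to be $+i_X d\log G_\mu$, which is the sign that (\ref{periods}) and $P(-F_\mu)=0$ actually require. With that choice the Lemma reads $B^F_\xi(p,q)=-\log G(p,\xi)+D(\xi,q)$, and your same two-line argument yields $f=\log G(o,\cdot)$ and $\mu'=\mu_o$.
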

\begin{proof}(1) Recall $c_F(\gamma, \xi)=B^F_\xi(\gamma^{-1}o,o)$  and from Eqn (\ref{equivrel})
$$\log G(\gamma p, \gamma \xi)=\log G(p,\xi)-c_\mu(\gamma, \xi).$$
Note by $\Gamma$-invariance of $B^F$,
$$B^F_{\gamma\xi}(\gamma p, q)=B^F_\xi(p,\gamma^{-1}q)=B^F_\xi(p,o)+B^F_\xi(o,\gamma^{-1}q).$$ But the first term is equal to
$$\log G(\gamma p, \gamma\xi)+D(\gamma\xi, q)=\log G(p, \xi)-c_\mu(\gamma,\xi)+D(\gamma\xi, q)$$$$=B^F_\xi(p,q)-D(\xi,q) -c_\mu(\gamma,\xi)+D(\gamma\xi, q).$$
Putting $q=o$, we get
$$B^F_\xi(p,o)-c_F(\gamma,\xi)=B^F_\xi(p,o)+B^F_\xi(o,\gamma^{-1}o)$$$$=B^F_\xi(p,o)-D(\xi,o) -c_\mu(\gamma,\xi)+D(\gamma\xi, o).$$
(2) Since $\mu$ is a quasi-invariant measure with $\gamma^*\mu=e^{-c_\mu(\gamma,\cdot)}\mu$, we calculate
$$\gamma^*(e^{f}\mu)(\xi)=e^{f\gamma(\xi)}\gamma^*\mu(\xi)=e^{f\gamma(\xi)} e^{-c_\mu(\gamma,\xi)}\mu(\xi)$$$$=e^{f\gamma(\xi)-c_\mu(\gamma,\xi)-f(\xi)}e^{f(\xi)}\mu(\xi).$$
Hence
$$c_{\mu'}(\gamma,\xi)=c_\mu(\gamma,\xi)-f \gamma(\xi)+f(\xi).$$
\end{proof}

\subsection{Calculation of Gromov product}
According to Proposition \ref{livsic},  $F\circ\sigma= F^*+ L_X U$ for some H\"older function $U$ on $HM$. Then we calculate
$$B^{F\circ \sigma}_z(p,q)=B^{F^*}_z(p,q)+\lim_{s\ra\infty}[ \int_0^{s+B_z(p,q)}L_XU(\varphi_t(p,z))dt-\int_0^s L_XU(\varphi_t(q,z)) dt]$$
$$=B^{F^*}_z(p,q)+\lim_{s\ra\infty}[U(\varphi_{s+B_z(p,q)}(p,z)-U(p,z)- (U(\varphi_s(q,z))-U(p,z))]    $$
$$=B^{F^*}_z(p,q)+U(q,z)-U(p,z).$$
Hence
\begin{align}\label{coo}
\begin{split}
c_{F^*}(\gamma, x)&=B^{F^*}_x(\gamma^{-1}o,o)=B^{F\circ \sigma}_x(\gamma^{-1}o,o)-U(o,x)+U(\gamma^{-1}o,x)\\
&=c_{F\circ \sigma}(\gamma,x)-U(o,x)+U(o,\gamma x)
\end{split}
\end{align} by $\Gamma$-invariance of $U$. Furthermore by (\ref{coo}) and (\ref{produit})
\begin{align}\label{equu}
\begin{split}
&(\gamma x, \gamma y)_F-(x,y)_F\\
=- c_{F\circ \sigma}(\gamma, x)-c_F(\gamma, y)&=
-c_{F^*}(\gamma, x)-U(o,x)+U(o,\gamma x)- c_F(\gamma, y).
\end{split}
\end{align}

Now define the {\bf Gromov product} as
\begin{eqnarray}\label{gromovproduct}
(x,y)_\mu=(x,y)_F - D_{\mu^*}(x,o) - D_\mu(y,o)-U(o,x)
\end{eqnarray} and verify that
$$\gamma^*(e^{-(x,y)_\mu} \mu^* \otimes \mu)=e^{-(\gamma x, \gamma y)_\mu} \gamma^* \mu^*\otimes \gamma^*\mu$$$$=e^{-(\gamma x, \gamma y)_\mu} e^{-c_{\mu^*}(\gamma, x)}e^{-c_\mu(\gamma, y)} \mu^*\otimes \mu=e^{-(x,y)_\mu} \mu^* \otimes \mu.$$
This follows from
$$(\gamma x, \gamma y)_\mu- (x,y)_\mu$$$$=(\gamma x, \gamma y)_F- D_{\mu^*}(\gamma x, o)- D_\mu(\gamma y, o)-U(o,\gamma x)- (x,y)_F+ D_{\mu^*}(x,o)+ D_\mu(y,o)+U(o,x)$$
$$= -c_{F^*}(\gamma x)- c_F(\gamma, y)- D_{\mu^*}(\gamma x, o)- D_\mu(\gamma y, o)+ D_{\mu^*}(x,o)+ D_\mu(y,o) $$
$$= -c_{\mu^*}(\gamma, x)- c_\mu(\gamma,y),$$ where the second equality follows
from  Eqn (\ref{equu}), and the last equality from Proposition \ref{coc} (1).
We remark that if we change the base point $o$ to $o'$, 
$$(x,y)_{o'}=B^{\bar F}_x(o', p)+B^F_y(o', p)-D_{\mu*}(x,o')-D_\mu(y,o')- U(o',x)$$ hence
$$(x,y)_o - (x,y)_{o'}=B^{\bar F}_x(o,o')+B^F_y(o, o')- D_{\mu*}(x,o)+ D_{\mu*}(x,o')- D_\mu(y,o)+ D_\mu(y,o')$$$$- U(o,x)+ U(o', x).$$
This implies that
$$\frac{d(e^{-(x,y)_{o'}} \mu^*\otimes \mu)}{d(e^{-(x,y)_o} \mu^* \otimes \mu)}(x,y)=e^{(x,y)_o- (x,y)_{o'}}.$$  Hence $(x,y)_o- (x,y)_{o'}$ behaves like a generalized Busemann function compared to the usual Patterson-Sullivan measures in  negatively curved Riemannian manifolds.
\subsection{Patterson-Sullivan measure}\label{PS}
Let us recall classical  facts about the Patterson-Sullivan measure associated to  a H\"older function $F$ defined on $H\tilde M$ . For $s>0$, one can form a Poincar\'e series
$$P_s(x,y)=\sum_{\gamma\in \pi_1(M)} e^{-s\int_x^{\gamma y}} F,$$ where the integral is along the geodesic connecting $x$ and $\gamma y$. 
From Ledrappier  \cite{Le} Lemma 2, we know that if  $F$ is such that  $P(-F)=0$ and $\int_{\gamma} F \geq 0$ for all  $\gamma$ in  $\Gamma$, then the Poincar\'e series  $P_{s}(x, y)$ converges for  large $s$.

From (\ref{K}) we know that  $ \inf_{\g \neq 1}\frac{c_{\mu}(\g,\g^+)}{ \ell(\gamma)}=K > 0$. By the specific property, there exist an $R>0$ such that for any geodesic segment  $[ x, \gamma y ]$ obtained by projection to the base manifold, there exists a periodic geodesic $\bar{\gamma}$ of length less than $d(x, \gamma y)+R$ tracking the geodesic segment $[x, \gamma y]$ and  $\int_{x}^{\gamma y} F-c_\mu(\bar{\gamma},\bar\g^+)$ is bounded. The number of  closed geodesic that can be similarly associated to  $\bar{\gamma}$ is bounded by $C\ell(\bar{\gamma}) \leq C \frac{c_\mu(\bar\gamma,\bar\g^+)}{K}$, for some  $C$. Hence  the exponential growth rates are the same.
The critical exponent $\delta$ is defined as $\inf_s \{s>0: P_s(x,y) <\infty\}$. Suppose $P_\delta(p,p)=\infty$. If not, one can modify the sum a little bit, see \cite{Le} for details, to make it work. Then the Patterson-Sullivan measure is obtained as a weak* limit of the measures
$$\mu_x^s=\frac{\sum_{\gamma\in \pi_1(M)}e^{-s\int_x^{\gamma x} F }\delta_{\gamma x}}{P_s(x,x)},$$ where $\delta_{y}$ is a dirac measure at $y$, as $s\ra \delta$, which converges
weakly on $\Omega \cup \partial \Omega$ to a probability Patterson-Sullivan measure $\mu=(\mu_x)$. Then it satisfies
$$\frac{d\g_*\mu_x}{d\mu_x}(\xi)=e^{-h_cc(\g^{-1},\xi)},$$ for a H\"older cocycle $c$ and
the exponential growth rate $h_c$ of $c$ defined by
$$h_c=\limsup_{s\ra\infty}\frac{\log\#\{[\g]:c(\g,\g^+)\leq s\}}{s}.$$

We can summerize previous sections as follows. For a given quasi-invariant measure $\mu$ on $\partial \tilde M$, we can get a potential $F_\mu$ as in Section \ref{pot}.
Then the Ledrappier-Pollicott-Schapira cocyle $c_{F_\mu}$ is related to the measure cocyle $c_\mu$ as in Proposition \ref{coc}.
\begin{theorem}For a closed manifold $M$, consider a quasi-invariant measure $\mu$ on $\partial \tilde M$ with positive period $c_\mu(\gamma,\g^+)>0$. Then by the result of Ledrappier, the H\"older potential $F=F_\mu$ satisfies $P(-F)=0$.
If there exists a quasi-invariant measure $\bar\mu$ with periods $c_{\bar\mu}(\gamma,\g^+)=c_\mu(\gamma^{-1},(\g^{-1})^+)$, then there exists a Gromov product $(x,y)_o$ (depending on $o\in \tilde M$) such that
$$e^{-(x,y)_o} d\mu(x)\otimes d\bar\mu(y)$$ is $\Gamma$-invariant, hence defines a geodesic current.
\end{theorem}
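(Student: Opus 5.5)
The plan is to combine the machinery of the previous subsections: the potentials $F=F_\mu$ and $F^*=F_{\bar\mu}$, the Ledrappier--Pollicott--Schapira cocycles, and Livsic's theorem.

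\textbf{Step 1: vanishing pressure.} Since $c_\mu(\gamma,\gamma^+)>0$, Ledrappier's Proposition 2 (recalled at the start of the counting section) gives $P(-F)=0$. Nothing needs to be verified here beyond the H\"older regularity of $F$, which was established in Section \ref{pot}.

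\textbf{Step 2: the potential $F^*$ is cohomologous to $F\circ\sigma$.} Let $F^*=F_{\bar\mu}$ be the potential attached to $\bar\mu$.
\begin{itemize}
\item By (\ref{periods}), $\int_\gamma F^*=c_{\bar\mu}(\gamma,\gamma^+)=c_\mu(\gamma^{-1},\gamma^-)$.
\item By the argument of the Lemma preceding Proposition \ref{livsic}, $\int_\gamma F\circ\sigma=\int_{\gamma^{-1}}F=c_\mu(\gamma^{-1},\gamma^-)$.
\end{itemize}
So $F^*$ and $F\circ\sigma$ have the same periods. Livsic's theorem then gives a H\"older $U$, differentiable along the flow, with $F\circ\sigma=F^*+XU$. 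This is exactly the situation of Proposition \ref{livsic}, but now with general $\mu$ and $\bar\mu$.

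\textbf{Step 3: define the Gromov product and check invariance.}
\begin{itemize}
\item Build the Schapira cocycles $B^F$ and $B^{F\circ\sigma}$ and the product $(x,y)_F=B^{\bar F}_x(o,p)+B^F_y(o,p)$. It is independent of $p$ on the geodesic $(x,y)$ by property (3) of $B^F$, and it satisfies (\ref{produit}).
\item Use the comparison Lemma $B^F_\xi(p,q)=\log G(p,\xi)+D(\xi,q)$ and its analogue for $\bar\mu$. Together with Proposition \ref{coc}(1), these relate $c_F$ to $c_\mu$ and $c_{F^*}$ to $c_{\bar\mu}$, up to the coboundaries of $f=D_\mu(\cdot,o)$ and $D_{\bar\mu}(\cdot,o)$.
\item Compute (\ref{coo}) from Step 2: $c_{F^*}$ and $c_{F\circ\sigma}$ differ by the coboundary of $U(o,\cdot)$.
\item Set $(x,y)_o$ as in (\ref{gromovproduct}). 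The computation following (\ref{gromovproduct}) then gives
$$(\gamma x,\gamma y)_o-(x,y)_o=-c_{\bar\mu}(\gamma,x)-c_\mu(\gamma,y).$$
\item Combining this with $\gamma^*\mu=e^{-c_\mu(\gamma,\cdot)}\mu$ and the analogous identity for $\bar\mu$ yields $\Gamma$-invariance of $e^{-(x,y)_o}\,d\bar\mu\otimes d\mu$ on $\partial^{(2)}\Omega$. (One has to track which factor carries $\mu$ and which $\bar\mu$, matching the statement's ordering.)
\end{itemize}

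\textbf{Step 4: the measure is a geodesic current.} Local finiteness holds because $(x,y)_o$ is continuous on $\partial^{(2)}\Omega$ and the measures are finite. Since $e^{-(x,y)_o}$ is bounded on compact subsets of $\partial^{(2)}\Omega$, the product measure is Radon there.

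\textbf{Main obstacle.} The delicate point is the well-definedness and H\"older regularity of $B^F_\xi(p,q)$ and of $D(\xi,q)$ in the Hilbert setting. The limits converge because the weak-stable leaves contract exponentially and $F$ is H\"older. The H\"older dependence on $\xi$ must come from the $C^\kappa$ transverse regularity of the stable foliation. I would first check the exponential convergence of $\int_0^s F(\varphi_t(p,\xi))-F(\varphi_t(q',\xi))$ for $q'$ on the same strong stable leaf, using the Anosov property stated in Section \ref{pre}.
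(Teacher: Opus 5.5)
Your proposal is correct and follows essentially the paper's own argument; the theorem there simply summarizes the preceding subsections. The chain is the same: Ledrappier for $P(-F)=0$; Livsic for $F\circ\sigma=F^*+XU$, which you rightly derive for a general pair $\mu,\bar\mu$ from the period hypothesis, whereas the paper only quotes Proposition \ref{livsic} for the Lebesgue pair; then the Schapira cocycles, the comparison $B^F_\xi(p,q)=\log G(p,\xi)+D(\xi,q)$, Proposition \ref{coc}, and the normalized product (\ref{gromovproduct}).

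When you write out Step 3, recompute (\ref{produit}) rather than quoting it. The $\Gamma$-invariance of $B^F$ together with the cocycle identity gives $(\gamma x,\gamma y)_F-(x,y)_F=+\bigl(\bar c_F(\gamma,x)+c_F(\gamma,y)\bigr)$, so the signs in (\ref{gromovproduct}) have to be re-chosen to produce $(\gamma x,\gamma y)_o-(x,y)_o=-c_{\bar\mu}(\gamma,x)-c_\mu(\gamma,y)$. This affects only bookkeeping, not the existence claim.
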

By Proposition \ref{coc}, note that $c_F(\gamma, \xi)=c_\mu(\gamma,\xi)-D(\gamma\xi, o)+ D(\xi,o)$ and
the measure $\mu$ is equivalent to $\nu=e^{D(, o)}\mu$ with $c_\nu=c^F$.
Then it satisfies
$$\frac{d\gamma^* \nu}{d\nu}(\xi)=e^{-c_\nu(\gamma,\xi)}.$$ Hence the measure $\nu$ is a Patterson-Sullivan measure.

\subsection{GPS system}
A continuous Gromov-Patterson-Sullivan (GPS) system is a triple $(\sigma,\bar\sigma,G)$ where $\sigma,\bar\sigma:\Gamma\times M\ra \R$ are continuous proper cocycles and $G:M^{(2)}\ra \R$ is a continuous function such that
$$\bar\sigma(\gamma,x)+\sigma(\gamma, y)=G(\gamma x, \gamma y)- G(x,y)$$ for all $\gamma\in \Gamma$ and $(x,y)\in M^{(2)}$. See \cite{BCZZ} for details.

In our case, $\mu,\bar\mu$ both satisfy
$$\frac{d\gamma_*\mu}{d\mu}(\xi)=e^{-c_\mu(\gamma^{-1}, \xi)},
\frac{d\gamma_*\bar\mu}{d\bar\mu}(\xi)=e^{-c_{\bar\mu}(\gamma^{-1}, \xi)}$$ and 
$m=e^{-(x,y)_o}d\mu\otimes d\bar\mu$ is a $\Gamma$-invariant geodesic current.
Since $e^{-(\xi,\eta)_o}d\mu(\xi)\otimes d\bar\mu(\eta)$ is  $\Gamma$-invariant,
  it is satisfied
$$c_\mu(\gamma, x) + c_{\bar\mu}(\gamma, y)=(x,y)_o-(\gamma x, \gamma y)_o$$ for any $(x,y)\in \partial\Omega^{(2)}$.
Hence $$-(x,y)_o=-(x,y)_F + D_{\bar\mu}(x,o) + D_\mu(y,o)+U(o,x)$$ plays a role of $G$ in GPS triple. See Eqn (\ref{gromovproduct}).

\subsection{ H\"older reparametrization}
We follow the synchronization trick of Parry (\cite{pa} )
For a given H\"older potential $H>0$ defined on $HM$, one can reparametrize the geodesic flow $\phi_t$ as follows. Let 
$$\kappa(x,t)=\int_0^t H(\phi_s(x)) ds.$$ Since $H>\epsilon>0$, $\kappa(x,\cdot)$ is an increasing homeomorphism of $\mathbb R$, and there is an inverse $\alpha$ such that
$$\alpha(x, \kappa(x,t))=k(x, \alpha(x,t))=t.$$
The H\"older reparametrization $\phi^F$ is given by
$$\phi^H_t(x)=\phi_{\alpha(x,t)}(x).$$
Then it is easy to check the period of the periodic orbit $\gamma$ for the flow $\phi^F$
 is  $$\lambda(F,\gamma)=\int_0^{\lambda(\gamma)} H(\phi_s(x))ds$$ where $\lambda(\gamma)$ is the
 period of $\gamma$ for $\phi$.
 
The function  $F=\frac{d}{dt}(\log\det D\varphi_t|_{E^{u,0}})|_{t=0}$
is  not a positive function in general but by the Anosov property there exist $ \tau >0$ such that the function
\begin{equation}
\tilde F(z)= 1/\tau \int _0^{\tau} F(\phi_s(x)) ds = \log\det D\varphi_\tau{}|_{E^{u,0}})|   >0
\end{equation}
is positive and  have the same periods as $ F$.
This come from the fact that after a uniform and sufficiently large $\tau$ the flot is strictly expanding. 
We can  thus produce 
$\phi^{\tilde F}$ is  new  flow on $HM$
 with the same orbits, but with periods
 $$\tilde T(\g)= \int_0^{\ell(\gamma)}\tilde F(\phi_s(x)) ds=\int_\gamma F=(\log\det D\varphi_{l(\g)}|_{E^{u,0}})|=c_\mu(\gamma,\gamma^+)$$
\begin{remark} Being only a time change, this flow preserves the same stable and unstable leaves. For each $p$ on a periodic orbit $\g$ we may choose a local transversal $\Sigma p$ to the flow in $W^{u,0}_p$. This local section is also transverse to the new flow and the return map is the same the orbits being the same. Hence 
 $$  \log\det D\varphi_{T(p)}|_{T\Sigma_p} = \log\det D\tilde \varphi_{\tilde T(p)}|_{T\Sigma_p} $$
 This implies that for any periodic orbit $\chi_{\tilde \varphi}^+ (\g)$ the sum of positive liapunov exponents is identically $1$ because we have proven that 
  $$   \det D\tilde \varphi_{\tilde T(p)}|_{T\Sigma_p}  = \exp( \tilde T(\g)) $$
  \end{remark}
 This is in fact true along any orbit
 \begin{proposition} The  synchronised  flow with respect to the H\"older function $\tilde F $ is such that for any  $z \in HM $
 \begin{equation} 
  \chi^+_{\tilde \varphi} (z) = \lim_{\tilde t \rightarrow \infty} {\frac{1} {\tilde t}}{\log(det (D \tilde \varphi _{\tilde t})\vert_ {E^{u,0}})} =1
 \end{equation}
 \end{proposition}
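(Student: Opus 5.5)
The plan is to compare the derivative of the time-$t$ map along the unstable bundle for the two flows, and reduce the statement to the definition of the reparametrization combined with Birkhoff averages of $\tilde F$. Fix $z\in HM$ and write $\tilde\varphi_{\tilde t}(z)=\varphi_{\alpha(z,\tilde t)}(z)$, where $\alpha(z,\cdot)$ is the inverse of $\kappa(z,t)=\int_0^t\tilde F(\varphi_s z)\,ds$.

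\emph{Step 1: express the unstable derivative of the new flow.} Differentiating the identity $\tilde\varphi_{\tilde t}(w)=\varphi_{\alpha(w,\tilde t)}(w)$ gives
\[
D\tilde\varphi_{\tilde t}(v)=D\varphi_{\alpha(z,\tilde t)}(v)+\bigl(d\alpha(\cdot,\tilde t)(v)\bigr)\,X .
\]
The new flow has the same orbits, so it preserves the weak unstable foliation $W^{u,0}$, whose tangent bundle is $E^{u,0}=E^u\oplus\mathbb R X$. On this bundle $D\tilde\varphi_{\tilde t}$ therefore differs from $D\varphi_{\alpha}$ only by a correction in the $X$ direction. I would measure the determinant on a transversal to the flow direction, as in the preceding remark: pass to the quotient $E^{u,0}/\mathbb R X\cong E^u$. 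There the correction term vanishes. What remains is the following: up to the flow-direction factor, which compares $\tilde X=X/\tilde F$ at $z$ and at $\tilde\varphi_{\tilde t}z$, we have
\[
\det D\tilde\varphi_{\tilde t}|_{E^{u,0}}=\det D\varphi_{\alpha(z,\tilde t)}|_{E^{u,0}}\cdot\frac{\tilde F(z)}{\tilde F(\tilde\varphi_{\tilde t}z)} .
\]
Since $\tilde F$ is bounded above and below by positive constants, this factor contributes nothing after dividing by $\tilde t$.

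\emph{Step 2: replace the determinant by an integral of $\tilde F$ (the main obstacle).} By definition,
\[
\log\det D\varphi_{t}|_{E^{u,0}}(z)=\int_0^t F(\varphi_s z)\,ds .
\]
I would then compare $\int_0^t F$ with $\int_0^t\tilde F=\kappa(z,t)$. Because $\tilde F$ is the $\tau$-average of $F$ along the orbit, Fubini gives
\[
\Bigl|\int_0^t F(\varphi_s z)\,ds-\int_0^t\tilde F(\varphi_s z)\,ds\Bigr|\le 2\tau\|F\|_\infty ,
\]
uniformly in $z$ and $t$; only the two end pieces of length $\tau$ fail to cancel. This is the step where the argument must avoid any ergodic-theoretic input: the bound is uniform and pointwise, so the conclusion will hold for every $z$, not only Oseledets-regular points. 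Care is also needed with the normalization of $\tilde F$. The displayed definition mixes $\frac1\tau\int_0^\tau F$ with $\log\det D\varphi_\tau$, and I would use the averaged version $\frac1\tau\int_0^\tau F(\varphi_s\cdot)\,ds$. Its positivity after choosing $\tau$ large enough is exactly the uniform expansion of $E^u$ provided by the Anosov property.

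\emph{Step 3: conclude.} Take $t=\alpha(z,\tilde t)$, so that $\kappa(z,t)=\tilde t$. Combining Steps 1 and 2,
\[
\log\det D\tilde\varphi_{\tilde t}|_{E^{u,0}}(z)=\tilde t+O(1),
\]
with the $O(1)$ uniform in $z$. Dividing by $\tilde t$ and letting $\tilde t\to\infty$ gives the limit $1$ for every $z$. In particular this recovers the preceding remark on periodic orbits, where $\tilde T(\gamma)=c_\mu(\gamma,\gamma^+)$.
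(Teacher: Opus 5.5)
Your proposal is correct and is essentially the paper's argument. Both compare $\det D\tilde\varphi_{\tilde t}|_{E^{u,0}}$ with $\det D\varphi_t|_{E^{u,0}}$ at $t=\alpha(z,\tilde t)$ up to the bounded endpoint factor from $1/\tilde F$, and both compare $\tilde t=\int_0^t\tilde F$ with $\int_0^tF$ by the same Fubini averaging with bounded boundary terms. Your version $\log\det D\tilde\varphi_{\tilde t}|_{E^{u,0}}(z)=\tilde t+O(1)$ is slightly cleaner: the paper goes through the separate limits $\frac1t\log\det D\varphi_t\to\chi^+(z)$ and $\tilde t/t\to\chi^+(z)$, which tacitly need the Birkhoff average of $F$ to exist at $z$, whereas yours gives the limit at every $z$, and your $\frac1\tau$-normalization of $\tilde F$ is indeed the one needed to get $1$.
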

\begin{proof} 
Since $\tilde\varphi_{\tilde t}(p)=\varphi_t(p)$ where $\tilde t=\int_0^t \tilde F(\varphi_s(p)) ds$, 
$$\tilde X= \frac{d\tilde \varphi_{\tilde t}(p)}{d\tilde t}=\frac{dt}{d\tilde t}\frac{d\varphi_t(p)}{dt}=\frac{1}{\tilde F}X$$ where $m=\frac{1}{\tilde F}$ is H\"older and $C^1$ along the flow.
In general  one can show that 
$$\frac{d}{d\tilde t}(\det D\tilde \varphi_{\tilde t})=m \frac{d}{dt}(\det D\varphi_t)+\frac{dm(\varphi_t)}{dt}   $$  where $m(\varphi_t(z))=\frac{1}{\tilde F(\varphi_t(z))}$.
Hence
$$ \chi^+_{\tilde \varphi} (z) = \lim_{\tilde t \rightarrow \infty} {\frac{1} {\tilde t}}{\log(det (D \tilde \varphi _{\tilde t})\vert_ {E^{u,0}})}=\lim_{\tilde t \rightarrow \infty} {\frac{1} {\tilde t}}\log \int_0^{\tilde t} \frac{d}{d\tilde s}\det D\tilde\varphi_{\tilde s} d\tilde s$$
$$=\lim_{\tilde t \rightarrow \infty} {\frac{1} {\tilde t}}\log \int_0^{\tilde t} [m\frac{d}{ds}(\det D\varphi_s)+\frac{dm}{ds}] \tilde F ds$$
$$=\frac{t}{\tilde t} \frac{1}{t}\log[\int_0^t \frac{d}{ds}(\det D\varphi_s) ds +m(\varphi_t(z))- m(z)]$$   Since $m(\varphi_t)$ is a bonded function,
$$\lim_{t\ra\infty}\frac{1}{t}\log[\int_0^t \frac{d}{ds}(\det D\varphi_s) ds +m(\varphi_t(z))- m(z)]\ra \chi^+(z)$$ and it suffices to show that 
$\lim_{\tilde t\ra\infty} \frac{t}{\tilde t}\ra \frac{1}{\chi^+(z)}$.

But
$$\frac{\tilde t}{t}=\frac{1}{t}\int_0^t \tilde F(\varphi_s(z)) ds=\frac{1}{t}\int_0^t\frac{1}{\tau}\int_0^\tau F(\varphi_{s+u}(z)) du ds$$
$$=\frac{1}{t\tau}\int_0^\tau\int_0^t F(\varphi_{s+u}(z))dsdu=\frac{1}{t\tau}\int_0^\tau\int_u^{t+u} F(\varphi_{s'}(z))ds'du $$
$$=\frac{1}{t\tau}\int_0^\tau(\int_0^t F ds'-\int_0^u F ds'+\int_t^{t+u} F ds') du.$$
Since $u$ moves from $0$ to $\tau$ and $F$ is bounded, the second and the third integrals are bounded quantities $<B$. Hence we get
$$\lim_{\tilde t\ra\infty}\frac{\tilde t}{t}=\lim_{t\ra\infty}\frac{1}{t\tau}\int_0^\tau(\int_0^t F(\varphi_{s'}(z)) ds' + B )du$$$$=\lim_{t\ra\infty}\frac{1}{t\tau}\int_0^\tau\int_0^t F(\varphi_{s'}(z)) ds'du
=\chi^+   $$
This concludes the proof.
\end{proof}
Now using the Abramov formula it is easy to remark that the new flow has also a SRB measure given by 
\begin{equation} 
d\mu^{\tilde \varphi}_{SRB}=  \frac{1}{\tilde F}\frac {d\mu^{ \varphi}_{SRB}}{\int_{HM}\frac{1}{\tilde F} d\mu^{ \varphi}_{SRB} }
\end{equation}
The  SRB entropy satisfies  the Ruelle equality 
\begin{equation} 
h^{\tilde \varphi}_{SRB} = \int_{HM} \chi^+_{\tilde \varphi} (z) d\mu^{\tilde \varphi}_{SRB} = 1
\end{equation}
Similarly, the topological entropy satifies the Ruelle inquality
\begin{equation} 
h^{\tilde \varphi}_{top} \leq \int_{HM} \chi^+_{\tilde \varphi} (z) d\mu^{\tilde \varphi}_{BM} = 1
\end{equation}
hence 
$$h^{\tilde \varphi}_{top}= h^{\tilde \varphi}_{SRB}  =1$$

\subsection{Counting}\label{count}
What follows relies on the work of A. Sambarino \cite{Sa2} (Lemma 1.28) and uses the prime number theorem of  Parry-Pollicott.
Given a rectangular partion $\mathcal R=\{R_1,\cdots, R_N\}$ of the Poincar\'e cross-sections $\Sigma$ of the flow $\phi_t$ on $X$,  with the first return map $P:\Sigma\ra\Sigma$;
\begin{definition} For a flow $(X, \varphi_t)$, it is said that the triple $(\Sigma_A, \pi, r)$ is a Markov coding for $\varphi_t$ if $\Sigma_A=\{\omega=(\cdots, \omega_1,\omega_0,\omega_1,\cdots):P(\text{int}\ R_{\omega_n})\cap \text{int}\ R_{\omega_{n+1}}\neq\emptyset\}$ is the set of subshift of finite type, irreducible and aperiodic,   furthermore
$\pi:\Sigma_A \ra  X$, which assigns a point in $R_{\omega_0}$ to $\omega$, and the roof function
 $ r:\Sigma_A \ra \mathbb{R}^+$, which is the first return time of $\pi(\omega)$,
 are H\"older continuous, and
$\pi_r: \Sigma_A\times \mathbb{R}\ra X$ defined by $\pi_r(z,t)=\varphi_t(\pi(z))$
satisfies
\begin{enumerate}
\item  $\pi_r$ is surjective and H\"older continuous
\item  If $\hat r:\Sigma_A \times \mathbb{R}$ is defined by $\hat r(z,t)=(\sigma z, t-r(z))$, then $\pi_r$ is $\hat r$-invariant, where $\sigma$ is the standard forward left shift,
\item The translation flow $\sigma^r_t$ from $ \Sigma_A \times \mathbb{R}/ \hat r$ to itself is semi-conjugate to $\varphi_t$, i.e.
$$\pi_r \sigma^r_t=\varphi_t \pi_r $$
\item $\pi_r:\Sigma_A \times \mathbb{R}/\hat r \ra X$ has a uniformly bounded cardinality for each fibre  and the set where it is injetive has a full measure for any ergodic invariant probability measure with respect to $\sigma_t^r$.
\end{enumerate}
\end{definition}

Since the flow $\varphi $ is Anosov, it is associated to a coding. Now we use the work of A. Sambarino \cite{Sa2},\cite{Sa}.
\begin{proposition} 
(\cite{Sa2}, Lemma 1.28) Let $(\Sigma_A, \pi, r)$ be a Markov coding for   a transitive Anosov  flow $\varphi_t$ on $X$. Consider
 the  reparametrized flow $\tilde\varphi_{\tilde t}$,  with a positive H\"older function  $ \tilde F : X \ra \mathbb{R}^+_{\star} $.
 Set $ f : \Sigma_A \ra \mathbb{R}^+_{\star}$ by
 $$ f(z) = \int_0^{r(z)} \tilde F ( \varphi_s (\pi(z)) ds.$$
 Then $(\Sigma_A, \pi, f)$ is a Markov coding for the flow $\tilde\varphi_{\tilde t}$.
\end{proposition}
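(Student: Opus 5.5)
We have to check, for the triple $(\Sigma_A,\pi,f)$, the four conditions in the definition of a Markov coding, now for the flow $\tilde\varphi$. The plan is to build an explicit conjugacy between the two suspension flows, one with roof $r$ and one with roof $f$, and then transport each property through it. The Poincaré sections $\Sigma$, the rectangles $R_i$ and the first return map $P$ stay the same. This is because $\tilde\varphi$ has the same orbits as $\varphi$ with the same orientation ($\tilde F>0$). So $\Sigma_A$ and $\pi$ are unchanged, and only the roof has to be recomputed.

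First step: the roof. For $z\in\Sigma_A$, the point $\pi(z)\in R_{z_0}$ returns to $\Sigma$ for $\varphi$ at time $r(z)$. Along the same orbit segment, the new time is $\kappa(\pi(z),r(z))=\int_0^{r(z)}\tilde F(\varphi_s(\pi z))\,ds=f(z)$. Since $\tilde\varphi$ runs through the same points in the same order, $f(z)$ is the first return time for $\tilde\varphi$.

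Regularity and positivity of $f$: $f$ is Hölder because $r$ and $\pi$ are Hölder, $\tilde F$ is Hölder and bounded, and $(x,t)\mapsto\varphi_t(x)$ is Lipschitz in $t$ and Hölder in $x$ on compact time intervals. We also have $f\ge (\min\tilde F)\,\min r>0$.

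Second step: the conjugacy. Define
$$\Psi:\Sigma_A\times\mathbb R\to\Sigma_A\times\mathbb R,\qquad \Psi(z,t)=\Big(z,\int_0^t\tilde F(\varphi_s(\pi z))\,ds\Big).$$
For each fixed $z$ this is an increasing homeomorphism in $t$, and it is Hölder with a Hölder inverse, built from the inverse $\alpha$ of $\kappa$. A cocycle computation gives $\Psi\circ\hat r=\hat f\circ\Psi$: indeed $\kappa(\pi z,t)-f(z)=\kappa(\varphi_{r(z)}\pi z,t-r(z))=\kappa(\pi\sigma z,t-r(z))$, where we use $\varphi_{r(z)}(\pi z)=\pi(\sigma z)$ (this is condition (2) for the old coding). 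So $\Psi$ descends to a homeomorphism between the two quotient suspensions. It also conjugates the translation flow $\sigma^r_t$ with the reparametrization of $\sigma^f$ by the lifted function $\tilde F\circ\pi_r$.

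Third step: the new projection. Set $\pi_f(z,u)=\tilde\varphi_u(\pi z)$. Then $\pi_f\circ\Psi=\pi_r$. This gives surjectivity, Hölder continuity, $\hat f$-invariance and the semiconjugacy $\pi_f\sigma^f_u=\tilde\varphi_u\pi_f$, all directly from $\pi_r\sigma^r_t=\varphi_t\pi_r$ together with the definition of $\tilde\varphi$. The fibres of $\pi_f$ are images under $\Psi$ of fibres of $\pi_r$, so they have the same uniformly bounded cardinality.

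The main obstacle is the full-measure injectivity in condition (4), since the invariant measures change under the time change. I would use the Abramov-type bijection between ergodic invariant probabilities of a flow and of its reparametrization: $\nu\mapsto \tilde F\nu/\!\int\tilde F d\nu$, and its lift to the suspensions via $\Psi$. This bijection preserves null sets, because the density is bounded above and below. The set where $\pi_f$ is injective is the image under $\Psi$ of the set where $\pi_r$ is injective. Hence it has full measure for every ergodic $\sigma^f$-invariant measure, since the corresponding $\sigma^r$-invariant measure gives it full measure.

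Finally, irreducibility and aperiodicity of $\Sigma_A$ are untouched, because $\Sigma_A$ itself is unchanged. This completes all four conditions.
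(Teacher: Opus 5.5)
Your proof is correct. The paper gives no argument for this statement; it simply imports it from Sambarino's Lemma~1.28. So your verification supplies what the paper delegates to the reference, and it follows the natural route:
\begin{itemize}
\item the fibrewise time change $\Psi(z,t)=(z,\kappa(\pi z,t))$;
\item the identity $\Psi\circ\hat r=\hat f\circ\Psi$, which is exactly the old condition (2), $\pi(\sigma z)=\varphi_{r(z)}\pi(z)$, combined with additivity of $\kappa$;
\item the factorization $\pi_f=\pi_r\circ\Psi^{-1}$.
\end{itemize}
Conditions (1)--(3) and the bounded-fibre half of (4) follow at once from these.

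Two points need more precision, though neither breaks the argument.

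\emph{Direction of the reparametrization.} The additivity $\kappa(\pi z,s+t)=\kappa(\pi z,s)+\kappa(\varphi_s\pi z,t)$ shows that the induced map $\bar\Psi$ carries the reparametrization of $\sigma^r$ by $\tilde F\circ\pi_r$ onto $\sigma^f$. Equivalently, it carries $\sigma^r$ onto the reparametrization of $\sigma^f$ by $1/(\tilde F\circ\pi_f)$. Your phrasing has this the other way round. In (4) you only use that the connecting density is bounded above and below, so nothing is affected.

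\emph{Regularity of $\Psi$.} $\Psi^{\pm1}$ are only locally H\"older, since the constants grow with $|t|$. This is all that is needed.

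Finally, for (4) you can avoid the abstract Abramov correspondence. Both suspensions sit over the same shift, so their ergodic invariant probabilities are the normalizations of $\nu\otimes dt$ and $\nu\otimes du$ for the same ergodic $\sigma$-invariant $\nu$. The substitution $u=\kappa(\pi z,t)$ gives $\Psi_*(\nu\otimes dt)=(\tilde F\circ\pi_f)^{-1}\,\nu\otimes du$. Hence null sets correspond directly, and $\bar\Psi$ maps the injectivity set of $\pi_r$ onto that of $\pi_f$.
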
 
We also know from Benoist \cite{Be}  that the Hilbert flow is topologically mixing. This is due to the Zariski density of $\G$, which implies that the lengths of periodic orbits generate a dense subgroup of $\mathbb{R}$. A.  Sambarino  proves that this property is invariant by a H\"older reparametrization. To conclude the proof of Theorem \ref{growth}, we just need to use the Prime orbit Theorem of Parry and Pollicott.

\begin{theorem} (Parry-Pollicott \cite{Pa-Po 2, Pa-Po})
Let $\Sigma_A$ be an irreducible sub-shift of finite type, a function $ f : \Sigma_A \ra \mathbb{R}^+_{\star}$ H\"older continuous.
Assume that the suspension flow $ \sigma^f_t$ is weakly mixing with topological entropy $h$. Then the number of periodic orbits 
$N(t) = \# \lbrace  [\gamma]  ;   \ell(\g)  \leq r \rbrace $ grows as 

$$ ht \exp (-ht) N(t) \ra 1$$
when $t \ra \infty$.
 \end{theorem}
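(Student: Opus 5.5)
The plan is the classical route through the dynamical zeta function and a Tauberian theorem.

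\textbf{Step 1: reduction and zeta function.} First replace $f$ by a cohomologous Hölder function depending only on future coordinates, so that we work on the one-sided shift $\Sigma_A^+$. This changes neither the periods $\ell(\tau)=f^n(x)$ of closed orbits $\tau$ ($\sigma^n x=x$) nor the entropy $h$. Then introduce
$$\zeta(s)=\prod_{\tau}\bigl(1-e^{-s\ell(\tau)}\bigr)^{-1}=\exp\sum_{n\geq 1}\frac1n\sum_{\sigma^n x=x}e^{-sf^n(x)},$$
where the product runs over prime periodic orbits $\tau$ of $\sigma^f_t$. This converges absolutely for $\mathrm{Re}\, s>h$, since $h$ is the unique zero of the decreasing map $s\mapsto P(-sf)$ (Abramov).

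\textbf{Step 2: spectral analysis of transfer operators.} Study the Ruelle operators $L_{-sf}$ acting on Hölder functions. For real $s$, Ruelle--Perron--Frobenius gives a simple isolated maximal eigenvalue $e^{P(-sf)}$, and the rest of the spectrum lies in a smaller disc. Perturbation theory then yields an analytic eigenvalue $\beta(s)$ near $s=h$. Comparing $\sum_{\sigma^nx=x}e^{-sf^n(x)}$ with $\mathrm{tr}$-type sums of $L_{-sf}^n$ shows three things. First, $\zeta$ extends meromorphically to a neighbourhood of $\{\mathrm{Re}\, s\geq h\}$. Second, near $s=h$ we have $\zeta(s)=(1-\beta(s))^{-1}\cdot(\text{analytic nonvanishing})$. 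Third, since $\beta'(h)=-\int f\,dm\neq 0$ for the equilibrium state $m$ of $-hf$, the point $s=h$ is a simple pole.

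\textbf{Step 3 (main obstacle): no other poles on $\mathrm{Re}\, s=h$.} Suppose $s=h+it$ with $t\neq 0$ were a pole. Then $L_{-(h+it)f}$ would have an eigenvalue of modulus $1$. The standard argument (Pollicott) shows this forces $e^{-itf}=e^{i a}\,w\circ\sigma/w$ for some continuous $w$ with $|w|=1$ and some constant $a$. In other words, $tf$ is cohomologous to a function with values in $a+2\pi\mathbb{Z}$. That would make $e^{itf}$ an eigenfunction for the suspension flow, contradicting weak mixing. I expect this step to require the most care, namely the equality case in the triangle inequality for $L$ together with the Hölder regularity of $w$.

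\textbf{Step 4: Tauberian conclusion.} Write
$$-\frac{\zeta'}{\zeta}(s)=\int e^{-st}\,d\psi(t),\qquad \psi(t)=\sum_{k\ell(\tau)\leq t}\ell(\tau).$$
This function is analytic on $\mathrm{Re}\, s\geq h$ except for a simple pole at $h$ with residue $1$. The Ikehara--Wiener theorem gives $\psi(t)\sim e^{ht}/h$. Non-primitive orbits contribute $O(te^{ht/2})$, so the prime sum $\psi_0(t)=\sum_{\ell(\tau)\leq t}\ell(\tau)$ satisfies the same asymptotic. Partial summation then yields $N(t)\sim e^{ht}/(ht)$, that is $ht\,e^{-ht}N(t)\to 1$.
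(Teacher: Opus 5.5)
\textbf{Comparison with the paper and the error in Step 3.} The paper gives no proof of this statement; it only cites Parry--Pollicott. Your outline (zeta function, transfer operators $L_{-sf}$, no poles on $\mathrm{Re}\, s=h$ other than $s=h$, Ikehara--Wiener, partial summation) is their argument, and Steps 1, 2 and 4 are essentially right. Step 3, however, contains a false inference as written. From a pole at $h+it$ you extract only an eigenvalue of modulus $1$, obtain $e^{-itf}=e^{ia}\,w\circ\sigma/w$, and claim this contradicts weak mixing. For $e^{ia}\neq 1$ it does not. On the full shift on $\{0,1\}$, let $f(x)=1$ if $x_0=0$, $f(x)=1+\sqrt2$ if $x_0=1$, and $t=\sqrt2\,\pi$. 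Then $tf\equiv\sqrt2\,\pi \pmod{2\pi}$, so $L_{-(h+it)f}=e^{-i\sqrt2\pi}L_{-hf}$ has the eigenvalue $e^{-i\sqrt2\pi}$ of modulus one. Yet the suspension flow is weakly mixing: an eigenfunction of frequency $\beta$ makes $e^{i\beta f}$ a multiplicative coboundary, and the fixed points $0^\infty$, $1^\infty$ force $\beta\in 2\pi\mathbb{Z}$ and $\beta(1+\sqrt2)\in 2\pi\mathbb{Z}$, hence $\beta=0$. Consistently, $\zeta(h+i\sqrt2\,\pi)=(1-e^{-i\sqrt2\pi})^{-1}$ is finite.

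The fix is to use that a pole forces the eigenvalue to be \emph{exactly} $1$. Your local form $\zeta(s)=(1-\beta(s))^{-1}\exp(\text{analytic})$ holds near every point of the line that carries a (necessarily simple) peripheral eigenvalue $\beta(s)$, not only near $h$. Away from such points, $\zeta$ is $\exp(\text{analytic})$. So a pole means $\beta(h+it)=1$. The equality case for the normalized operator then gives $e^{-itf}=w\circ\sigma/w$ with no constant. Hence $\Phi(x,u)=e^{-itu}w(x)$, for $0\le u\le f(x)$, is a well-defined nonconstant continuous eigenfunction of $\sigma^f_t$, and that is the contradiction. Note that $e^{itf}$ itself is not a function on the suspension space. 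The same local forms also show that $\zeta$ has no zeros on $\mathrm{Re}\, s=h$. You need this so that $-\zeta'/\zeta-(s-h)^{-1}$ extends continuously to the closed half-plane, as Ikehara--Wiener requires.

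\textbf{Irreducible versus aperiodic.} The statement assumes only that $A$ is irreducible, whereas your Ruelle--Perron--Frobenius claim (rest of the spectrum in a smaller disc) requires aperiodicity. If $A$ has period $p>1$, then $L_{-hf}$ has the $p$ simple peripheral eigenvalues $e^{2\pi i j/p}$, and near $h$ one gets $\zeta(s)=(1-\beta(s)^p)^{-1}\exp(\text{analytic})$. More simply, replace $(\Sigma_A,\sigma,f)$ by $\sigma^p$ restricted to one cyclic class, with roof $f+f\circ\sigma+\cdots+f\circ\sigma^{p-1}$. This is aperiodic and has the same suspension flow, so you can run your argument there.
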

%
%


\vskip .1 in
\noindent     Patrick Foulon\\ Aix-Marseille Universit\'e, CNRS,\\
Institut de Mathématiques de Marseille (UMR 7373) \\
Site Sud, Campus de Luminy \\
13288 MARSEILLE Cedex 9\\
Marseille, France.\\
\texttt{patrick.foulon\char`\@univ-amu.fr}\\
\vskip .005 in
\noindent     Inkang Kim\\
     School of Mathematics\\
     KIAS, Hoegiro 85, Dongdaemen-gu\\
     Seoul, 02455, Korea\\
     \texttt{inkang\char`\@ kias.re.kr}

\end{document}